\NewDocumentEnvironment{eqs}{+b}
    {\begin{equation}\begin{split}#1\end{split}\end{equation}}
    {}
\numberwithin{equation}{section}
\newtheorem{theorem}{Theorem}[section]
\newtheorem{lemma}[theorem]{Lemma}
\newtheorem{proposition}[theorem]{Proposition}
\newtheorem{remark}[theorem]{Remark}
\newtheorem{defi}[theorem]{Definition}
\newcounter{thmc}
\theoremstyle{definition}
\let\oldtocsection=\tocsection
\let\oldtocsubsection=\tocsubsection
\let\oldtocsubsubsection=\tocsubsubsection
\renewcommand{\tocsection}[2]{\hspace{0em}\oldtocsection{#1}{#2}}
\renewcommand{\tocsubsection}[2]{\hspace{1em}\oldtocsubsection{#1}{#2}}
\renewcommand{\tocsubsubsection}[2]{\hspace{2em}\oldtocsubsubsection{#1}{#2}}
\renewcommand{\tilde}{\widetilde}          
\DeclareMathSymbol{\leqslant}{\mathalpha}{AMSa}{"36} 
\DeclareMathSymbol{\geqslant}{\mathalpha}{AMSa}{"3E} 
\DeclareMathSymbol{\eset}{\mathalpha}{AMSb}{"3F}     
\renewcommand{\leq}{\;\leqslant\;}                   
\renewcommand{\geq}{\;\geqslant\;}                   
\newcommand{\C}{\mathbb{C}}
\newcommand{\R}{\mathbb{R}}
\newcommand{\D}{\mathbb{D}} 
\newcommand{\Heps}{\mathbb{H}_{\delta,\eps}}
\newcommand{\Reps}{\mathbb{R}_{\eps}} 
\renewcommand{\H}{\mathbb{H}}
\newcommand{\E}{\mathds{E}}
\newcommand{\X}{\bm{\mathrm X}}
\newcommand{\V}{\bm{\mathrm V}} 
\newcommand{\Desc}{\bm{\mathrm D}}
\newcommand{\ps}[1]{\langle #1 \rangle}
\newcommand{\mc}[1]{\mathcal{#1}}
\def\sl{\mathfrak{sl}}
\newcommand{\ostar}{\mathbin{\mathpalette\make@circled\star}}
\newcommand{\make@circled}[2]{%
  \ooalign{$\m@th#1\smallbigcirc{#1}$\cr\hidewidth$\m@th#1#2$\hidewidth\cr}%
}
\newcommand{\smallbigcirc}[1]{%
  \vcenter{\hbox{\scalebox{0.77778}{$\m@th#1\bigcirc$}}}%
}
\def\Is{I_{sing} }
\def\Ir{I_{reg} }
\def\It{I_{tot} }
\newcommand{\qt}[1]{\quad\text{#1}\quad}
\def\weyl{\bm{\mathrm{\rho}}}
\def\V{\bm{\mathrm V}}
\def\Wb{\bm{\mathrm W}}
\def\SET{\bm{\mathrm T}}
\def\Wc{\bm{\mathcal W}}
\def\Lc{\bm{\mathcal L}}
\def\X{\bm{\mathrm  X}}
\def\L{\bm{\mathrm L}}
\def\eps{\varepsilon}
\def\g{\mathfrak{g}}
\def\wc{\mathcal C_-}
\def\eps{\varepsilon}
\def\bi{\begin{itemize}}
	\def\ei{\end{itemize}}
\def\bnum{\begin{enumerate}}
	\def\enum{\end{enumerate}}
\def\<#1{\langle #1 \rangle}
\newcommand{\norm}[1]{\left\lvert#1\right\rvert}
\newcommand{\expect}[1]{\mathbb{E}\left[#1\right]}
\title[Higher-spin symmetry in the $\sl_3$ boundary Toda CFT I]{Higher-spin symmetry in the $\sl_3$ boundary Toda conformal field theory I: Ward identities}
\author{Baptiste Cercl\'e}
\email{baptiste.cercle@epfl.ch}
\address{EPFL SB MATH RGM, MA B2 397, Station 8, CH-1015 Lausanne, Switzerland.}
\author{Nathan Huguenin}
\email{nathan.huguenin@univ-amu.fr}
\address{Aix-Marseille Université, CNRS, Institut de Mathématiques de Marseille (I2M) – UMR 7373, Site de Saint Charles, 3 place Victor Hugo, Case 19, 13331 Marseille cédex 3, France.}
\begin{document}

	\maketitle
	\begin{abstract}
		This article is the first of a two-part series dedicated to studying the symmetries enjoyed by the probabilistic construction of the $\sl_3$ boundary Toda Conformal Field Theory. Namely in the present document we show that this model enjoys higher-spin symmetry in the form of Ward identities, both local and global. 

        To do so we consider the $\sl_3$ Toda theory on the upper-half plane and rigorously define the descendant fields associated to the Vertex Operators. We then show that we can express local as well as global Ward identities based on them, for both the stress-energy tensor and the higher-spin current that encodes this enhanced level of symmetry. This answers a question raised in the physics literature as to whether Toda theory still enjoys higher-spin symmetry in the boundary case. 

        The second part of this series will be dedicated to computing the singular vectors of the theory and showing that they give rise to higher equations of motion as well as, under additional assumptions, BPZ-type differential equations for the correlation functions.
	\end{abstract}

    \renewcommand{\baselinestretch}{0.75}\normalsize
    \tableofcontents
    \renewcommand{\baselinestretch}{1.0}\normalsize

    \section{Introduction}

\subsection{Higher-spin symmetry for boundary Toda CFT} 

\subsubsection{Two-dimensional conformal field theory} Two-dimensional conformal field theories (CFT hereafter) are celebrated instances of physics models in which the symmetries are constraining enough to solve the theory. They appear as fundamental objects in many topics such as string theory, 2d quantum gravity and 2d statistical physics at criticality when a second-order (at least) phase transition is observed. The conformal bootstrap method developed by Belavin-Polyakov-Zamolodchikov~\cite{BPZ} (BPZ in the sequel) arises in this setting as a general procedure that should allow one to recursively compute the correlation functions of a CFT based on the knowledge of basic correlation functions called the \emph{structure constants}, together with the so-called \textit{spectrum} of the theory as well as some universal special functions named \emph{conformal blocks}. To this end, the general philosophy for computing these correlation functions is to exploit the symmetries of the model, which can be achieved \textit{e.g.} by studying the representation theory of the Virasoro algebra (the algebra of symmetry of 2d CFTs), and which in turn translates as actual constraints put on the correlation functions. Such constraints can manifest themselves through conformal \emph{Ward identities}, and under additional assumptions made on the representation considered they give rise to \emph{BPZ differential equations}. This approach has been successfully worked out (at the physics level of rigor) by BPZ~\cite{BPZ} to address the case of \textit{minimal models}.

In this setting, Liouville theory is a canonical model for which the conformal bootstrap procedure can be implemented. Initially introduced by Polyakov~\cite{Pol81} as a model for two-dimensional random geometry, it has since become a fundamental object and appears in various topics, ranging from gauge theories via the AGT correspondence~\cite{AGT} to Wess-Zumino-Witten models. 
The first step in the conformal bootstrap procedure, that is the computation of the structure constants of Liouville theory, has been conducted in the physics literature in~\cite{DO94, ZZ96} for the closed case, and later in~\cite{FZZ, PT02, Hos} for the boundary case. This led to the implementation of the conformal bootstrap procedure: for more details on the physical formulation of the conformal bootstrap in Liouville theory we refer for instance to~\cite{Teschner_revisited} and to the review~\cite{Nakayama}.

\subsubsection{Toda conformal field theories} 
Beyond conformal invariance, it is well-known in statistical physics that many models admit an enhanced level of symmetry: this is the case for example for the three-states Potts model at criticality. In order to generalize the BPZ method to these theories, Zamolodchikov introduced in~\cite{Za85} the notion of \emph{$W$-algebra}, that are vertex operator algebras that encode the \emph{higher-spin} symmetry of the model, and that strictly contain the Virasoro algebra as a sub-algebra. Toda conformal field theories are natural generalizations of Liouville theory that exhibit this higher-spin or $W$-symmetry: these CFTs depend on a simple and complex Lie algebra $\mathfrak{g}$ and their algebras of symmetry are given by the simple $W$-algebras $\mc W^k(\mathfrak{g})$. Liouville theory is then recovered within this framework as the Toda theory associated with $\mathfrak{sl}_2$. 

It is worth noting at this point that unlike the Virasoro algebra, $W$-algebras are \textit{not} Lie algebras and the formalism of Vertex Operator Algebra is thus necessary to make sense of this notion. As such, the study of the symmetries enjoyed by Toda CFTs is more involved than its counterpart in Liouville theory and relies on additional tools related, among other things, to the representation theory of $W$-algebras. In the case of the $\g=\sl_3$ boundary Toda theory that we consider here, in order to make explicit the symmetries enjoyed by the model, one needs to work out Ward identities associated not only with conformal invariance but also with higher-spin symmetry. Actually, it was even far from clear in the physics literature that the model that we consider here has the desired symmetries (in the sense of the Ward identities): one achievement of the present document is to address this issue by showing that it is actually the case. Moreover, a necessary step in the comprehension of the model is to find the explicit form of some singular vectors of the theory, which were also unknown up to now. But let us first describe Toda CFTs in more details.

In certain cases these models admit a path integral formulation, that reads
\begin{equation*}
    \ps{F[\Phi]} \coloneqq \frac{1}{\mathcal{Z}} \int_{\mathcal{F}} F[\phi] e^{-S_T(\phi)} D\phi.
\end{equation*}
where $\mathcal{F}$ is a space of maps defined on $\Sigma$ and taking values in $\mathfrak{a}\simeq \R^r$, that is the real part of the Cartan sub-algebra of $\mathfrak{g}$, and where the action $S_T$ is given by
\begin{equation*}
    \begin{split}
    S_T(\phi) :=& \frac{1}{4\pi} \int_\Sigma \left( |d_g\phi|^2+R_g\ps{Q,\phi} + 4\pi\sum_{i=1}^r\mu_{B,i} e^{\ps{\gamma e_i,\phi}}\right)dv_g \\
    &+ \frac{1}{2\pi} \int_{\partial\Sigma} \left( k_g\ps{Q,\phi} + 2\pi \sum_{i=1}^r\mu_i e^{\ps{\frac{\gamma}{2}e_i,\phi}}\right)d\lambda_g.
    \end{split}
\end{equation*}
We stress that we consider here the general case where the boundary is non-empty, so that the term on the second line is indeed present in the definition of the model. This action functional is based on several quantities, arising from different perspectives:
\begin{itemize}
    \item from the Lie algebra perspective, the $e_i$'s are the simple roots of the Lie algebra $\mathfrak{g}$, and the scalar product $\ps{\cdot,\cdot}$ on $\mathfrak{a}$ is inherited from the Killing form;
    \item the geometric quantities are $g$ a Riemannian metric on $\Sigma$, with $R_g$ and $k_g$ being respectively the Ricci scalar and geodesic curvature associated with $g$;
    \item the \emph{coupling constant} $\gamma$ is a real number in $(0,\sqrt{2})$~\footnote{Due to the normalization of the simple roots $|e_i|^2=2$, the range of values for $\gamma$ differs from the one in Liouville theory by a factor $\sqrt{2}$. The so-called $L^2$-phase in the $\mathfrak{sl}_3$ Toda theory is thus the interval $(0,1)$.}, and the \emph{background charge} is given by $Q := \left(\gamma+\frac{2}{\gamma}\right)\boldsymbol{\rho}$, where $\boldsymbol{\rho}$ is the Weyl vector;
    \item eventually the action depends on \emph{cosmological constants} $\mu_{B,i} \ge 0$ for $i=1,2$ and $\mu_i$ that are piecewise constant, complex-valued functions over $\partial\Sigma$. 
\end{itemize}

Among all observables $F[\Phi]$, some are of particular interest and are called \emph{Vertex Operators}. They depend on a point $z$ in $\Sigma \cup \partial\Sigma$ and a weight $\alpha\in \R^r$ and are formally defined by
\begin{equation*}
    V_\alpha(z)[\Phi] \coloneqq e^{\ps{\alpha,\Phi(z)}}.
\end{equation*}
The correlation functions of Vertex Operators are then defined by considering distinct bulk insertions $(z_1,\cdots,z_N) \in \H^N$ and distinct boundary insertions $(s_1,\cdots,s_M)\in\R^M$ (here $\R$ is viewed as the boundary of $\H$) with respective weights $(\alpha_1,\cdots,\alpha_N)$ and $(\beta_1,\cdots,\beta_M)$. They formally take the form
\begin{equation*}
    \ps{\prod_{k=1}^NV_{\alpha_k}(z_k)\prod_{l=1}^MV_{\beta_l}(s_l)}\coloneqq \frac{1}{\mc Z}\int_{\mc F}\prod_{k=1}^Ne^{\ps{\alpha_k,\Phi(z_k)}}\prod_{l=1}^Me^{\ps{\beta_l,\Phi(s_l)}}.
\end{equation*}
Computing such correlation functions is one of the main goals in the study of Toda CFTs. The first step in this perspective is the derivation of the structure constants of the theory, in which case a formula for the structure constants on the sphere has been proposed by Fateev-Litvinov~\cite{FaLi1} when $\mathfrak{g}=\mathfrak{sl}_n$. For the theory with a boundary, the structure constants are not known except for the bulk one point function that has been computed in~\cite{FaRi} for a certain type of boundary conditions, see also~\cite{Fre11} where other type of boundary conditions are considered and where the link with $W_n$ minimal models is investigated. And beyond the structure constants, it is not clear in which form the recursive procedure at the heart of the conformal bootstrap method would hold.

In order to derive such expressions, one relies on the symmetries of the theory which are encoded by the $W_3$ algebra. In the model, this $W_3$ algebra manifests itself via the existence of two holomorphic currents, the stress-energy tensor $\SET$ associated to conformal invariance and the higher-spin current $\Wb$ that encodes the higher level of symmetry.  At the (vertex operator) algebraic level, these currents admit the formal Laurent series expansion
\begin{equation*}
\SET(z)=\sum_{n\in\mathbb Z}z^{-n-2}\L_{n}\qt{and}\Wb(z) =\sum_{n\in\mathbb{Z}}z^{-n-3}\Wb_n
\end{equation*}
where the modes $(\L_n,\Wb_m)_{n,m\in\mathbb Z}$ satisfy the commutation relations of the $W_3$ algebra. One key property of these currents is their \emph{Operator Product Expansions} with Vertex Operators, which is assumed to take the form:
\begin{equation}\label{eq:WVOPEinformal}
    \bm {\mathrm W}(z_0)V_\alpha(z)=\frac{w(\alpha)V_\alpha(z)}{(z_0-z)^3}
    +\frac{\bm {\mathrm W}_{-1}V_\alpha(z)}{(z_0-z)^2}+\frac{\bm {\mathrm W}_{-2}V_\alpha(z)}{z_0-z}+\text{reg.}
\end{equation}
where the $\bm{\mathrm{W}_{-i}}V_\alpha(z)$ are the \emph{descendant fields} associated to $V_\alpha$ while $w(\alpha)\in\C$ is called the \emph{quantum number} associated to $\Wb$. When inserted within correlation functions this Operator Product Expansion becomes a \textit{local Ward identity}.

\subsubsection{Probabilistic approaches to conformal field theory} The physics picture described above has been for many years lacking rigorous mathematical foundation. However the probabilistic framework developed by David-Guillarmou-Kupiainen-Rhodes-Vargas~\cite{DKRV} for Liouville theory have led to major progress in the mathematical understanding of two-dimensional conformal field theory. Firstly, in the closed setting (that is, without boundary), the DOZZ formula for the sphere structure constant has been rigorously derived in \cite{KRV_DOZZ}, and the conformal boostrap has been implemented in \cite{GKRV}, while the Segal's axioms~\cite{Seg04} were proven to be true in \cite{GKRV_Segal}. In the boundary case, the structure constants of Liouville theory were obtained in the case where $\mu_B=0$ by Remy~\cite{remy1} and Remy-Zhu~\cite{remy2} using the BPZ equations. In the case where $\mu_B> 0$, the mating-of-trees machinery introduced in \cite{DMS14} has led to the derivation of BPZ equations and to the computation of all structure constants in \cite{ARS, ARSZ}. Recently, the first author has provided a framework allowing to obtain these BPZ equations avoiding the use of the mating-of-trees techniques \cite{Cer_HEM}, paving the way for a derivation of such differential equations for the case of Toda CFTs. 

Indeed, the probabilistic construction unveiled in~\cite{DKRV} admits a generalization to Toda theories on the Riemann sphere as defined in~\cite{Toda_construction}. In the case of the Lie algebra $\mathfrak{sl}_3$, which we will consider in this document, the symmetries of the probabilistic model thus defined have been unveiled in~\cite{Toda_OPEWV} where Ward identities were proven to hold, leading to the computation of a family of three-point structure constants in~\cite{Toda_correl1, Toda_correl2}. One goal of the present paper is to conduct the same analysis in the case where the surface under consideration admits a boundary, which is both conceptually and technically more demanding that the closed case. For this we rely on the general construction of Toda theories on surfaces with or without boundary performed in~\cite{CH_construction} and show that the model constructed there enjoys $W$-symmetry as well. More precisely, a first outcome of the present paper is to show that the \emph{boundary} local Ward identities are valid~\footnote{We also show that the bulk Ward identities are true as well, but the proof of this result is contained in the one for the boundary case and is much simpler.} for the theory on the complex upper half-plane (the boundary thus being the real line) and associated with the Lie algebra $\mathfrak{sl}_3$. We stress that it was far from clear in the physics literature that such Ward identities would hold in the boundary case.
\begin{theorem}\label{thm:ward_intro}
    Let $t\in \R$. For any $n\ge 2$, in the sense of weak derivatives, the conformal Ward identity holds:
    \begin{equation*}
        \begin{split}
            &\ps{\L_{-n}V_\beta(t)\prod_{k=1}^NV_{\alpha_k}(z_k)\prod_{l=1}^MV_{\beta_l}(s_l)}\\
            &=\left(\sum_{k=1}^{2N+M}\frac{-\partial_{z_k}}{(z_k-t)^{n-1}}+\frac{(n-1)\Delta_{\alpha_k}}{(z_k-t)^n}\right) \ps{V_\beta(t)\prod_{k=1}^NV_{\alpha_k}(z_k)\prod_{l=1}^MV_{\beta_l}(s_l)}.
        \end{split}
    \end{equation*}
    Moreover, for any $n\geq3$, the higher-spin Ward identity is valid:
    \begin{equation*} \label{eq:ward_Wn_intro}
        \begin{split}
            &\ps{\Wb_{-n}V_\beta(t)\prod_{k=1}^NV_{\alpha_k}(z_k)\prod_{l=1}^MV_{\beta_l}(s_l)}= \\
              &\left(\sum_{k=1}^{2N+M}\frac{-\Wc_{-2}^{(k)}}{(z_k-t)^{n-2}}+\frac{(n-2)\Wc_{-1}^{(k)}}{(z_k-t)^{n-1}}-\frac{(n-1)(n-2)w(\alpha_k)}{2(z_k-t)^n}\right) \ps{V_\beta(t)\V}
        \end{split}
    \end{equation*}
    where for $j=1,2$, $\Wc_{-j}^{(k)}\ps{V_\beta(t)\V}\coloneqq \ps{\Wb_{-j}V_{\alpha_k}(z_k)\prod_{l\neq k}V_{\alpha_l}(z_l)}$, and $\Wc_{-j}^{(k)} = \overline{\Wc}_{-j}^{(k-N)}$ for $k\in\{N+1,...,2N\}$.
\end{theorem}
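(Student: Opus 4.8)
\emph{Strategy.} The plan is to deduce both families of identities from the \emph{local} Ward identities --- the operator product expansions of the currents $\SET$ and $\Wb$ against the vertex operators --- by a contour-deformation and residue argument. Recall first that the descendant fields are the Laurent modes of the currents: writing $\mathcal C_t$ for a small, positively oriented circle around the boundary insertion $t$, the definitions
\begin{align*}
\ps{\L_{-n}V_\beta(t)\V}&=\frac{1}{2\pi i}\oint_{\mathcal C_t}(z-t)^{1-n}\ps{\SET(z)V_\beta(t)\V}\,dz,\\
\ps{\Wb_{-n}V_\beta(t)\V}&=\frac{1}{2\pi i}\oint_{\mathcal C_t}(z-t)^{2-n}\ps{\Wb(z)V_\beta(t)\V}\,dz,
\end{align*}
are consistent with the expansions $\SET(z)=\sum_n z^{-n-2}\L_n$ and $\Wb(z)=\sum_n z^{-n-3}\Wb_n$.

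\emph{The crucial input and the main obstacle.} The heart of the matter is a global statement: inside a correlation function, and after extending the holomorphic currents from $\H$ to the lower half-plane by Schwarz reflection across the boundary $\R$, the maps $z\mapsto\ps{\SET(z)V_\beta(t)\V}$ and $z\mapsto\ps{\Wb(z)V_\beta(t)\V}$ are meromorphic on $\C$, holomorphic away from the insertions, with singularities only at $t$, at the boundary points $s_l$, and at each bulk point $z_k$ together with its mirror image $\overline{z_k}$. This is exactly what produces the index set $\{1,\dots,2N+M\}$ (the $N$ bulk insertions being doubled) and the rule $\Wc_{-j}^{(k)}=\overline{\Wc}_{-j}^{(k-N)}$, the reflected descendant being the complex conjugate. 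The pole structure at each singularity is the one dictated by the OPE: a double pole for $\SET$ with coefficients $\Delta_{\alpha_k}$ and $\L_{-1}V_{\alpha_k}=\partial V_{\alpha_k}$, and a triple pole for $\Wb$ with coefficients $w(\alpha_k)$, $\Wb_{-1}V_{\alpha_k}$ and $\Wb_{-2}V_{\alpha_k}$ as in~\eqref{eq:WVOPEinformal}, with no poles of higher order. Establishing this meromorphicity, the absence of higher-order poles, the decay at infinity forced by the conformal weights, and the validity of the reflection is the probabilistic core, and the step I expect to be the main obstacle. It is most delicate for the spin-three current $\Wb$, which is a renormalized cubic expression in the derivatives of the free field $\Phi$: I would need Gaussian integration by parts together with Cameron--Martin shifts, uniform moment bounds for the Gaussian multiplicative chaos measures, control of the regularization limit as $\eps\to0$, and a careful analysis of the reflection of a cubic current across $\R$ and of its fusion with the boundary vertex operators. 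The closed-surface analysis of~\cite{Toda_OPEWV} and the boundary construction of~\cite{CH_construction} provide the template and the analytic estimates for this.

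\emph{Contour deformation and residues.} Granting this global structure, the identities follow by deforming $\mathcal C_t$ outward. For $n\ge2$ (resp.\ $n\ge3$) both the power $(z-t)^{1-n}$ (resp.\ $(z-t)^{2-n}$) and the current correlator decay at infinity, so the large-circle contribution vanishes and
\begin{equation*}
\ps{\L_{-n}V_\beta(t)\V}=-\sum_{k=1}^{2N+M}\operatorname*{Res}_{z=z_k}\Big[(z-t)^{1-n}\ps{\SET(z)V_\beta(t)\V}\Big],
\end{equation*}
with the convention that for $N<k\le 2N$ the point $z_k$ is the mirror image $\overline{z_{k-N}}$ and the residue carries the conjugated descendant, and for $2N<k\le 2N+M$ it is a boundary insertion; the same holds for $\Wb$. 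Evaluating each residue against the OPE is then routine: for $\SET$ the double- and simple-pole terms give $(1-n)\Delta_{\alpha_k}(z_k-t)^{-n}$ and $(z_k-t)^{1-n}\partial_{z_k}$, producing the claimed $\tfrac{-\partial_{z_k}}{(z_k-t)^{n-1}}+\tfrac{(n-1)\Delta_{\alpha_k}}{(z_k-t)^n}$; for $\Wb$ the triple, double and simple poles give respectively $-\tfrac{(n-1)(n-2)w(\alpha_k)}{2(z_k-t)^n}$, $\tfrac{(n-2)\Wc_{-1}^{(k)}}{(z_k-t)^{n-1}}$ and $\tfrac{-\Wc_{-2}^{(k)}}{(z_k-t)^{n-2}}$, matching the stated right-hand side.

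\emph{Distributional justification.} Finally, since the currents and their descendants are a priori distributions rather than pointwise-defined functions, all of the above must be read in the sense of distributions in the variable $t$: I would test against a smooth compactly supported function, justify the interchange of contour integration and expectation by Fubini together with the moment bounds above, and interpret the derivatives $\partial_{z_k}$ on the right-hand side as weak derivatives. This is what the qualifier ``in the sense of weak derivatives'' refers to, and it is why the argument is phrased as an identity of distributions rather than of functions.
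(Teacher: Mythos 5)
The residue algebra in your final step is correct, and you rightly locate the difficulty in establishing the global analytic structure of the current correlators; but as written the proof has a genuine gap, and it is twofold. First, your starting point misidentifies what the left-hand side \emph{is}: in this paper the descendants $\L_{-n}V_\beta(t)$ and $\Wb_{-n}V_\beta(t)$ are not defined as contour modes of $\ps{\SET(z)V_\beta(t)\V}$ or $\ps{\Wb(z)V_\beta(t)\V}$. They are defined (Lemmas~\ref{lemma:desc1}, \ref{lemma:descn}, \ref{lemma:desc_wn} and Definition~\ref{defi: desc wn}) as limits of regularized Wick-polynomial insertions from which explicit, generically \emph{divergent} remainder terms $\tilde{\mathfrak{L}}^i_{-n,\delta,\eps,\rho}$, $\tilde{\mathfrak{W}}^i_{-n,\delta,\eps,\rho}$ have been subtracted --- boundary evaluations at $t\pm\eps$ weighted by $\mu_{L,i}$, $\mu_{R,i}$ together with integrals along $\R+i\delta$, produced by the boundary GMC potential colliding with $t$. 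Your contour formula contains no counterpart of these subtractions, so even granting your meromorphicity input you would still owe a proof that the mode-integral object coincides with the paper's descendant; that comparison is exactly where the boundary nature of the problem enters and is the reason the paper stresses that the boundary case is genuinely harder than the closed one treated in~\cite{Toda_OPEWV}.

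Second, the ``crucial input'' you grant yourself is essentially the theorem itself: the assertion that $z\mapsto\ps{\Wb(z)V_\beta(t)\V}$ extends by reflection to a meromorphic function whose only singularities are poles of order at most three at the insertions, with coefficients given by the descendants, is (in the case $\beta=0$) precisely Theorem~\ref{thm:ward_local}, which the paper \emph{deduces from} Theorems~\ref{thm:ward_vir} and~\ref{thm:ward_Wn}, not the other way around; the OPE~\eqref{eq:WVOPEinformal} is only an assumption at the physics level, and verifying it probabilistically is the content of the paper. The paper's actual route is entirely different: Gaussian integration by parts (Lemma~\ref{lemma:GaussianIPP}) turns the difference of the two sides of the Ward identity, at the regularized level, into explicit rational-kernel integrals of correlators with extra $V_{\gamma e_i}$ insertions over $\Heps$ and $\Reps$; algebraic identities for $B$ and $C$ together with the symmetrization identities~\eqref{eq:sym_id}, \eqref{eq: sym id 2}, \eqref{eq: sym id 3} recombine this difference into total derivatives; Stokes' formula converts these into boundary terms which either match the subtracted remainders or vanish by the fusion estimates (Lemma~\ref{lemma:fusion}) --- but the vanishing holds only for $\ps{\beta,e_i}<0$, and the general case $(\beta,\bm\alpha)\in\mc A_{N,M+1}$ is reached by analytic continuation in the weights via the $(P)$-class property. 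Your proposal omits this analytic-continuation mechanism entirely, yet it is indispensable: for generic $\beta$ the remainder terms diverge and no naive contour identity holds even after regularization. In short, your plan defers the whole analytic content to a step you acknowledge but do not carry out, and formulates that step (pointwise meromorphicity plus mode integrals) in a shape in which the paper's probabilistic objects do not directly make it provable.
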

To prove this result, we first provide a rigorous definition of the boundary descendants $\L_{-n}V_\beta$ and $\Wb_{-n}V_\beta$ in the same fashion as in~\cite{Cer_HEM}. To do so we rely on the explicit expression of the currents $\SET$ and $\Wb$ in terms of the Toda field. Defining these descendants then relies on an algorithmic method that involves rather heavy computations, but these complicated expressions are seen to simplify thanks to the special observables (that is the stress-energy tensor and the higher-spin current) that we consider.

As a corollary of the local Ward identities and the conformal covariance of the model, we also obtain global Ward identities, that provide linear relations between the descendants associated with a given correlation function:
\begin{theorem}\label{thm:ward_global_intro}
    For $0\leq n\leq 2$ and $0\leq m\leq 4$:
    \begin{equation*}
        \begin{split}
            &\left(\sum_{k=1}^{2N+M}z_k^n\Lc_{-1}^{(k)}+nz_k^{n-1}\Delta_{\alpha_k}\right)\ps{\prod_{k=1}^NV_{\alpha_k}(z_k)\prod_{l=1}^MV_{\beta_l}(s_l)}=0\\
            &\left(\sum_{k=1}^{2N+M}z_k^m\Wc_{-2}^{(k)}+mz_k^{m-1}\Wc_{-1}^{(k)}+\frac{m(m-1)}2 z_k^{m-2}w(\alpha_k)\right)\ps{\prod_{k=1}^NV_{\alpha_k}(z_k)\prod_{l=1}^MV_{\beta_l}(s_l)}=0.
        \end{split}
    \end{equation*}
\end{theorem}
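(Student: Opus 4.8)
The plan is to obtain Theorem~\ref{thm:ward_global_intro} as a corollary of the local identities of Theorem~\ref{thm:ward_intro} by analysing the behaviour at infinity of the two current one-point functions. First I would repackage the local Ward identities as the statement that
\[
z_0\longmapsto \ps{\SET(z_0)\V}\qt{and}z_0\longmapsto \ps{\Wb(z_0)\V}
\]
are rational functions of $z_0$ on all of $\C$. On $\H$ this uses the defining OPE~\eqref{eq:WVOPEinformal} and its conformal counterpart to fix the poles at the insertion points, together with the reflection across $\R$ to extend the currents to the lower half-plane; this produces exactly the $2N+M$ effective poles appearing in the theorem, namely the bulk points $z_k$, their images $\bar z_k$ (carrying the conjugated data, whence $\Lc_{-1}^{(k)}=\overline{\Lc}_{-1}^{(k-N)}$ and $\Wc_{-j}^{(k)}=\overline{\Wc}_{-j}^{(k-N)}$ as already recorded in Theorem~\ref{thm:ward_intro}), and the boundary points $s_l$. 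At each such point the principal part is prescribed: for $\SET$ a double pole with coefficient $\Delta_{\alpha_k}$ and a simple pole with coefficient $\Lc_{-1}^{(k)}$, and for $\Wb$ a triple, double and simple pole with coefficients $w(\alpha_k)$, $\Wc_{-1}^{(k)}$ and $\Wc_{-2}^{(k)}$.

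Next I would feed in conformal covariance to control the point at infinity. Since $\SET$ is a spin-$2$ and $\Wb$ a spin-$3$ current and there is no insertion at $\infty$, covariance under inversion $z_0\mapsto 1/z_0$ forces
\[
\ps{\SET(z_0)\V}=O(z_0^{-4})\qt{and}\ps{\Wb(z_0)\V}=O(z_0^{-6})
\]
as $z_0\to\infty$. The point here is that inversion is a M\"obius map, so its Schwarzian — and likewise the higher-derivative anomaly terms in the transformation of $\Wb$ — vanishes, and both currents transform as honest weight-$2$ and weight-$3$ primaries; this is precisely where the conformal covariance of the probabilistic model enters. Combined with rationality, this decay shows (by a Liouville/Mittag-Leffler argument on the sphere) that each current one-point function has no entire part and therefore equals the sum of its principal parts.

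The global identities are then just the vanishing of the first few Laurent coefficients at infinity, equivalently the residue theorem applied to $z_0^{\,n}\ps{\SET(z_0)\V}$ and $z_0^{\,m}\ps{\Wb(z_0)\V}$ on a large circle. Expanding $z_0^{\,n}=\sum_j\binom{n}{j}z_k^{\,n-j}(z_0-z_k)^j$ against the principal part at $z_k$ produces the residue $z_k^n\Lc_{-1}^{(k)}+nz_k^{n-1}\Delta_{\alpha_k}$, and similarly $z_k^m\Wc_{-2}^{(k)}+mz_k^{m-1}\Wc_{-1}^{(k)}+\tfrac{m(m-1)}2 z_k^{m-2}w(\alpha_k)$ from the three-term principal part of $\Wb$; the binomial coefficients $\binom{n}{0},\binom{n}{1}$ and $\binom{m}{0},\binom{m}{1},\binom{m}{2}$ are exactly the coefficients displayed in the statement. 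The sum over the $2N+M$ finite poles equals minus the residue at infinity, and the decay rates $z_0^{-4}$ and $z_0^{-6}$ make that residue vanish precisely for $n\le 2$ and $m\le 4$; for larger $n,m$ the point at infinity contributes and the identity ceases to hold, which is why the ranges are exactly $0\le n\le 2$ and $0\le m\le 4$.

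I expect the only genuine difficulty to be the rigorous justification of the two inputs in the probabilistic boundary setting, rather than the algebra above. The rational structure and the image-charge identification are already secured in proving Theorem~\ref{thm:ward_intro}, so the hard part will be establishing the exact decay orders $O(z_0^{-4})$ and $O(z_0^{-6})$ of the current correlators directly from the covariance of the regularised probabilistic correlation functions, and justifying that the large-circle limit (equivalently the reading-off of the Laurent coefficients at infinity) may be performed under the expectation. Once the decay is in hand, setting the finitely many non-decaying coefficients to zero yields the three conformal and five higher-spin relations of the theorem.
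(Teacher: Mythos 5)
Your proposal is correct and follows essentially the same route as the paper: the paper likewise combines the local Ward identities for the currents $\SET=\L_{-2}V_0$ and $\Wb=\Wb_{-3}V_0$ (Theorem~\ref{thm:ward_local}) with the covariance of the current insertions under M\"obius maps (Proposition~\ref{prop:cov_tensor}, applied with $\psi(z)=-1/z$) to force the decay $t^{-4}$ and $t^{-6}$ at infinity, and then reads off the global identities from the vanishing of the first three (resp.\ five) coefficients in the expansion in negative powers of $t$. Your residue-theorem phrasing on a large circle is just an equivalent repackaging of that Laurent-coefficient extraction, and you correctly locate the genuine analytic work in establishing the covariance of the regularized correlators (including the remainder terms), which is exactly what Proposition~\ref{prop:cov_tensor} supplies.
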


\subsection{Singular vectors and towards integrability}
In the companion paper \cite{CH_sym2}, we show the existence of \emph{singular vectors} in the theory, that under certain conditions give rise to \emph{null vectors}. The two types of constraints thus obtained, coming either from the Ward identities or from singular vectors, provide a lot of information about the model. Indeed, by combining these two properties of the theory we are able to derive BPZ-type differential equations for some correlation functions that contain these degenerate fields, but more generally we obtain higher equations of motion previously unknown in the physics literature. 

\subsubsection{From Ward identities to differential equations}
From the local Ward identities as stated in Theorem~\ref{thm:ward_intro} we know that we can express the insertion of a field $\Wb_{-3}V_{\beta}$ within a correlation function in terms of the descendants at order $2$ associated with the other insertions. On the other hand if $\beta=\beta^{\ostar}$ is chosen at a particular value (called \emph{fully-degenerate weight}) we know from \cite{CH_sym2} that it is possible to define singular vectors out of the descendant fields associated to $V_{\beta^{\ostar}}$. Under additional assumptions (on the cosmological constants) these singular vectors are actually null vectors, meaning that the $\Wb_{-3}V_{\beta^{\ostar}}$ descendant is actually equal to a linear combination of Virasoro descendants. This entails using the global Ward identities from Theorem~\ref{thm:ward_global_intro} that we are able to express, under suitable conditions put on the other insertions such as taking additional semi-degenerate fields, both sides of the Ward identities associated to $\Wb_{-3}$ in terms of Virasoro descendants. From this we show in~\cite{CH_sym2} that a family of correlation functions are solutions of BPZ-type differential equations.

\subsubsection{Towards integrability for the boundary $\sl_3$ Toda CFT}
The existence of differential equations satisfied by the above correlation functions paves the way for a rigorous derivation of a family of structure constants for the boundary $\sl_3$ Toda CFT. To be more specific we expect to be able to provide explicit expressions for the correlation functions of the form $\ps{V_\alpha(i)V_{\beta^*}(0)}$ (corresponding to the bulk-boundary correlator) as well as correlation functions of the form $\ps{V_{\beta_1}(0)V_{\beta^*}(1)V_{\beta_2}(\infty)}$. But before being able to derive exact formulas for such quantities a first step is the derivation of the boundary reflection coefficients. Indeed and in the same fashion as in~\cite{Toda_correl2}, in order for the correlation functions involved to be well-defined one first needs to extend the range of validity for the weights under which the correlation functions make sense. Along the same way as in~\cite{Toda_correl2} this would involve the consideration of such reflection coefficients, and the proof that they describe the joint tail expansion of both bulk and boundary correlated GMC measures.

Finally let us mention that none of these formulas is actually known in the physics literature, apart from the one-point function disclosed in~\cite{FaRi} and the conjectures stated in~\cite{Fre11} on the basis of computations on the minimal models side. In this respect being able to obtain exact formulas for some structure constants for boundary Toda CFT would be a major achievement.

\textit{\textbf{Acknowledgements:}}	
The authors would like to thank Rémi Rhodes for his interest and support during the preparation of the present manuscript.

B.C. has been supported by Eccellenza grant 194648 of the Swiss National Science Foundation and is a member of NCCR SwissMAP. B.C. would like to thank Université d'Aix-Marseille for their hospitality (despite the bad weather) and N.H. is grateful to the \'Ecole Polytechnique Fédérale de Lausanne where part of this work has been undertaken.
	
	\section{Description of the method and some technical estimates}

    \subsection{The general framework}
    Before defining the correlation functions of Toda CFT on the upper-half plane we need to introduce the objects that we will work with in the sequel.
    
    \subsubsection{Gaussian Free Fields} \label{subsubsec: gff}
    The probabilistic definition of boundary Toda CFT as performed in \cite{CH_construction} is based on the consideration of a vectorial Gaussian Free Field $\X:\overline\H\to\R^2$. This GFF is a centered Gaussian random distribution, and as such is often defined through its covariance kernel, given for any $u,v\in\R^2$ and $x,y$ in $\overline\H$ by
	\begin{equation*}
		\begin{split}&\expect{\ps{u,\X(x)}\ps{v,\X(y)}}=\ps{u,v}G(x,y),\quad\text{with }\\
        &G(x,y)\coloneqq\ln\frac{1}{\norm{x-y}\norm{x-\bar y}}+2\ln\norm{x}_++2\ln\norm{y}_+
        \end{split}
	\end{equation*} 
	and where we have used the notation $\norm{x}_+\coloneqq\max(\norm{x},1)$. The Green function $G$ can be alternatively written as 
    \begin{equation*}
        G(x,y) = G_{\hat{\C}}(x,y)+G_{\hat{\C}}(x,\bar{y}),\qt{with}G_{\hat{\C}}(x,y)=\ln\frac{1}{\norm{x-y}}+\ln\norm{x}_++\ln\norm{y}_+.
    \end{equation*}
    Here $G_{\hat{\C}}$ is the Green function on the Riemann sphere: in particular the GFF $\X$ has \textit{Neumann} boundary conditions. The object thus defined is not a function but rather an element of the Sobolev space with negative index $H^{-1}(\H)$. We can however construct a regular function out of it by considering a smooth mollifier $\eta$ and defining for any $\rho>0$:
	\begin{equation}\label{eq:regularized}
		\X_\rho(x)\coloneqq\int_\H\X_\rho(y)\eta_\rho(x-y)dyd\bar y
	\end{equation}
	where we have set $\eta_\rho(\cdot)\coloneqq\frac1{\rho^2}\eta(\frac{\cdot}{\rho})$. 

    \subsubsection{Gaussian Multiplicative Chaos}
	The exponential of the above GFF $\X$ also enters in the definition of Toda correlation functions. In the probabilistic setting this is achieved thanks to the theory of \textit{Gaussian Multiplicative Chaos}, which  allows to make sense of the following random measures on either $\H$ or its boundary $\R$:
	\begin{equation*}
		M_{\gamma e_i}(d^2x)\coloneqq\lim\limits_{\rho\to0}\rho^{\gamma^2}e^{\ps{\gamma e_i,\X_\rho(x)}}dxd\bar x;\quad M^\partial_{\gamma e_i}(dx)\coloneqq\lim\limits_{\rho\to0}\rho^{\frac{\gamma^2}{2}}e^{\ps{\frac\gamma2 e_i,\X_\rho(x)}}dx
	\end{equation*}
	where these limits hold in probability, in the sense of weak convergence of measures~\cite{Ber,RV_GMC}. In the above the $(e_i)_{i=1,2}$ are elements of $\R^2$ such that $\norm{e_i}^2=2$, and as such we need to make the assumption that $\gamma<\sqrt 2$ in order for these random measures to be well-defined.

    \subsubsection{On the Lie algebra $\mathfrak{sl}_3$}
	The elements $(e_1,e_2)$ that we just considered actually define a special basis of $\R^2$. These naturally arise in the consideration of the $\mathfrak{sl}_3$ Lie algebra where they correspond to so-called \emph{simple roots}. They satisfy the property that 
    \begin{equation*}
		\left(\ps{e_i,e_j}\right)_{i,j}\coloneqq A=\begin{pmatrix}
			2 & -1\\
			-1 & 2
		\end{pmatrix}
	\end{equation*}
    with $A$ the Cartan matrix of $\mathfrak{sl}_3$. We will also consider the dual basis of $(e_1,e_2)$ which we denote by $(\omega_1,\omega_2)$. Explicit computations show that we have
	\begin{equation*}
		\omega_1=\frac{2e_1+e_2}3\quad\text{and}\quad\omega_2=\frac{e_1+2e_2}{3}\cdot
	\end{equation*}
	In the case of $\mathfrak{sl}_3$, the Weyl vector is given by
	\begin{equation*}
		\weyl= \omega_1+\omega_2=e_1+e_2,\qt{so that}\ps{\weyl,e_i}=1\qt{for}i=1,2.
    \end{equation*}
    In this setting the background charge $Q$ can be expressed in terms of the Weyl vector and the coupling constant $\gamma\in(0,\sqrt2)$ as
    \begin{equation*}
        Q=\left(\gamma+\frac2\gamma\right)\weyl.
    \end{equation*}
	We will also consider the fundamental weights in the first fundamental representation $\pi_1$ of $\mathfrak{sl}_3$ with the highest weight $\omega_1$:
	\begin{equation}\label{eq:definition_hi}
		h_1\coloneqq \frac{2e_1+e_2}{3},\quad h_2\coloneqq \frac{-e_1+e_2}{3}, \quad h_3\coloneqq  -\frac{e_1+2e_2}{3}\cdot
	\end{equation}
	Finally we introduce the following notations that will enter the definitions of the higher-spin current:
	\begin{equation*}
		\begin{split}
			&B(u,v)\coloneqq (h_2-h_1)(u)h_1(v)+(h_3-h_2)(u)h_3(v);\\
			&C(u,v,w)\coloneqq h_1(u)h_2(v)h_3(w)+h_1(v)h_2(w)h_3(u)+h_1(w)h_2(u)h_3(v).
		\end{split}
	\end{equation*}
    We also introduce here the following shortcut:
    \begin{equation*}
        C^\sigma(u,v,w) := C(u,v,w) + C(u,w,v)
    \end{equation*}
    
    \subsection{Definition of the (regularized) correlation functions and descendant fields} In this section we sketch the construction of the Toda CFT on the upper half-plane $\H$ as introduced in~\cite{CH_construction}. We also define the regularized descendant fields that we wish to define in the present document.

    \subsubsection{Toda correlation functions on $\H$} We first define the \emph{Toda field} on $\overline{\H}$ to be given by\footnote{This definition amounts to considering the Toda CFT on $\H$ with the metric $g(z)=d^2z/|z|_+^4$, as it is often done.} 
    \begin{equation*}
        \Phi \coloneqq \X - 2Q\ln \norm{\cdot}_+ +\bm{c},
    \end{equation*}
    where $\X$ is the GFF defined previously, and $\bm{c}$ is distributed according to the Lebesgue measure on $\R^2$. We denote by $\Phi_\rho$ the regularized Toda field in the sense of Equation~\eqref{eq:regularized}. The Vertex Operators are observables of the Toda field that are of fundamental importance. They depend on an insertion point $z\in\H$ or $s\in\R$ as well as a weight $\alpha$ or $\beta$ in $\R^2$, and are formally defined by considering the functionals
    \begin{equation*}
        V_\alpha(z)[\Phi]\coloneqq e^{\ps{\alpha,\Phi(z)}}\qt{for $z\in\H$, while for $t\in\R$}V_\beta(s)[\Phi]\coloneqq e^{\ps{\frac\beta2,\Phi(s)}}.
    \end{equation*}
    Based on these observables we can form the correlation functions of Vertex Operators, which depend on insertion points $(z_1,...,z_N,s_1,...,s_M) \in \H^N\times \R^M$, to which are associated weights \\
    $(\alpha_1,...,\alpha_N,\beta_1,...,\beta_M)\in (\R^2)^{N+M}$. The corresponding correlation function is then formally given by
    \begin{equation*}
        \ps{\prod_{k=1}^NV_{\alpha_k}(z_k)\prod_{l=1}^MV_{\beta_l}(s_l)}
    \end{equation*}
    where the average is made with respect to the law of the Toda field. We now recall how to provide a rigorous meaning to the latter.
    
    For the sake of simplicity we write $\bm z=(z_k)_{k=1,...,2N+M}$ for $(z_1,...,z_N,\bar{z}_1,...,\bar{z}_N,s_1,...,s_M)$ and $\bm\alpha=(\alpha_k)_{k=1,...,2N+M}$ for $(\alpha_1,...,\alpha_N,\alpha_1,...,\alpha_N,\beta_1,...,\beta_M)$. In order to define the correlations we also introduce the bulk cosmological constants $\mu_{B,i} > 0$ for $i=1,2$, as well as the boundary cosmological constants $\mu_{i,l}$, which we choose in $\C$ but with the additional assumption that $\Re(\mu_{i,l})\ge 0$, and set $\mu_i$ to be the piecewise constant measure on $\R$ given by
    $$
    \mu_i(dx) = \sum_{l=1}^M \mu_{i,l+1}\mathds{1}_{(s_l,s_{l+1})}(x) dx
    $$
    where by convention $s_0=-\infty$, $s_{M+1}=+\infty$ and $\mu_{M+1}=\mu_1$.
    
    We can then make sense of the correlation functions using a regularization procedure. Namely, for a suitable functional $F$, we can first consider the following quantity at the regularized level~\footnote{We choose not to renormalize the correlation functions using the partition function of the GFF as is usually done (e.g. in \cite{CH_construction}), since these constant quantities are not relevant for the present exposition.}:
    \begin{equation} \label{eq:reg correl}
        \begin{split}
            &\ps{F[\Phi]\prod_{k=1}^NV_{\alpha_k}(z_k)\prod_{l=1}^MV_{\beta_l}(s_l)}_{\delta,\eps,\rho} := \int_{\R^2} e^{-\ps{Q,\bm{c}}}\E\left[ F[\Phi_\rho]\prod_{k=1}^NV_{\alpha_k,\rho}(z_k)\prod_{l=1}^MV_{\beta_l,\rho}(s_l)\right.\\
            &\left.\times\exp\left( -\sum_{i=1}^2 \mu_{B,i} \int_{\Heps} \rho^{\gamma^2} e^{\ps{\gamma e_i,\Phi_\rho(x)}} dxd\bar{x} + \int_{\Reps} \rho^{\frac{\gamma^2}{2}} e^{\ps{\frac{\gamma}{2} e_i,\Phi_\rho(x)}} \mu_i(dx)\right)\right]d\bm{c}
        \end{split}
    \end{equation}
    where
    \begin{equation*}
        V_{\alpha_k,\rho}(z_k) \coloneqq \rho^{\frac{|\alpha_k|^2}{2}} e^{\ps{\alpha_k,\Phi_\rho(z_k)}} \qt{and}V_{\beta_l,\rho}(s_l) \coloneqq \rho^{\frac{|\beta_l|^2}{4}} e^{\ps{\frac{\beta_l}{2},\Phi_\rho(s_l)}}. 
    \end{equation*}
    Here the domains of integration are defined by \begin{equation*}
    \Heps = \left(\H + i\delta\right) \setminus \bigcup_{k=1}^N B(z_k,\eps)\qt{and}\Reps = \R \setminus \bigcup_{l=1}^M (s_l-\eps,s_l+\eps),
    \end{equation*}
    with $\eps$ and $\delta$ such that all the $B(z_k,\eps)$ lie in $\Heps$ and are disjoint, and all the intervals $(s_l-\eps,s_l+\eps)$ are disjoint. (see Figure \ref{fig:domains}). 
    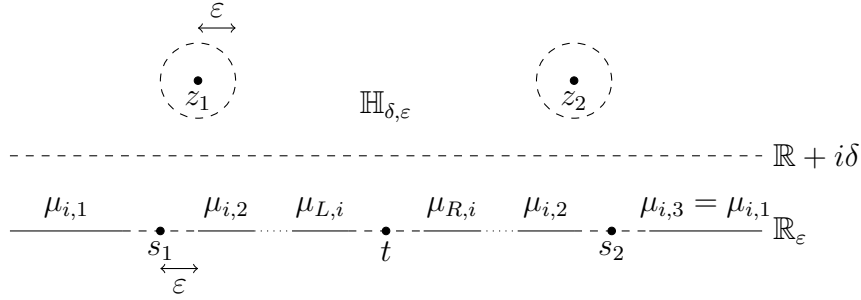
\begin{figure}
    \centering
    \begin{tikzpicture}
        \draw (-5,0) -- (-3.5,0) (-2.5,0) -- (-1.75,0) (-1.25,0) -- (-0.5,0) (0.5,0) -- (1.25,0) (1.75,0) -- (2.5,0) (3.5,0) -- (5,0) node [anchor=west] {$\mathbb{R}_\varepsilon$};
        \draw[dashed] (-3.5,0) -- (-2.5,0) (-0.5,0) -- (0.5,0) (2.5,0) -- (3.5,0) (-5,1) -- (5,1) node [anchor=west] {$\mathbb{R}+i\delta$};
        \draw [dotted] (-1.75,0) -- (-1.25,0) (1.25,0) -- (1.75,0);
        \foreach \Point/\PointLabel in {(-3,0)/s_1, (0,0)/t, (3,0)/s_2, (-2.5,2)/z_1, (2.5,2)/z_2}
        \draw [fill=black] \Point circle (0.05) node [anchor=north] {$\PointLabel$};
        \draw [<->] (-3,-0.5) -- (-2.5,-0.5) node [anchor=north east] {$\varepsilon$};
        \foreach \Point in {(-2.5,2), (2.5,2)}
        \draw [dashed] \Point circle (0.5);
        \draw [<->] (-2.5,2.7) -- (-2,2.7) node [anchor=south east] {$\varepsilon$};
        \foreach \Point/\PointLabel in {(-4.25,0)/\mu_{i,1}, (-2.125,0)/\mu_{i,2}, (-0.875,0)/\mu_{L,i}, (4.25,0)/\mu_{i,3}=\mu_{i,1}, (2.125,0)/\mu_{i,2}, (0.875,0)/\mu_{R,i}}
        \draw \Point node [anchor=south] {$\PointLabel$};
        \draw (0,2) node [anchor=north] {$\mathbb{H}_{\delta,\varepsilon}$};
    \end{tikzpicture}
    \caption{The domains $\mathbb{H}_{\delta,\varepsilon}$ and $\mathbb{R}_\varepsilon$}
    \label{fig:domains}
\end{figure}
    Note that applying a Girsanov transform (or Cameron-Martin theorem) allows one to write the regularized correlation function in the following form:
    \begin{equation} \label{eq:reg correl shifted}
        \begin{split}
            &\ps{F[\Phi]\prod_{k=1}^NV_{\alpha_k}(z_k)\prod_{l=1}^MV_{\beta_l}(s_l)}_{\delta,\eps,\rho} = C(\bm{z},\boldsymbol{\alpha})\int_{\R^2} e^{\ps{\bm{s},\bm{c}}}\E\left[ F[\Phi_\rho + H]\right.\\
            &\left.\times\exp\left( -\sum_{i=1}^2 \mu_{B,i} e^{\ps{\gamma e_i,\bm{c}}}\int_{\Heps} Z_{i,\rho}\rho^{\gamma^2} e^{\ps{\gamma e_i,\X_\rho(x)}} dxd\bar{x} + e^{\ps{\frac{\gamma}{2}e_i,\bm{c}}}\int_{\Reps} Z^\partial_{i,\rho}\rho^{\frac{\gamma^2}{2}} e^{\ps{\frac{\gamma}{2} e_i,\X_\rho(x)}} \mu_i(dx)\right)\right]d\bm{c} (1+o(1))
        \end{split}
    \end{equation}
    where $\bm{s} \coloneqq \sum \alpha_k + \frac12 \sum \beta_l -Q$, and where we have set:
    $$
    C(\bm{z},\boldsymbol{\alpha}) = \prod_{k< l} |z_k-z_l|^{-\ps{\alpha_k,\alpha_l}} \prod_{k=1}^M |z_k-\bar{z}_k|^{\frac{|\alpha_k|^2}{2}},\quad 
    H = \sum_{k=1}^{2N+M} \alpha_k G_{\hat{\C}}(\cdot,z_k),
    $$
    $$
    Z_{i,\rho} = \prod_{k=1}^{2N+M} \left( \frac{|x|_+}{|z_k-x|}\right)^{\ps{\gamma e_i,\alpha_k}} \qt{and} Z^\partial_{i,\rho} = \prod_{k=1}^{2N+M} \left( \frac{|x|_+}{|z_k-x|}\right)^{\ps{\frac{\gamma}{2} e_i,\alpha_k}}.
    $$
    
    If we choose $F=1$, then following \cite{CH_construction} we can show that under certain assumptions on the weights (the so-called \emph{Seiberg bounds}), the regularized correlation functions converge as $\rho$, $\eps$ and then $\delta$ tend to $0$, and are non-trivial in the limit. The difference here is that the boundary cosmological constants are no longer real but are rather chosen in $\C$, but this issue is addressed in \cite{Cer_HEM}. Eventually, one shows that the limiting correlation functions exist and are non-trivial if and only if the following conditions hold for all $i=1,2$:
	\begin{equation*} 
		\ps{\bm{s},\omega_i} > 0\ ;\ \ps{\alpha_k - Q,e_i} < 0 \quad\text{ for all } k=1,...,2N+M;
	\end{equation*}
    which corresponds to the Seiberg bounds~\cite[Theorem 4.6]{CH_construction}, supplemented by
    \begin{equation*} 
		\mu_{B,i} > 0 \text{ and } \Re \ \mu_{i,l} \ge 0 \text{ for all } l=1,...,M.
	\end{equation*}
    We denote by $\mc A_{N,M}$ the set of such weights satisfying the Seiberg bounds.
    We define similarly the limiting correlation function for general $F$ by the same procedure as soon as it makes sense, and denote
    $$
    \ps{F(\Phi)\prod_{k=1}^NV_{\alpha_k}(z_k)\prod_{l=1}^MV_{\beta_l}(s_l)} := \lim_{\delta\to0}\lim_{\eps\to0}\lim_{\rho\to0} \ps{F(\Phi)\prod_{k=1}^NV_{\alpha_k}(z_k)\prod_{l=1}^MV_{\beta_l}(s_l)}_{\delta,\eps,\rho}.
    $$
    The correlation functions obey the following KPZ identity, which is key to get rid of the metric dependent terms, when performing Gaussian integration by parts, as it will be seen below. 
    \begin{lemma}[KPZ identity]\label{lemma:KPZ}
        For $(\alpha_k,\beta_l)$ in $\mc A_{N,M}$, it holds that
        \begin{equation*}
        \begin{split}
            \bm{s}\ps{\prod_{k=1}^NV_{\alpha_k}(z_k)\prod_{l=1}^MV_{\beta_l}(s_l)}_{\delta,\eps,\rho} &= \sum_{i=1}^2 \left( \mu_{B,i} \gamma e_i \int_{\Heps} \ps{V_{\gamma e_i}(x)\prod_{k=1}^NV_{\alpha_k}(z_k)\prod_{l=1}^MV_{\beta_l}(s_l)}_{\delta,\eps,\rho}dxd\bar{x}\right. \\
            &\left.+ \frac{\gamma e_i}{2} \int_{\Reps} \ps{V_{\gamma e_i}(x)\prod_{k=1}^NV_{\alpha_k}(z_k)\prod_{l=1}^MV_{\beta_l}(s_l)}_{\delta,\eps,\rho} \mu_i(dx)\right).
            \end{split}
        \end{equation*}
    \end{lemma}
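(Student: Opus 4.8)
The plan is to prove the identity directly at the level of the regularized correlation functions (with $F=1$), by performing an integration by parts in the zero-mode $\bm c\in\R^2$ of the Toda field. The key structural observation is that, since $\Phi_\rho(x)=\X_\rho(x)-2Q\ln\norm{x}_++\bm c$ depends on $\bm c$ only through the additive constant, the entire $\bm c$-dependence of the integrand in \eqref{eq:reg correl} is explicit. Collecting the prefactor $e^{-\ps{Q,\bm c}}$, the contributions $e^{\ps{\alpha_k,\bm c}}$ and $e^{\ps{\beta_l/2,\bm c}}$ from the Vertex Operators, and the factors $e^{\ps{\gamma e_i,\bm c}}$ (bulk) and $e^{\ps{\gamma e_i/2,\bm c}}$ (boundary) from the cosmological terms, I would write
\begin{equation*}
\ps{\prod_{k=1}^NV_{\alpha_k}(z_k)\prod_{l=1}^MV_{\beta_l}(s_l)}_{\delta,\eps,\rho}=\int_{\R^2}e^{\ps{\bm s,\bm c}}G(\bm c)\,d\bm c,
\end{equation*}
where $\bm s=\sum_k\alpha_k+\tfrac12\sum_l\beta_l-Q$ and $G(\bm c)$ is the GFF-expectation of the $\bm c$-independent part of the Vertex Operators times $\exp\big(-\sum_i[\mu_{B,i}e^{\ps{\gamma e_i,\bm c}}\tilde I_i+e^{\ps{\gamma e_i/2,\bm c}}\tilde J_i]\big)$, with $\tilde I_i$ and $\tilde J_i$ the $\bm c$-free bulk and boundary cosmological integrals over $\Heps$ and $\Reps$ respectively.

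The heart of the argument is the vanishing of the $\R^2$-integral of a gradient,
\begin{equation*}
\int_{\R^2}\nabla_{\bm c}\big(e^{\ps{\bm s,\bm c}}G(\bm c)\big)\,d\bm c=0.
\end{equation*}
Expanding the gradient yields $\bm s\,e^{\ps{\bm s,\bm c}}G(\bm c)+e^{\ps{\bm s,\bm c}}\nabla_{\bm c}G(\bm c)$; integrating the first term reproduces $\bm s$ times the correlation function, i.e. the left-hand side of the identity. For the second term I would differentiate $G$ under the expectation, which brings down exactly $-\sum_i[\mu_{B,i}\gamma e_i\,e^{\ps{\gamma e_i,\bm c}}\tilde I_i+\tfrac{\gamma e_i}{2}e^{\ps{\gamma e_i/2,\bm c}}\tilde J_i]$ inside the expectation. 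The key algebraic check, using $\norm{\gamma e_i}^2=2\gamma^2$, is that the normalizations match: $\rho^{\gamma^2}e^{\ps{\gamma e_i,\Phi_\rho(x)}}$ is precisely $V_{\gamma e_i,\rho}(x)$ in the bulk, and $\rho^{\gamma^2/2}e^{\ps{\gamma e_i/2,\Phi_\rho(x)}}$ is $V_{\gamma e_i,\rho}(x)$ on the boundary. Since multiplying by $e^{\ps{\gamma e_i,\bm c}}$ shifts $\bm s$ to $\bm s+\gamma e_i$, reinstating the $x$-integral identifies each term with the insertion of $V_{\gamma e_i}(x)$ into the regularized correlation function; thus $-\int_{\R^2}e^{\ps{\bm s,\bm c}}\nabla_{\bm c}G\,d\bm c$ becomes exactly the right-hand side of the stated identity.

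The main obstacle is justifying the two analytic steps, namely differentiating under the $\bm c$-integral and the expectation, and the vanishing of the boundary terms at infinity in the integration by parts; both rest on the same decay estimates. At fixed $\delta,\eps,\rho$ the integrals $\tilde I_i$ are almost surely strictly positive, so with $\mu_{B,i}>0$ the factor $\exp(-\mu_{B,i}e^{\ps{\gamma e_i,\bm c}}\tilde I_i)$ decays doubly-exponentially whenever $\ps{\gamma e_i,\bm c}\to+\infty$, dominating any growth of $e^{\ps{\bm s,\bm c}}$ there. In the complementary cone where every $\ps{e_i,\bm c}\to-\infty$, the cosmological exponential tends to $1$, but the Seiberg bound $\ps{\bm s,\omega_i}>0$ forces $\ps{\bm s,\bm c}\to-\infty$: expanding $\bm c=\sum_j a_j\omega_j$ in the dual basis gives $\ps{e_i,\bm c}=a_i$ and $\ps{\bm s,\bm c}=\sum_j a_j\ps{\bm s,\omega_j}$, so $a_j<0$ makes $e^{\ps{\bm s,\bm c}}\to0$. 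The assumption $\Re\,\mu_{i,l}\ge0$ ensures that the possibly complex boundary factor has modulus at most $1$ and hence never spoils these bounds. Packaging these estimates into a dominated-convergence argument legitimizes the interchange of derivative and integral and shows that $e^{\ps{\bm s,\bm c}}G(\bm c)$ vanishes fast enough at infinity for the boundary contributions to disappear, completing the proof.
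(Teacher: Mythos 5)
Your proposal is correct, but it takes a genuinely different route from the paper's. You argue infinitesimally: after isolating the explicit zero-mode dependence you write the regularized correlation as $\int_{\R^2}e^{\ps{\bm s,\bm c}}G(\bm c)\,d\bm c$ and use the vanishing of $\int_{\R^2}\nabla_{\bm c}\bigl(e^{\ps{\bm s,\bm c}}G(\bm c)\bigr)d\bm c$, which produces the full vector identity in one stroke; the price is that you must justify differentiation under the integral and the absence of boundary terms at $|\bm c|\to\infty$, which you handle via the Seiberg bounds, a.s. positivity of the bulk GMC masses, and $\Re\,\mu_{i,l}\ge 0$ (to make the domination fully airtight one would additionally invoke negative moments of GMC, so that $\E\bigl[\exp\bigl(-\mu_{B,i}e^{\ps{\gamma e_i,\bm c}}I_i\bigr)\bigr]$ decays faster than any exponential in $\ps{e_i,\bm c}$, but this is standard and implicit in the convergence theory you cite; also, in your case analysis the complement of the region where some $\ps{e_i,\bm c}\to+\infty$ is where all coordinates are bounded above, not where all tend to $-\infty$, though the same estimate covers it). The paper instead argues in integrated form: it performs the change of variables $\bm c\to\bm c+\frac{\boldsymbol{\rho}}{\gamma}\log\mu_{B,1}$, extracts the prefactor $\mu_{B,1}^{-\ps{\bm s,\boldsymbol{\rho}}/\gamma}$, and compares the two ways of differentiating the correlation function in $\mu_{B,1}$; this sidesteps the boundary-term analysis at zero-mode infinity (the $\mu_{B,1}$-derivative is easily dominated), but as written it yields only the scalar projection of the identity along the Weyl vector $\boldsymbol{\rho}$, and recovering both components requires repeating the shift along each fundamental weight $\omega_i$ separately. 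In that sense your gradient argument is the derivative-at-the-identity of the one-parameter family of zero-mode translations underlying the paper's trick: the two proofs exploit the same mechanism, with yours giving the vector statement more directly at the cost of the decay estimates, and the paper's trading those estimates for bookkeeping in the cosmological constants.
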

    \begin{proof}
        In the definition of the correlation function \eqref{eq:reg correl}, make the change of variable $\bm{c} \to \bm{c} + \frac{\boldsymbol{\rho}}{\gamma } \log \mu_{B,1}$ to obtain
        \begin{equation*} 
        \begin{split}
            &\ps{\prod_{k=1}^NV_{\alpha_k}(z_k)\prod_{l=1}^MV_{\beta_l}(s_l)}_{\delta,\eps,\rho} = \mu_{B,1}^{-\frac{\ps{\bm{s},\boldsymbol{\rho}}}{\gamma}}\int_{\R^2} e^{-\ps{\bm{s},\bm{c}}}\E\left[ \prod_{k=1}^N\tilde{V}_{\alpha_k,\rho}(z_k)\prod_{l=1}^M\tilde{V}_{\beta_l,\rho}(s_l)\right.\\
            &\left.\times\exp\left( -\sum_{i=1}^2 \frac{\mu_{B,i}}{\mu_{B,1}} \int_{\Heps} \rho^{\gamma^2} e^{\ps{\gamma e_i,\Phi_\rho(x)}} dxd\bar{x} + \mu_{B,1}^{-\frac12}\int_{\Reps} \rho^{\frac{\gamma^2}{2}} e^{\ps{\frac{\gamma}{2} e_i,\Phi_\rho(x)}} \mu_i(dx)\right)\right]d\bm{c}
        \end{split}
        \end{equation*}
        where the notation $\tilde{V}$ refers to the Vertex Operator associated to the field $\Phi_\rho -\bm{c}$ (that is, without the zero-mode). Now, differentiating in $\mu_{B,1}$ and performing the opposite shift in the zero-mode yields
        \begin{equation*} 
        \begin{split}
            \frac{\partial}{\partial\mu_{B,1}}\ps{\prod_{k=1}^NV_{\alpha_k}(z_k)\prod_{l=1}^MV_{\beta_l}(s_l)}_{\delta,\eps,\rho} &= -\frac{\ps{\bm{s},\boldsymbol{\rho}}}{\gamma \mu_{B,1}} \ps{\prod_{k=1}^NV_{\alpha_k}(z_k)\prod_{l=1}^MV_{\beta_l}(s_l)}_{\delta,\eps,\rho} \\
            &+ \frac{\mu_{B,2}}{\mu_{B,1}} \int_{\Heps} \ps{V_{\gamma e_i}(x)\prod_{k=1}^NV_{\alpha_k}(z_k)\prod_{l=1}^MV_{\beta_l}(s_l)}_{\delta,\eps,\rho}  dxd\bar{x} \\
            &+ \sum_{i=1}^2\frac{1}{2\mu_{B,1}} \int_{\Reps} \ps{V_{\gamma e_i}(x)\prod_{k=1}^NV_{\alpha_k}(z_k)\prod_{l=1}^MV_{\beta_l}(s_l)}_{\delta,\eps,\rho}  \mu_i(dx).
        \end{split}
        \end{equation*}
        On the other hand, by differentiating directly the correlation function in $\mu_{B,1}$ one obtains
        \begin{equation*} 
            \frac{\partial}{\partial\mu_{B,1}}\ps{\prod_{k=1}^NV_{\alpha_k}(z_k)\prod_{l=1}^MV_{\beta_l}(s_l)}_{\delta,\eps,\rho} = -\ps{V_{\gamma e_1}(x)\prod_{k=1}^NV_{\alpha_k}(z_k)\prod_{l=1}^MV_{\beta_l}(s_l)}_{\delta,\eps,\rho}
        \end{equation*}
        hence the result by identification. 
    \end{proof}
    It is also worth mentioning here that the correlation functions obey the following conformal covariance property that will be used in Subsection \ref{subsec:ward_global}, and that can be found in \cite[Proposition 4.7]{CH_construction}:
    \begin{proposition}
        Let $\psi\in PSL(2,\R)$ be a M\"obius transform of the upper-half plane $\H$. Then for $\bm\alpha$ in $\mc A_{N,M}$ and a suitable functional $F$:
        \begin{equation}\label{eq:conf_cov}
            \begin{split}
            &\ps{F[\Phi\circ\psi+Q\ln\norm{\psi'}]\prod_{k=1}^NV_{\alpha_k}(\psi (z_k))\prod_{l=1}^MV_{\beta_l}(\psi(s_l))}\\
            &=\prod_{k=1}^N\norm{\psi'(z_k)}^{-2\Delta_{\alpha_k}}\prod_{l=1}^M\norm{\psi'(s_l)}^{-\Delta_{\beta_l}}\ps{F[\Phi]\prod_{k=1}^NV_{\alpha_k}(z_k)\prod_{l=1}^MV_{\beta_l}(s_l)}.
            \end{split}
        \end{equation}
    \end{proposition}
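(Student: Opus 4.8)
The plan is to prove~\eqref{eq:conf_cov} first at the regularized level~\eqref{eq:reg correl} and then pass to the limit $\rho,\eps,\delta\to0$; the engine is the conformal covariance of the Neumann GFF $\X$, and the rest amounts to bookkeeping powers of $\norm{\psi'}$ together with the control of the regularization in the limit. Since $\psi\in PSL(2,\R)$ has real coefficients, it maps $\H$ onto $\H$ and $\R$ onto $\R$, commutes with complex conjugation (so that $\psi(\bar x)=\overline{\psi(x)}$ and $\norm{\psi'(\bar x)}=\norm{\psi'(x)}$), and satisfies the Möbius identity $\norm{\psi(x)-\psi(y)}=\norm{x-y}\,\norm{\psi'(x)}^{\frac12}\norm{\psi'(y)}^{\frac12}$. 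Feeding this into $G(x,y)=G_{\hat{\C}}(x,y)+G_{\hat{\C}}(x,\bar y)$ yields, after a short computation, the transformation law $G(\psi(x),\psi(y))=G(x,y)+\varphi(x)+\varphi(y)$ with $\varphi(x)=-\ln\norm{\psi'(x)}+2\ln\frac{\norm{\psi(x)}_+}{\norm{x}_+}$. This is the analytic form of the conformal covariance of $\X$, whose additive structure is precisely what allows the $\varphi$-shift to be absorbed into the background $-2Q\ln\norm{\cdot}_+$, the zero-mode integral over $\bm c$, and explicit powers of $\norm{\psi'}$.

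Granting this, the substitution $\Phi\mapsto\Phi\circ\psi+Q\ln\norm{\psi'}$ appearing in~\eqref{eq:conf_cov} is exactly the transformation rule of the Toda field under $\psi$ (the additive $Q\ln\norm{\psi'}$ being dictated by the background charge $Q=(\gamma+\tfrac2\gamma)\weyl$), chosen so that the law of $\Phi\circ\psi+Q\ln\norm{\psi'}$ agrees with that of $\Phi$ up to the deterministic factors generated by $\varphi$. Inserting it into~\eqref{eq:reg correl} and changing variables $x=\psi(w)$ in the two cosmological integrals, three things happen: the zero-mode integral and the vector $\bm s$ are unchanged; the bulk and boundary screenings are \emph{marginal}, since $\ps{\gamma e_i,Q}=\gamma^2+2$ and $\norm{\gamma e_i}^2=2\gamma^2$ give $\Delta_{\gamma e_i}=1$, so that the covariance factor of the GMC measure exactly cancels the Jacobians $\norm{\psi'(w)}^2\,d^2w$ and $\norm{\psi'(w)}\,dw$ and the cosmological terms are left invariant (the piecewise-constant boundary weight $\mu_i$ being relabelled along $\psi$); and the insertions at $\psi(z_k),\psi(s_l)$ pick up the conformal weights. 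These weights combine a \emph{classical} contribution, coming from the background charge and the $\norm{\cdot}_+$ metric terms through $\varphi$, with a \emph{quantum} contribution coming from the renormalization constants $\rho^{\norm{\alpha_k}^2/2}$, $\rho^{\norm{\beta_l}^2/4}$ and the non-covariance of the mollification, and assemble into $\prod_k\norm{\psi'(z_k)}^{-2\Delta_{\alpha_k}}\prod_l\norm{\psi'(s_l)}^{-\Delta_{\beta_l}}$ with $\Delta_\alpha=\ps{\tfrac\alpha2,Q-\tfrac\alpha2}$. The same powers can be read off more explicitly from the Girsanov-shifted form~\eqref{eq:reg correl shifted} by transforming the prefactor $C(\bm z,\bm\alpha)$, the shift $H$ and the weights $Z_{i,\rho},Z^\partial_{i,\rho}$ through the Möbius distance identity.

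The main obstacle is to make this limiting step rigorous, since neither the mollifier $\eta_\rho$ nor the cut-off domains $\Heps$ and $\Reps$ are conformally covariant: the excised balls $B(z_k,\eps)$ and intervals $(s_l-\eps,s_l+\eps)$ are not mapped exactly onto their $\psi$-images, and $\eta_\rho(\psi(\cdot)-\psi(\cdot))$ differs from $\eta_\rho$ at the distorted scale $\rho\norm{\psi'}$. One must therefore show that all these mismatches contribute only errors that vanish in the iterated limit $\rho\to0$, then $\eps\to0$, then $\delta\to0$, and that the renormalization constants produce exactly the quantum part of $\Delta_\alpha$ in the process. This is the genuinely technical part of the argument, and it rests on the GMC convergence and moment estimates underlying the construction in~\cite{CH_construction}; once it is in place, collecting all the powers of $\norm{\psi'}$ yields~\eqref{eq:conf_cov}.
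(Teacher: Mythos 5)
Your proposal is correct and matches the approach behind the paper's proof of this proposition, which in the text is simply a citation of \cite[Proposition 4.7]{CH_construction}: the argument there is exactly your DKRV-style computation, namely the transformation rule $G(\psi(x),\psi(y))=G(x,y)+\varphi(x)+\varphi(y)$ of the Neumann Green function under $PSL(2,\R)$, the marginality $\Delta_{\gamma e_i}=1$ of the bulk and boundary screening insertions so that the GMC terms exactly absorb the Jacobians, and the bookkeeping of $\norm{\psi'}$ powers (classical plus quantum contributions assembling into $\norm{\psi'(z_k)}^{-2\Delta_{\alpha_k}}$ and $\norm{\psi'(s_l)}^{-\Delta_{\beta_l}}$) read off the Girsanov-shifted representation~\eqref{eq:reg correl shifted}, with the regularization mismatches controlled by the GMC convergence estimates of the construction paper. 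The one imprecision is your phrase that the law of $\Phi\circ\psi+Q\ln\norm{\psi'}$ \emph{agrees with that of} $\Phi$ up to deterministic factors: the covariance correction $\varphi(x)+\varphi(y)$ is not realizable as a deterministic shift of a Gaussian field (it is not even positive semidefinite on its own), and the identity only becomes exact at the level of the moment computations once the zero-mode integration over $\bm{c}$ enforces effective charge neutrality — but since this is precisely the explicit computation you invoke through~\eqref{eq:reg correl shifted}, the gloss is harmless and the sketch stands.
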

    Finally, we introduce the shorthand 
 	\begin{equation*}
 	    \V\coloneqq\prod_{k=1}^NV_{\alpha_k}(z_k)\prod_{l=1}^MV_{\beta_l}(s_l).
 	\end{equation*}
    in order to lighten the presentation. 

    \subsubsection{Descendant fields and currents}\label{subsec:SET}
    Let us denote by $\Phi$ the Toda field. The way we define the regularized descendant fields is by means of expressions of the form
    \[
        \ps{F[\Phi]V_\beta(t)\prod_{k=1}^NV_{\alpha_k}(z_k)\prod_{l=1}^MV_{\beta_l}(s_l)}_{\delta,\eps,\rho}
    \]
    where $F[\Phi]$ is a polynomial in the derivatives of $\Phi$ evaluated at $t$. The expression of this polynomial is key as it encodes the symmetries of the model, and is derived from the currents $\SET$ and $\Wb$ whose modes generate the corresponding representation of the $W_3$ algebra. Before providing an explicit expression for the descendants recall that the currents are defined by
    \begin{equation*}
        \SET(z)[\Phi]=\ps{Q,\partial^2\Phi(z)}-\ps{\partial\Phi(z),\partial\Phi(z)}
    \end{equation*}
    for the stress-energy tensor, while for the higher-spin current
    \begin{equation*}
        \Wb(z)[\Phi]=q^2h_2(\partial^3\Phi(z))-2qB(\partial^2\Phi(z),\partial\Phi(z))-8C(\partial\Phi(z),\partial\Phi(z),\partial\Phi(z)).
    \end{equation*}
    Here $z$ stands for a bulk point inside $\mathbb H$ or for a boundary insertion on $\R$, in which case the complex derivative is replaced by the usual (real) derivative.

    In the framework of Vertex Operators Algebras, the descendants of the Vertex Operators $V_\alpha$ can be found using the Operator Product Expansion between the above currents and $V_\alpha$, in the sense that we have a formal expansion as $z\to w$:
    \begin{equation*}
        \Wb(z)V_\alpha(w)=\sum_{n\geq 0}(z-w)^{n-3}\Wb_{-n}V_\alpha(w)
    \end{equation*}
    and likewise for $\SET$.
    The translation between the two languages has been worked out \textit{e.g.} in~\cite{Cer_VOA}; informally speaking in order to obtain a probabilistic interpretation of the above expansion we first shift $\Phi(z)$ to $\Phi_\alpha=\Phi+\alpha\ln\frac{1}{\norm{\cdot-w}}$ and then formally Taylor expand the field $\Phi(z)$ as $\sum_{n\geq 0}\frac{(z-w)^n}{n!}\partial^n\Phi(w)$. By doing so we arrive at the following expressions for the descendant fields associated to the stress-energy tensor:
    \begin{equation}\label{eq:vir_desc}
        \begin{split}
            &\L_{-n}V_\alpha[\Phi]\coloneqq \L_{-n}^\alpha[\Phi]V_\alpha[\Phi],\qt{with}\L_{-1}^\alpha[\Phi]\coloneqq \ps{\alpha,\partial\Phi}\quad\qt{while for $n\geq 2$}\\
            &\L_{-n}^\alpha[\Phi]=\left\langle(n-1)Q+\alpha,\frac{\partial^n\Phi}{(n-1)!}\right\rangle-\sum_{i=0}^{n-2}\left\langle\frac{\partial^{i+1}\Phi}{i!},\frac{\partial^{n-i-1}\Phi}{(n-2-i)!}\right\rangle
        \end{split}
    \end{equation}
    The expression of the descendants associated to the higher-spin current is slightly more involved:
    \begin{equation}\label{eq:W_desc}
        \begin{split}
            &\Wb_{-n}V_\alpha[\Phi]\coloneqq \Wb_{-n}^\alpha[\Phi]V_\alpha[\Phi],\qt{with}\Wb_{-1}^\alpha[\Phi]\coloneqq -qB(\alpha,\partial\Phi)-2C(\alpha,\alpha,\partial\Phi),\\
            &\Wb_{-2}^\alpha[\Phi]\coloneqq q\left(B(\partial^2\Phi,\alpha)-B(\alpha,\partial^2\Phi)\right)-2C(\alpha,\alpha,\partial^2\Phi)+2C^\sigma(\alpha,\partial\Phi,\partial\Phi)\\
            &\Wb_{-n}^\alpha[\Phi]\coloneqq (n-1)(n-2)q^2\ps{h_2,\Phi^{(n)}}+q\left((n-1)B(\Phi^{(n)},\alpha)-B(\alpha,\Phi^{(n)}\right)\\
		&\left.-2C(\alpha,\alpha,\Phi^{(n)})+\sum_{i=0}^{n-2}\left(4C(\Phi^{(i+1)},\Phi^{(n-i-1)},\alpha)-2i qB(\Phi^{(i+1)},\Phi^{(n-1-i)})\right)\right.\\
		&- \frac{8}{3}\sum_{i=0}^{n-3}\sum_{j=0}^{i}C(\Phi^{(j+1)},\Phi^{(i-j+1)},\Phi^{(n-i-2)})
        \end{split}
    \end{equation}
    where we have set $\Phi^{(n)}(z)\coloneqq \frac{\partial^n\Phi(z)}{(n-1)!}$ and recall that $C^\sigma(u,v,w)=C(u,v,w)+C(u,w,v)$.

    \subsubsection{Regularized descendant fields}
    Products of derivatives of the Toda field do not make sense as such since the GFF only exists in the distributional sense. However using its regularization $\Phi_\rho$ as done above we can actually make sense of its derivatives, while for the product we will rely on Wick products instead. To be more specific, we set for positive $\rho$
    \begin{eqs}
		&\L_{-n}V_\alpha(z)[\Phi_\rho]\coloneqq \quad :\L_{-n}^\alpha[\Phi]:\ V_{\alpha,\rho}[\Phi](z)\qt{where}\\
        &:\L_{-n}^\alpha[\Phi]: \quad \coloneqq \left\langle(n-1)Q+\alpha,\frac{\partial^n\Phi}{(n-1)!}\right\rangle-\sum_{i=0}^{n-2}:\left\langle\frac{\partial^{i+1}\Phi}{i!},\frac{\partial^{n-i-1}\Phi}{(n-2-i)!}\right\rangle:
	\end{eqs}
    with $:\cdot :$ denoting the Wick product of the Gaussian random variables involved (see~\cite[Chapter III]{Janson} for more details). For instance for centered Gaussian variables we have
	\[
	:\xi_1\xi_2:=\xi_1\xi_2-\expect{\xi_1\xi_2},\qt{and}:\xi_1\xi_2\xi_3:=\xi_1\xi_2\xi_3-\xi_1\expect{\xi_2\xi_3}-\xi_2\expect{\xi_3\xi_1}-\xi_3\expect{\xi_1\xi_2}.
	\] 
    
    The regularized descendant fields are then defined with correlation functions by considering
    \begin{equation*}
        \ps{\L_{-n}V_\beta(t)\prod_{k=1}^NV_{\alpha_k}(z_k)\prod_{l=1}^MV_{\beta_l}(s_l)}_{\delta,\eps,\rho}.
    \end{equation*}
    Of course the same procedure remains valid when we consider the $\Wb$ descendant fields instead of the Virasoro ones.

	\subsection{Method for defining the descendant fields}
	Our goal is to define the descendant fields associated with the (boundary) vertex operators $V_\beta$, which we will denote $\L_{-n}V_\beta$ or $\Wb_{-m}V_\beta$. To do so we will define expressions of the form 
	\begin{equation*}
		\ps{\L_{-n}V_\beta(t)\prod_{k=1}^NV_{\alpha_k}(z_k)\prod_{l=1}^MV_{\beta_l}(s_l)}\qt{and}\ps{\Wb_{-m}V_\beta(t)\prod_{k=1}^NV_{\alpha_k}(z_k)\prod_{l=1}^MV_{\beta_l}(s_l)}
	\end{equation*}
	where the weights are chosen in such a way that the correlation function\\ $\ps{V_\beta(t)\prod_{k=1}^NV_{\alpha_k}(z_k)\prod_{l=1}^MV_{\beta_l}(s_l)}$ is well-defined, that is $(\beta,\bm\alpha)\in\mc A_{N,M+1}$.
	
	\subsubsection{Gaussian integration by parts}
	In order to make sense of such quantities we will first define them at the regularized level, that is for fixed $\delta,\eps,\rho>0$ we look at the well-defined quantities
	\begin{equation*}
		\ps{\L_{-n}V_\beta(t)\prod_{k=1}^NV_{\alpha_k}(z_k)\prod_{l=1}^MV_{\beta_l}(s_l)}_{\delta,\eps,\rho}\qt{and}\ps{\Wb_{-m}V_\beta(t)\prod_{k=1}^NV_{\alpha_k}(z_k)\prod_{l=1}^MV_{\beta_l}(s_l)}_{\delta,\eps,\rho}.
	\end{equation*}
	For this purpose we will make use of the following Gaussian integration by parts formula to provide an explicit expression for such quantities:
	\begin{lemma}\label{lemma:GaussianIPP}
		Assume that $\bm\alpha$ belongs to $\mc A_{N,M}$. Then for any $t\in\R\setminus\{s_1,\cdots,s_M\}$, $p$ a positive integer and $u\in\R^2$:
		\begin{equation*}
			\begin{split}
				&\lim\limits_{\rho\to0}\Big\langle\frac{\ps{u,\partial^p\Phi(t)}}{(p-1)!}\prod_{k=1}^NV_{\alpha_k}(z_k)\prod_{l=1}^MV_{\beta_l}(s_l)\Big\rangle_{\rho,\eps,\delta}=\sum_{k=1}^{2N+M}\frac{\ps{u,\alpha_k}}{2(z_k-t)^p}\ps{\V}_{\eps,\delta}\\
				& -\sum_{i=1}^2\int_{\Reps}\frac{\ps{u,\gamma e_i}}{2(x-t)^p}\ps{V_{\gamma e_i}(x)\V}_{\eps,\delta}\mu_i(dx)+\mu_{B,i}
				\int_{\Heps}\left(\frac{\ps{u,\gamma e_i}}{2(x-t)^p}+\frac{\ps{u,\gamma e_i}}{2(\bar x-t)^p}\right)\ps{V_{\gamma e_i}(x)\V}_{\eps,\delta}d^2x.
			\end{split}
		\end{equation*}
	\end{lemma}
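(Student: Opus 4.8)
The plan is to prove the identity first at the regularized level, with $\rho,\eps,\delta>0$ all fixed, and only afterwards pass to the limit $\rho\to0$ with $\eps,\delta$ held fixed. Writing $\Phi_\rho=\X_\rho-2Q\ln\norm{\cdot}_++\bm{c}$, the zero mode $\bm{c}$ has vanishing derivatives for $p\ge 1$, so I would split the insertion $\frac{\ps{u,\partial^p\Phi_\rho(t)}}{(p-1)!}$ into a deterministic part $-\frac{2\ps{u,Q}\,\partial^p\ln\norm{t}_+}{(p-1)!}$, which factors out of the expectation and multiplies $\ps{\V}_{\delta,\eps,\rho}$, and a centered Gaussian part $\frac{\ps{u,\partial^p\X_\rho(t)}}{(p-1)!}$, to which integration by parts is applied.

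For the Gaussian part I would invoke Gaussian integration by parts (equivalently Cameron--Martin) against the functional built from the product of vertex operators and the exponential of the bulk and boundary interaction terms. Since this functional depends on $\X_\rho$ only through the linear forms $\ps{\alpha_k,\X_\rho(z_k)}$, $\ps{\tfrac{\beta_l}{2},\X_\rho(s_l)}$ and $\ps{\gamma e_i,\X_\rho(x)}$, the formula replaces the insertion by a sum of covariances $\ps{u,\cdot}\,\partial^p G_\rho(t,\cdot)$: one contraction per vertex operator, producing a scalar multiple of $\ps{\V}_{\delta,\eps,\rho}$, and one contraction integrated against each screening term, producing an inserted $V_{\gamma e_i}(x)$ and hence $\ps{V_{\gamma e_i}(x)\V}_{\delta,\eps,\rho}$ (with signs and the $dxd\bar x$ versus $d^2x$ measure factor dictated by the conventions of \eqref{eq:reg correl}). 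Differentiating the Green function is the computational core: using $G=G_{\hat{\C}}(\cdot,\cdot)+G_{\hat{\C}}(\cdot,\bar\cdot)$, the singular part of $\partial^p G(t,z_k)$ is $\tfrac{(p-1)!}{2}\big(\tfrac{1}{(z_k-t)^p}+\tfrac{1}{(\bar z_k-t)^p}\big)$, the second pole being the Neumann image and the factor $\tfrac12$ reflecting that each $G_{\hat{\C}}$ carries half of the logarithmic singularity. Dividing by $(p-1)!$ yields exactly the poles $\tfrac{\ps{u,\alpha_k}}{2(z_k-t)^p}$ of the statement, the image terms accounting for the doubled index range $k=1,\dots,2N+M$.

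The crucial structural step is that all the remaining metric-dependent pieces cancel. These are the terms proportional to $\partial^p\ln\norm{t}_+$, coming from the $\ln\norm{t}_+$ summands of each $G_{\hat{\C}}(t,\cdot)$ together with the deterministic shift $-2Q\,\partial^p\ln\norm{t}_+$. Collecting the coefficient of $\partial^p\ln\norm{t}_+$, one finds, up to a common factor, the combination $\ps{u,\bm{s}}\ps{\V}$ minus the integrated screening charges, where $\bm{s}=\sum\alpha_k+\tfrac12\sum\beta_l-Q$; this is precisely the content of the KPZ identity of Lemma~\ref{lemma:KPZ} tested against $u$, so it vanishes. This cancellation is what removes the background-charge contributions and produces the clean covariant form of the right-hand side.

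Finally I would let $\rho\to0$ at fixed $\eps,\delta$. The vertex contractions and the bulk screening integral present no difficulty: the cutoff $\Heps\subset\H+i\delta$ keeps $x$ at distance at least $\delta$ from the real line, so the kernels $\tfrac{1}{(x-t)^p}$ and $\tfrac{1}{(\bar x-t)^p}$ are bounded there and dominated convergence applies while the regularized correlations converge to their $\ps{\cdot}_{\eps,\delta}$ limits. The main obstacle is the boundary screening integral over $\Reps$, whose limiting kernel $\tfrac{1}{(x-t)^p}$ is non-integrable at $x=t$: one must retain the mollified kernel $\partial^p G_\rho(t,x)$ and show that its integral against the density $x\mapsto\ps{V_{\gamma e_i}(x)\V}_{\eps,\delta}$, which is smooth near $t$ since $t$ is not an insertion point, converges in the principal-value/finite-part sense as $\rho\to0$. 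Establishing this convergence, using the regularity of that density near $t$ and the symmetry of the mollifier, is the technical heart of the proof; the stated identity then follows by assembling the pole terms, the two screening integrals, and the vanishing background contribution.
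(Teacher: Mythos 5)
Your proposal is correct and follows essentially the same route as the paper: the paper's proof is a one-line adaptation of \cite[Lemma 2.6]{Cer_HEM}, whose argument is precisely the one you reconstruct — Gaussian integration by parts at the regularized level, the poles of $\partial^p G(t,\cdot)$ with their Neumann images explaining the range $k=1,\dots,2N+M$, and the cancellation of the metric-dependent $\partial^p\ln\norm{t}_+$ contributions via the KPZ identity of Lemma~\ref{lemma:KPZ}, which is exactly the ingredient the paper singles out as the key to the adaptation. Your closing discussion of the $\rho\to0$ limit of the boundary screening integral near $x=t$ is, if anything, more careful than the source, since in the paper's actual applications $t$ is itself a boundary insertion so that $\Reps$ excludes $(t-\eps,t+\eps)$ and the kernels remain bounded at fixed $\eps,\delta$.
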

	\begin{proof}
		This statement can be adapted from~\cite[Lemma 2.6]{Cer_HEM}, the only difference being that weights now belong to $\R^2$ rather than $\R$, but all the arguments employed there readily applies in this setting by using the KPZ identity from Lemma~\ref{lemma:KPZ} (in the bulk case this is done in~\cite[Lemma 3.3]{Toda_OPEWV}).
	\end{proof}
	
	Lemma~\ref{lemma:GaussianIPP} can be generalized in a straightforward way to the following equality for $p_1,\cdots,p_m$ positive integers, using Wick products as considered above:
	\begin{equation}\label{eq:IPP_product}
		\begin{split}
			&\lim\limits_{\rho\to0}\frac{1}{(p_1-1)!}\Big\langle:\prod_{l=1}^m\ps{u_l,\partial^{p_l}\Phi(t)}:\prod_{k=1}^NV_{\alpha_k}(z_k)\prod_{l=1}^MV_{\beta_l}(s_l)\Big\rangle_{\rho,\eps,\delta}\\
			&=\sum_{k=1}^{2N+M}\frac{\ps{u_1,\alpha_k}}{2(z_k-t)^{p_1}}\ps{:\prod_{l=2}^m\ps{u_l,\partial^{p_l}}\Phi(t):\V}_{\eps,\delta}\\
			& -\sum_{i=1}^2\mu_{B,i}
			\int_{\Heps}\left(\frac{\ps{u_1,\gamma e_i}}{2(x-t)^{p_1}}+\frac{\ps{u_1,\gamma e_i}}{2(\bar x-t)^{p_1}}\right)\ps{:\prod_{l=2}^m\ps{u_l,\partial^{p_l}\Phi(t)}:V_{\gamma e_i}(x)\V}_{\eps,\delta}dxd\bar x\\
			&-\sum_{i=1}^2
			\int_{\Reps}\frac{\ps{u_1,\gamma e_i}}{2(x-t)^{p_1}}\ps{:\prod_{l=2}^m\ps{u_l,\partial^{p_l}\Phi(t)}:V_{\gamma e_i}(x)\V}_{\eps,\delta}\mu_{i}(dx).
		\end{split}
	\end{equation}
	
	\subsubsection{Identification of the singular terms}
	As explained above the expression of the regularized correlation functions with descendant fields involve integrals over the domains $\Heps$ and $\Reps$, where the integrand is given by regularized correlation functions against some explicit rational functions. 
	Our goal is to show that such expressions can be written as a sum of two terms, the first one having a well-defined limit that depends analytically in the weights $\bm\alpha$ (which will be called $(P)$-class below), while the second one may diverge depending on the value of $\beta$: this second term will be a \lq\lq remainder term". Our goal is to identify this remainder term and define the descendant field as the limit of the $(P)$-class term in the two-terms expansion of the regularized correlation functions.  
	
	In order to identify this remainder term we will split the integrals coming from Gaussian integration by parts between regular and singular parts, that is away and around the insertion $t$, and introduce in this prospect the shorthands
	\begin{equation*}\label{eq:def_It}
		\begin{split}
			&\It F(t;\cdot)\coloneqq \sum_{i=1,2}\int_{\R_\eps}F^{i}(t;s)\mu_i(ds)+\mu_{B,i}\int_{\Heps}\left(F^i(t;x)+F^i(t;\bar x)\right)d^2x,\\
			&\Is F(t;\cdot)\coloneqq \sum_{i=1,2}\int_{\R_\eps^1}F^{i}(t;s)\mu_i(ds)+\mu_{B,i}\int_{\Heps^1}\left(F^i(t;x)+F^i(t;\bar x)\right)d^2x\qt{and}\\
			&\Ir F(t;\cdot)\coloneqq \sum_{i=1,2}\int_{\R_\eps^c}F^{i}(t;s)\mu_i(ds)+\mu_{B,i}\int_{\Heps^c}\left(F^i(t;x)+F^i(t;\bar x)\right)d^2x
		\end{split}
	\end{equation*}
	with $\Reps^1\coloneqq\Reps\cap B(t,r)$ for $r>0$ small enough so that all the insertions that appear in the above correlation functions are at distance at least $2r$ from $t$. We likewise denote $\Heps^1\coloneqq \Heps\cap B(t,r)$ and $\Heps^c\coloneqq\Heps\setminus\Heps^1$. 
	Here the notation $F(t;\cdot)$ indicate a pair of functions $(F^1,F^2)$; they should be thought of as linear combinations of expressions of the form $\frac{1}{(t-\cdot)^{n}}\ps{V_{\gamma e_i}(\cdot)V_\beta(t)\V}_{\delta,\eps,\rho}$ that arise in the Gaussian integration by parts. 
 
    More generally since the Gaussian integration by parts will involve multiple integrals we can analogously define for tuples of functions $(F^{i_1,\cdots,i_p}(t;x_1,\cdots,x_p))_{\bm i\in\{1,2\}^p}$ the notation
	\begin{equation*}
		\begin{split}
			\Is^p F(t;\cdot)\coloneqq \sum_{\bm i \in\{1,2\}^p}\sum_{\substack{k_1,k_2,k_3\geq0\\ k_1+k_2+k_3=p}}&\int_{\mc R_{k_1}\times\mc H_{k_2,k_3}}\prod_{l=1}^{k_1}\mu_{i_l}(ds_l)\prod_{l=k_1+1}^{p} \mu_{B,i_l}d^2x_l\\
			&F^{\bm i}(t;s_1,\cdots,s_{k_1},x_{k_1+1}\cdots,x_{k_1+k_2},\bar x_{k_1+k_2+1},\cdots,\bar x_p)
		\end{split}
	\end{equation*}
    where $\mc R_{k_1}=\left(\Reps\cap B(t,r_1)\right)\times\cdots\times\left(\Reps\cap B(t,r_{k_1})\right)$ for $r_1>\cdots>r_{k_1}>0$ and likewise for $\mc H_{k_2+k_3}$.
	More generally with an obvious adaptation we can analogously define the quantities 
	\[
	\It^{p_1}\times\Is^{p_2}\times\Ir^{p_3}F(t;\cdot)    
	\]
	where the integrand is the same as above but the first $p_1$ integrals are taken over either $\Reps$ or $\Heps$, while the following $p_2$ ones are evaluated around the insertion $t$ and the last $p_3$ ones are defined away from $t$, where the different radii chosen will be made explicit.
	
	\subsubsection{Symmetrization identities and Stoke's formula}
	In order to make explicit the singular behaviour of such integrals and discard \lq\lq fake singularities" we will rely on \lq\lq symmetrization identites", that is equalities for non-coinciding points
	\[
	\sum_{i=1}^{n-1}\frac{1}{(x-t)^i(y-t)^{n-i}}=\frac{1}{x-y}\left(\frac{1}{(y-t)^{n-1}}-\frac{1}{(x-t)^{n-1}}\right),
	\]
	that we use to simplify the expression of the regularized correlation functions with descendants. This will allow us to rewrite such quantities in term of integrals of \textit{total derivatives}. More precisely we will see that the remainder terms coming from Gaussian integration by parts can be written as a linear combination of expressions of the form
	\begin{equation}\label{eq:blablabla}
		\mathfrak R\coloneqq\Is^{p_1}\times \Ir^{p_2}\times\It^{p_3}\left[\partial_{x_1}\cdots\partial_{x_l}\left(F(t,\bm x)\ps{V_{\gamma e_{i_1}}(x_1)\cdots V_{\gamma e_{i_p}}(x_p)V_\beta(t)\V}_{\delta,\eps}\right)\right].
	\end{equation}
	These can be transformed using Stoke's formula: namely we can write that
    \begin{align*}
        &\Is\left[\partial_{x_1}\left(F(t,x)\ps{V_{\gamma e_i}(x)V_\beta(t)\V}_{\delta,\eps}\right)\right]=\\
        &\sum_{i=1}^2\mu_{L,i_1}\left[F(t,x)\ps{V_{\gamma e_i}(x)V_\beta(t)\V}_{\delta,\eps}\right]_{t-r}^{t-\eps}+\mu_{R,i_1}\left[F(t,x)\ps{V_{\gamma e_i}(x)V_\beta(t)\V}_{\delta,\eps}\right]^{t+r}_{t+\eps}\\
        &+\mu_{B,i}\int_{(t-r,t+r)}\left(F(t,x+i\delta)-F(t,x-i\delta)\right)\ps{V_{\gamma e_i}(x+i\delta)V_\beta(t)\V}_{\delta,\eps} \frac{idx}2\\
        &+\mu_{B,i}\oint_{\Heps\cap \partial B(t,r)}\ps{V_{\gamma e_i}(\xi)V_\beta(t)\V}_{\delta,\eps}\left(F(t,\bar\xi)\frac{d\bar\xi}2-F(t,\xi)\frac{id\xi}2\right).
    \end{align*}
    More generally applying Stoke's formula to Equation~\eqref{eq:blablabla} will yield several terms of this form and involving composite integrals.
    Such expressions will contain the remainder terms but also some convergent quantities that also depend on $r$. In the above example, the remainder terms will be given by 
    \begin{align*}
        &\sum_{i=1}^2\mu_{L,i_1}\left(F(t,t-\eps)\ps{V_{\gamma e_i}(t-\eps)V_\beta(t)\V}_{\delta,\eps}\right)-\mu_{R,i_1}\left(F(t,t+\eps)\ps{V_{\gamma e_i}(t+\eps)V_\beta(t)\V}_{\delta,\eps}\right)\\
        &+\mu_{B,i}\int_{(t-r,t+r)}\left(F(t,x+i\delta)-F(t,x-i\delta)\right)\ps{V_{\gamma e_i}(x+i\delta)V_\beta(t)\V}_{\delta,\eps} \frac{idx}2.
    \end{align*}
    One sees that for $\ps{\beta,e_i}$ negative enough this converges to $0$ as first $\eps$ and then $\delta$ go to $0$. Conversely if $F=1$ and $\ps{\beta,\gamma e_i}$ is positive then this term is seen to be divergent.
    
    To discard the other terms appearing in the expansion, for $\mathfrak R$ given by Equation~\eqref{eq:blablabla} we define $\tilde{\mathfrak R}$ to be the sum of the terms that involve at least one such remainder term. Put differently $\tilde{\mathfrak R}$ is defined from $\mathfrak R$ by removing the terms involving \textbf{only} correlation functions $\ps{\prod_{p=1}^nV_{\gamma e_{i_p}}(x_p)V_\beta(t)\V}_{\delta,\eps}$ where all the $x_p$ are at fixed distance $r_p$ from $t$. This remainder term $\tilde{\mathfrak R}$ will be of crucial importance in the definition of the descendant field.
	
	\subsubsection{Taking the limit}
	We then investigate the limit of these remainder terms as first $\rho\to0$ and then $\eps$ and $\delta$ go to $0$. As we will see (and for generic value of $\beta$) they will either vanish in the limit or diverge; in the latter case we need to take this into account when defining the correlation function $\ps{\L_{-n}V_\beta(t)\prod_{k=1}^NV_{\alpha_k}(z_k)\prod_{l=1}^MV_{\beta_l}(s_l)}$. Namely if we denote by $\tilde{\mathfrak{L}}_{-n,\delta,\eps,\rho}(\bm\alpha)$ such remainder terms the above correlation function will be defined as the limit
	\begin{equation*}
		\begin{split}
			&\ps{\L_{-n}V_\beta(t)\prod_{k=1}^NV_{\alpha_k}(z_k)\prod_{l=1}^MV_{\beta_l}(s_l)}\coloneqq\\
			&\lim\limits_{\delta,\eps,\rho\to0}\ps{\L_{-n}V_\beta(t)\prod_{k=1}^NV_{\alpha_k}(z_k)\prod_{l=1}^MV_{\beta_l}(s_l)}_{\delta,\eps,\rho}-\tilde{\mathfrak{L}}_{-n,\delta,\eps,\rho}(\bm\alpha).
		\end{split}
	\end{equation*}
	For instance we will see that
	\begin{equation*}
		\begin{split}
			&\tilde{\mathfrak{L}}_{-1,\delta,\eps,\rho}(\bm\alpha)=\sum_{i=1}^2\left(\mu_{L,i}\ps{V_{\gamma e_i}(t-\eps)\V}_{\delta,\eps}-\mu_{R,i}\ps{V_{\gamma e_i}(t+\eps)\V}_{\delta,\eps}\right)
		\end{split}
	\end{equation*} 
	so that the $\L_{-1}V_\beta$ descendant is defined within correlation functions by setting
	\begin{equation*}
		\begin{split}
			&\ps{\L_{-1}V_\beta(t)\prod_{k=1}^NV_{\alpha_k}(z_k)\prod_{l=1}^MV_{\beta_l}(s_l)}\coloneqq\lim\limits_{\delta,\eps,\rho\to0}\\
			&\ps{\L_{-1}V_\beta(t)\V}_{\delta,\eps,\rho}-\sum_{i=1}^2\Big(\mu_{L,i}\ps{V_{\gamma e_i}(t-\eps)V_\beta(t)\V}_{\delta,\eps}-\mu_{R,i}\ps{V_{\gamma e_i}(t+\eps)V_\beta(t)\V}_{\delta,\eps}\Big).
		\end{split}
	\end{equation*}
	
	The same applies for other Virasoro descendants, but the expressions for the remainder terms get increasingly involved. And likewise, we will proceed in the same way to make sense the $W$-descendants, in which case their definitions rely on the very same method but involve some additional combinatorial identities due to the higher-spin symmetry.

    \subsubsection{Some notations}
	In order to lighten the computations to come we introduce several shortcuts. First of all recall the definition of the symbols $\It$, $\Is$  and $\Ir$ from Equation~\eqref{eq:def_It}. We will also frequently use the notation $\V\coloneqq\prod_{k=1}^NV_{\alpha_k}(z_k)\prod_{l=1}^MV_{\beta_l}(s_l)$ to shorten the computations.  As explained in the Gaussian integration by parts formula from Equation~\eqref{eq:IPP_product}, when defining the descendants we will consider correlation functions containing additional insertion $V_{\gamma e_i}$. To simplify the treatment of these quantities we therefore set for indices $i_k\in\{1,2\}$ and distinct insertions $x_k\in \H\cup\R$
    \begin{equation}\label{eq:def_Psi}
        \Psi_{i_1,\cdots,i_l}(x_1,\cdots,x_l)\coloneqq \ps{V_{\gamma e_{i_1}(x_1)}\cdots V_{\gamma e_{i_l}(x_l)}V_\beta(t)\V}_{\rho,\eps,\delta}
    \end{equation}
    where for the sake of simplicity we omit the regularization indices on the left-hand side.

    \subsection{Some technical estimates}
    In order to implement the previous method we will need to ensure that the quantities are well-defined and for this it is necessary to understand several of their analytic properties. We gather such properties in this subsection. Since the proofs of these statements are very similar to the case of boundary Liouville CFT (for the fusion estimates~\cite{fusion} while for analycity we refer to~\cite{ARSZ}) and of bulk Toda CFT (fusion estimates can be found in~\cite[Lemma 3.2]{Toda_OPEWV} and analycity in~\cite[Section 2.2]{Toda_correl2}), and proceed in the exact same way, we do not include them in the present document.
    
	\subsubsection{Bounds at infinity}
    As explained above, when implementing our method we will need to consider integrals over the upper-half plane or the real line that contain correlation functions. These correlation functions feature additional Vertex Operators of the form $V_{\gamma e_i}$ for some $i=1,2$.  In order to ensure that such integrals are indeed well-defined we first need to understand what happens when these insertions diverge. To this end we provide the following statement which provides bounds on the correlation functions when their arguments go to $\infty$:
    \begin{lemma}\label{lemma:inf_integrability_toda}
		Assume that $\bm{\alpha}\in\mc A_{N,M}$ and consider for $i=1,2$ additional insertions $\bm{x}_i \coloneqq\left(x^{(1)}_i,\cdots,x^{(N_i)}_i\right)\in\H^{N_i}$ and $\bm{y}_i \coloneqq\left(y^{(1)}_i,\cdots,y^{(M_i)}_i\right)\in\R^{M_i}$. Then for any  $h>0$, if for $i=1,2$ the $\bm{x}_i$, $\bm{y}_i$ and $\bm{z}$ stay in the domain $U_h\coloneqq \left\lbrace{\bm w, h<\min\limits_{n\neq m}\norm{w^{(n)}-w^{(m)}}}\right\rbrace$, then there exists $C=C_h$ such that, uniformly in $\delta,\eps,\rho$,
		\begin{equation*}
			\ps{\prod_{i=1}^2 \prod_{n=1}^{N_i}V_{\gamma e_i}\left(x^{(n)}_i\right) \prod_{m=1}^{M_i}V_{\gamma e_i}\left(y^{(m)}_i\right)\V}_{\delta,\eps,\rho}\leq C_h \prod_{i=1}^2 \prod\limits_{n=1}^{N_i}\left(1+\norm{x^{(n)}_i}\right)^{-4}\prod\limits_{m=1}^{M_i}\left(1+\norm{y^{(m)}_i}\right)^{-2}.
		\end{equation*}
	\end{lemma}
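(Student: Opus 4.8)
The plan is to recognise the exponents $4$ and $2$ as $4\Delta_{\gamma e_i}$ and $2\Delta_{\gamma e_i}$, where $\Delta_{\gamma e_i}=\ps{\tfrac{\gamma e_i}{2},Q-\tfrac{\gamma e_i}{2}}$ is the conformal weight of the primary $V_{\gamma e_i}$. Using $Q=(\gamma+\tfrac2\gamma)\weyl$ together with $\ps{\weyl,e_i}=1$ and $\ps{e_i,e_i}=2$ one computes $\Delta_{\gamma e_i}=\tfrac\gamma2(\gamma+\tfrac2\gamma)-\tfrac{\gamma^2}{4}\cdot2=1$, so that $V_{\gamma e_i}$ is a weight-one primary and the conformal covariance \eqref{eq:conf_cov} predicts precisely the decay $|w|^{-4\Delta_{\gamma e_i}}=|w|^{-4}$ for a bulk insertion and $|w|^{-2\Delta_{\gamma e_i}}=|w|^{-2}$ for a boundary insertion as its argument tends to infinity, the factor $4$ versus $2$ reflecting that a boundary field carries only a single copy of the weight. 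First I would reduce, using the separation built into the domain $U_h$, to the decay in a single insertion leaving every compact set while all remaining points stay $h$-separated and bounded; the product over $i,n,m$ on the right-hand side is then recovered by iterating this one-point estimate.

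To make this uniform in $\delta,\eps,\rho$ I would work directly at the regularised level through the shifted representation \eqref{eq:reg correl shifted}, which isolates the deterministic prefactor $C(\bm z,\bm\alpha)=\prod_{k<l}|z_k-z_l|^{-\ps{\alpha_k,\alpha_l}}\prod_k|z_k-\bar z_k|^{|\alpha_k|^2/2}$ from a Gaussian expectation. Since $\mu_{B,i}>0$ and $\Re\,\mu_{i,l}\ge0$, the cosmological exponential has modulus at most one, so that after carrying out the $\bm c$-integral—whose convergence is governed by the Seiberg conditions defining $\mc A_{N,M}$ and is only reinforced by the extra insertions, because $\ps{\gamma e_i,\omega_j}=\gamma\,\delta_{ij}\ge0$ and $\ps{\gamma e_i-Q,e_j}<0$ throughout $\gamma\in(0,\sqrt2)$—one is left with a quantity controlled by (fractional) moments of the bulk and boundary GMC masses. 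The position of the far-away insertion then enters only through the cross terms $\prod_k|w-z_k|^{-\ps{\gamma e_i,\alpha_k}}$ of $C(\bm z,\bm\alpha)$, through the shift $H=\sum_k\alpha_k G_{\hat{\C}}(\cdot,z_k)$ and through the weights $Z_{i,\rho},Z^\partial_{i,\rho}$; combining the explicit power-law decay of these cross terms with uniform GMC moment bounds—obtained by comparing the mollified field to the exact one via Kahane's convexity inequality—yields a bound of the announced form. This is precisely the mechanism of the boundary-Liouville fusion estimates \cite{fusion} and of \cite[Section 2.2]{Toda_correl2}, so that only the bookkeeping has to be adapted.

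The hard part will be the uniformity in the cut-offs: the covariance \eqref{eq:conf_cov} is available only for the limiting correlation functions, so the inversion argument of the first paragraph cannot be applied verbatim to the regularised objects, and the Jacobian factor $|w|^{2\Delta_{\gamma e_i}}$ has to be reincarnated as genuine GMC moment bounds that survive the limits $\rho\to0$, then $\eps\to0$, then $\delta\to0$ taken in this order. Two features specific to the $\sl_3$ boundary setting demand extra care within this scheme: the weights are vector-valued, so one must track the pairings $\ps{\gamma e_i,\alpha_k}$ rather than a single scalar coupling and verify that they keep the relevant exponents inside the convergence range prescribed by $\mc A_{N,M}$; and the boundary cosmological constants are complex, so throughout the argument the crude positivity $e^{-\mu_{B,i}\int\cdots}\le1$ must be replaced by $|e^{-\int\cdots\,\mu_i}|\le e^{-\int\cdots\,\Re\mu_i}\le1$, exactly as in \cite{Cer_HEM}. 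Once these points are settled the decay rates $-4$ and $-2$ are forced by $\Delta_{\gamma e_i}=1$.
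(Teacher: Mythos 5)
Your proposal is correct and follows essentially the same route as the paper, which in fact omits the proof of Lemma~\ref{lemma:inf_integrability_toda} entirely and defers to the boundary Liouville estimates of \cite{fusion} and the bulk Toda estimates of \cite[Lemma 3.2]{Toda_OPEWV} and \cite[Section 2.2]{Toda_correl2} --- precisely the argument you reconstruct: work at the regularized level via the Girsanov-shifted representation, use the Seiberg bounds (preserved under the extra $\gamma e_i$ insertions), control the field uniformly in $\rho$ by GMC moment bounds, handle the complex boundary cosmological constants by the modulus bound as in \cite{Cer_HEM}, and read off the exponents $-4$ and $-2$ from $\Delta_{\gamma e_i}=1$. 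One point to tighten in your write-up: you cannot bound the \emph{entire} cosmological exponential by one before integrating over $\bm{c}$ (the zero-mode integral of $e^{\ps{\bm{s},\bm{c}}}$ alone diverges); the modulus bound should be applied only to the complex boundary part, while the bulk potential with $\mu_{B,i}>0$ must be retained, since it is exactly what makes the $\bm{c}$-integral converge and produces the negative fractional GMC moments you invoke.
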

    Thanks to this estimate we see that the correlation functions are integrable at $\infty$. This will allow us in the sequel to infer that boundary terms that may come from the successive integration by parts do indeed vanish at infinity. As such we won't consider such terms in the definition of the descendant fields in the next two sections.
	
	\subsubsection{Fusion estimates}	
	In order to ensure finiteness of the correlation functions and of their derivatives we will also need to ensure that the integrals that we encounter are not singular near the insertion points. For this purpose we rely on \textit{fusion asymptotics} of the correlation functions, that is their rate of divergence when several insertion points collide: 
	\begin{lemma}\label{lemma:fusion}
		Assume that $\bm{\alpha}\in\mc A_{N,M}$ and that all pairs of points in $\bm{z}$ are separated by some distance $h>0$ except for one pair $(z_1,z_2)$. Further assume that $\ps{\alpha_1+\alpha_2-Q,e_1}<0$. 
		Then as $z_1\to z_2$, for any positive $\eta$ there exists a positive constant $K$ such that, uniformly on $\delta,\eps,\rho$:
		\begin{enumerate}
			\item if $z_1,z_2\in\H$ then
			\begin{equation}\label{eq:fusion_hh}
				\ps{\V}_{\delta,\eps,\rho}\leq K \norm{z_1-z_2}^{-\ps{\alpha_1,\alpha_2}+\left(\frac{1}{2}\ps{\alpha_1+\alpha_2-Q,e_2}^2-\eta\right)\mathds{1}_{\ps{\alpha_1+\alpha_2-Q,e_2}>0}};
			\end{equation}
			\item if $z_1,z_2\in\R$ then
			\begin{equation}\label{eq:fusion_rr}
				\ps{\V}_{\delta,\eps,\rho}\leq K \norm{z_1-z_2}^{-\frac{\ps{\alpha_1,\alpha_2}}2+\left(\frac{1}{4}\left(\ps{\alpha_1+\alpha_2-Q,e_2}\right)^2-\eta\right)\mathds{1}_{\ps{\alpha_1+\alpha_2-Q,e_2}>0}};
			\end{equation}
			\item if $z_1\in\H$ while $z_2\in\R$ with in addition $\ps{\alpha_1-\frac Q2,e_1}<0$ then
			\begin{equation}\label{eq:fusion_hr}
            \begin{split}
                &\ps{\V}_{\delta,\eps,\rho}\leq K\times \norm{z_1-\bar z_1}^{-\frac{\norm{\alpha_1}^2}2+\left(\left(\ps{\alpha_1-\frac Q2,e_2}\right)^2-\eta\right)\mathds{1}_{\ps{\alpha_1-\frac Q2,e_2}>0}} \\
                & \norm{z_1-z_2}^{-\ps{\alpha_1,\alpha_2}+\left(\left(\ps{\alpha_1+\frac{\alpha_2}2-\frac Q2,e_2}\right)^2-\eta\right)\mathds{1}_{\ps{\alpha_1+\frac{\alpha_2}2-\frac Q2,e_2}>0}}.
            \end{split}
			\end{equation}
		\end{enumerate}
        As a consequence for $\beta$ such that $\ps{\beta,e_i}<0$ we have that $\norm{x-t}^{\frac{\ps{\beta,\gamma e_i}}2}F_i(x)$ is continuous at $x=t$, where $F_i$ is of the form
        $$
            F_i(x) = \Is^p\times\Ir^q \left[r_{i,\bm{i}}(x,\bm{x})\Psi_{i,\bm{i}}(x,\bm{x})\right]
        $$
        with $r_{i,\bm{i}}=r_{i,i_1,...,i_{(p+q)}}$ regular at $x=t$ and such that $r_{i,\bm{i}}(x,\bm{x})\Psi_{i,\bm{i}}(\bm{x})$ is integrable on $\Is^p\times\Ir^q$, while $\Psi_{i,\bm{i}}(x,\bm{x}) = \Psi_{i,i_1,...,i_{p+q}}(x,x_1,...,x_{p+q})$.
	\end{lemma}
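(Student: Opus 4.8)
The plan is to reduce all three estimates to the behaviour of Gaussian Multiplicative Chaos (GMC) masses near the merging insertions, working from the Girsanov-shifted representation \eqref{eq:reg correl shifted}. In that form the Coulomb-gas prefactor is already explicit, and it reproduces the leading power of each case: as $z_1\to z_2$ the pair $(z_1,z_2)$ contributes $\norm{z_1-z_2}^{-\ps{\alpha_1,\alpha_2}}$ for two bulk charges or for the bulk-boundary case, whereas for two boundary charges it contributes $\norm{z_1-z_2}^{-\ps{\alpha_1,\alpha_2}/2}$, the halving coming from the $\beta/2$ boundary coupling compensated by the factor-$2$ boundary Green function $G(s,s')=2G_{\hat\C}(s,s')$. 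In case (3) the combination of the Coulomb pair $(z_1,\bar z_1)$ with the diagonal self-energy term furnishes the mirror factor $\norm{z_1-\bar z_1}^{-\norm{\alpha_1}^2/2}$. It then remains to bound the residual GMC expectation, in which the Cameron-Martin drift $H=\sum_k\alpha_k G_{\hat\C}(\cdot,z_k)$ now carries two nearby singularities that, in the fusion region, combine into a single effective weight $\alpha_1+\alpha_2$.

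The core step is a scaling argument. I would split each GMC integral $\int Z_{i,\rho}\,\rho^{\gamma^2}e^{\ps{\gamma e_i,\X_\rho}}$ into a part supported on a ball $B(z_2,\Lambda\norm{z_1-z_2})$ around the fusion point and its complement; away from this ball the integrand is uniformly bounded and yields only harmless constants, while on the ball I would rescale lengths by $\norm{z_1-z_2}$. By the translation and scale covariance of $\X$, together with the local behaviour $Z_{i,\rho}\asymp\norm{x-z_2}^{-\ps{\gamma e_i,\alpha_1+\alpha_2}}$, the rescaled mass is then governed entirely by the effective weight $\alpha_1+\alpha_2$, and this is where the Seiberg bound in each root direction $e_i$ enters. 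When $\ps{\alpha_1+\alpha_2-Q,e_i}<0$ the rescaled GMC mass has bounded moments (by Kahane's convexity inequality and the standard GMC moment estimates), so no extra power is produced and the bound reduces to the Coulomb-gas factor. When instead $\ps{\alpha_1+\alpha_2-Q,e_i}>0$ the mass is supercritical, and one must invoke the reflection/freezing behaviour of correlated GMC: the tail of the mass against the singular weight produces the subleading power with the quadratic exponent $\tfrac12\ps{\alpha_1+\alpha_2-Q,e_2}^2$ in the bulk-bulk case, and the analogous square with coefficient $\tfrac14$ respectively $1$ in the boundary-boundary and bulk-boundary cases (the latter doubling reflecting the bulk charge interacting both directly and through its mirror, which also explains the two distinct arguments $\ps{\alpha_1-\tfrac Q2,e_2}$ and $\ps{\alpha_1+\tfrac{\alpha_2}2-\tfrac Q2,e_2}$ in \eqref{eq:fusion_hr}). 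The arbitrarily small loss $-\eta$ absorbs the polylogarithmic corrections inherent to these tail estimates, while the extra assumption $\ps{\alpha_1-\tfrac Q2,e_1}<0$ guarantees that the interaction of the bulk charge with its mirror stays subcritical, so that the $\norm{z_1-\bar z_1}$ factor is the sharp one.

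The main obstacle is exactly this supercritical regime: there the naive GMC mass diverges, and extracting the sharp quadratic exponent \emph{uniformly} in $\delta,\eps,\rho$ requires the precise tail and reflection estimates for correlated GMC measures rather than mere moment bounds. This is the step that truly imports the boundary input of \cite{fusion,ARSZ} and its bulk-Toda analogue \cite{Toda_OPEWV,Toda_correl2}. The vectorial, $\R^2$-valued nature of $\X$ does not alter the argument, since in the relevant scaling limit the two families of GMC measures $M_{\gamma e_1}$ and $M_{\gamma e_2}$ may be treated separately, one root direction at a time.

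Finally, for the stated consequence I would apply the boundary fusion estimate \eqref{eq:fusion_rr} to the colliding pair $(x,t)$: as $x\to t$ the insertion $V_{\gamma e_i}(x)$ fuses with $V_\beta(t)$ and produces the singular factor $\norm{x-t}^{-\ps{\beta,\gamma e_i}/2}$. Since $\ps{\beta,e_i}<0$, together with $\gamma<\sqrt2$ and the Seiberg bounds on $\beta$, the fused weight $\beta+\gamma e_i$ stays subcritical in both root directions (indeed $\ps{\beta+\gamma e_i-Q,e_i}=\ps{\beta,e_i}+\gamma-\tfrac2\gamma<0$), so the indicator in \eqref{eq:fusion_rr} never triggers and this is precisely the leading behaviour with no extra quadratic power. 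Multiplying $F_i(x)$ by $\norm{x-t}^{\ps{\beta,\gamma e_i}/2}$ therefore cancels the singularity, and passing the limit $x\to t$ inside the integral $\Is^p\times\Ir^q$ by dominated convergence — the domination being supplied by the same fusion bounds applied uniformly to the remaining insertions $\bm x$, which is exactly the integrability hypothesis on $r_{i,\bm i}\,\Psi_{i,\bm i}$ — yields the claimed continuity at $x=t$.
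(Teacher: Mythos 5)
Your reconstruction is faithful to what the paper actually does here, which is to delegate: the paper's own proof of the fusion estimates is a citation to the boundary Liouville case (\cite[Section 5]{fusion}, \cite[Lemma 2.4]{Cer_HEM}) and the bulk Toda case (\cite[Lemma 3.2]{Toda_OPEWV}), plus a single enabling remark. Your sketch — the Girsanov representation \eqref{eq:reg correl shifted} with the Coulomb prefactor supplying the leading exponents (including the boundary halving from $G(s,s')=2G_{\hat\C}(s,s')$ and the mirror factor $\norm{z_1-\bar z_1}^{-\norm{\alpha_1}^2/2}$ in case (3)), the ball decomposition and rescaling around the fusion point, and the subcritical/supercritical dichotomy in which the quadratic correction comes from tail estimates for the GMC mass against the singular weight — is an accurate account of the arguments those references run. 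Your treatment of the consequence also matches the paper's: pointwise continuity of $\norm{x-t}^{\ps{\beta,\gamma e_i}/2}r_{i,\bm{i}}(x,\bm x)\Psi_{i,\bm{i}}(x,\bm x)$ at $x=t$ (your subcriticality check $\ps{\beta+\gamma e_i-Q,e_i}=\ps{\beta,e_i}+\gamma-\frac{2}{\gamma}<0$ is correct), followed by dominated convergence, with the domination at infinity supplied by Lemma~\ref{lemma:inf_integrability_toda}.

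There is, however, one genuine gap, and it sits exactly at the only Toda-specific point of the proof. You assert that the vectorial nature of $\X$ ``does not alter the argument, since \dots the two families of GMC measures $M_{\gamma e_1}$ and $M_{\gamma e_2}$ may be treated separately, one root direction at a time.'' This is the claim that needs proof, not a triviality: the correlation function couples the two chaoses through $\exp\bigl(-\sum_{i}\mu_{B,i}M_{\gamma e_i}(\Heps)-\cdots\bigr)$, and the component fields $\ps{\X,e_1}$, $\ps{\X,e_2}$ are correlated, so a priori the tail/negative-moment estimate in the direction $e_2$ could be degraded by the simultaneous presence of the $e_1$-chaos. What legitimizes the direction-by-direction reduction — and what the paper explicitly singles out as the entire content of the adaptation — is that $\ps{e_1,e_2}=-1<0$: the two GFFs are \emph{negatively} correlated, which permits a Gaussian comparison (Kahane's convexity inequality, as in \cite[Lemma 3.2]{Toda_OPEWV}) dominating the joint law by the independent one, after which the scalar Liouville estimates apply to each root direction. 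Your sketch should invoke this explicitly; as written, the one-sentence dismissal skips the crux. A secondary, smaller omission: in the continuity statement the variable $x$ can collide not only with $t$ but with the $\Is^p$ integration variables, which live in nested neighbourhoods of $t$, so your dominated-convergence step requires fusion bounds uniform in those variables — this is precisely where the hypothesis that $r_{i,\bm{i}}\Psi_{i,\bm{i}}$ is integrable on $\Is^p\times\Ir^q$ does real work, and it deserves more than a parenthetical.
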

    \begin{proof}
        The first part of the claim (fusion estimates) follows from the counterparts statements in the boundary Liouville case~\cite[Section 5]{fusion} (see also~\cite[Lemma 2.4]{Cer_HEM}) and bulk Toda theory~\cite[Lemma 3.2]{Toda_OPEWV} (this adaptation is made possible since the two GFFs $\ps{\X,e_i}$ that appear in the definitions of the GMC measures are negatively correlated since $\ps{e_1,e_2}<0$). As for the second part (continuity at $x=t$) we use the probabilistic representation to infer that $\norm{x-t}^{\frac{\ps{\beta,\gamma e_i}}2}r_{i,\bm{i}}(x,\bm{x})\Psi_{i,\bm{i}}(x,\bm{x})$ is continuous at $x=t$. To deduce continuity of $\norm{x-t}^{\frac{\ps{\beta,\gamma e_i}}2}F_i$ from this fact we then rely on the integrability properties of the correlation functions in the form of Lemma~\ref{lemma:inf_integrability_toda}.
    \end{proof}
    
    \subsubsection{A class of remainder terms}
    In the definition of the descendant fields as described above, we need to take a limit of a regularized quantity to which we have substracted a remainder term. In order for this procedure to define a meaningful object, we introduce a class of remainder terms that would satisfy the properties we expect from a definition of the descendant fields:
	\begin{defi}
		We will say that a regularized quantity $F_{\delta,\eps,\rho}(\bm\alpha)$ has the $(P)$ property or is $(P)$-class when it satisfies the following assumptions:
		\begin{enumerate}
			\item for any $\bm\alpha\in\mc A_{N,M}$, the limit $F(\bm\alpha)\coloneqq\lim\limits_{\delta,\eps,\rho\to0}F_{\delta,\eps,\rho}(\bm\alpha)$ exists and is finite as $\rho$, $\eps$ and then $\delta\to0$;
			\item the map $\bm\alpha\mapsto F(\bm\alpha)$ is analytic in a complex neighborhood of $\mc A_{N,M}$.
		\end{enumerate}
	\end{defi}
	Let us now provide some examples of such quantities. And to start with we stress that the correlation functions themselves satisfy this property:
	\begin{lemma}\label{lemma:ana_correl}
		The regularized correlation functions $\ps{\prod_{k=1}^NV_{\alpha_k}(z_k)\prod_{l=1}^MV_{\beta_l}(s_l)}_{\delta,\eps,\rho}$ are $(P)$-class.
	\end{lemma}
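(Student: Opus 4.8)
The plan is to verify the two clauses of the $(P)$ property separately: existence and finiteness of the limit, which is essentially the content of the construction in \cite{CH_construction}, and analyticity, which I would obtain through a normal-families (Vitali) argument built on the shifted representation \eqref{eq:reg correl shifted}.

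For the first item I would simply recall that, under the Seiberg bounds defining $\mc A_{N,M}$ together with $\mu_{B,i}>0$ and $\Re\mu_{i,l}\ge0$, the regularized correlation functions converge as $\rho\to0$, then $\eps\to0$, then $\delta\to0$ to a finite limit. This is \cite[Theorem 4.6]{CH_construction}, the extension to complex-valued boundary cosmological constants being handled exactly as in \cite{Cer_HEM}; removal of the $\eps$-cutoffs around the insertions uses the individual bounds $\ps{\alpha_k-Q,e_i}<0$ and the integrability controls of Lemma~\ref{lemma:inf_integrability_toda} and Lemma~\ref{lemma:fusion}. I would not reprove this.

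The substance is in the analyticity. Working from \eqref{eq:reg correl shifted}, the dependence on $\bm\alpha$ is explicit and holomorphic: the prefactor $C(\bm z,\bm\alpha)$ and the factors $Z_{i,\rho},Z^\partial_{i,\rho}$ are powers whose exponents are affine in $\bm\alpha$, while the zero-mode integrand carries $e^{\ps{\bm s,\bm c}}$ with $\bm s=\sum_k\alpha_k+\frac12\sum_l\beta_l-Q$ affine in $\bm\alpha$, and crucially the GMC potential itself does not depend on $\bm\alpha$. Hence for every fixed realization and every fixed $\bm c$ the integrand is entire in $\bm\alpha$. Since $\mu_{B,i}\ge0$ and $\Re\mu_{i,l}\ge0$, the modulus of the potential exponential is bounded by $1$, which—combined with the $\bm c$-decay discussed next—lets me differentiate under the expectation and the $d\bm c$ integral (Wirtinger derivative, or Morera plus Fubini), so that each $F_{\delta,\eps,\rho}$ is holomorphic on a fixed complex neighborhood $U$ of $\mc A_{N,M}$.

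The heart of the argument, and the point I expect to be the main obstacle, is a bound on $|F_{\delta,\eps,\rho}(\bm\alpha)|$ that is uniform both over $\bm\alpha$ in a compact $\mc K\subset U$ and over small $\delta,\eps,\rho$. I would obtain it by splitting the zero-mode integral over $\R^2$ into sectors according to whether each $e^{\ps{\gamma e_i,\bm c}}$ is large or bounded: where a potential is large, the super-exponential factor $\exp(-\mu_{B,i}e^{\ps{\gamma e_i,\bm c}}A_i)$ dominates any growth of $e^{\ps{\Re\bm s,\bm c}}$, using $\mu_{B,i}>0$ and the almost sure positivity of the GMC mass $A_i$; on the complementary sectors integrability comes from the Seiberg inequalities $\Re\ps{\bm s,\omega_i}>0$, which are open conditions and so persist for $\Im\bm\alpha$ small. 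The finiteness of the GMC moments surviving the $\bm c$-integration is controlled uniformly in the regularization by Lemma~\ref{lemma:inf_integrability_toda} and Lemma~\ref{lemma:fusion}, as in \cite{ARSZ,Toda_correl2}. Once this uniform bound is in hand, the family $\{F_{\delta,\eps,\rho}\}$ is locally uniformly bounded and holomorphic on $U$, so Vitali's theorem applied along the pointwise limit from the convergence step yields that $F$ is holomorphic on $U$, i.e. analytic in a complex neighborhood of $\mc A_{N,M}$. This shows that the regularized correlation functions are $(P)$-class.
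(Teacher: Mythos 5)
The paper gives no standalone proof of this lemma---it explicitly defers to \cite[Theorem 4.6]{CH_construction} (plus \cite{Cer_HEM} for complex $\mu_{i,l}$) for convergence of the regularized correlation functions, and to the analyticity arguments of \cite{ARSZ} and \cite[Section 2.2]{Toda_correl2}---and your scheme (holomorphy of each $F_{\delta,\eps,\rho}$ in a fixed complex neighborhood, a uniform-in-$\delta,\eps,\rho$ bound via sector-splitting of the zero-mode integral under the open Seiberg conditions, then Vitali on the totally real set $\mc A_{N,M}$) is exactly that standard argument, so the proposal is correct and takes essentially the same route. One small correction: carry out the holomorphy and modulus-bounded-by-$1$ step in the unshifted representation \eqref{eq:reg correl}, where the potential is genuinely $\bm\alpha$-independent and $\Re\mu_{i,l}\ge0$, $\mu_{B,i}>0$ give $\lvert e^{-\text{potential}}\rvert\le1$; in the shifted form \eqref{eq:reg correl shifted} the Girsanov transform places the $\bm\alpha$-dependent factors $Z_{i,\rho}$, $Z^\partial_{i,\rho}$ \emph{inside} the potential, so your statement that the GMC potential does not depend on $\bm\alpha$ (and the ensuing bound by $1$) fails there once the weights are complexified.
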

	Likewise some singular integrals (that will later on enter the definition of the descendants) satisfy this property:
	\begin{lemma}~\label{lemma:fusion_integrability}
		For any $i$ and $j$ in $\{1,2\}$ the following integrals are $(P)$-class:
		\begin{equation}\label{eq:fusion_int}
			\begin{split}
				&\int_{\frac12\D\times(\D\setminus\frac12\D)}\frac1{y-x}\ps{V_{\gamma e_i}(x+i)V_{\gamma e_j}(y+i)\V}_{\delta,\eps,\rho}d^2xd^2y,\\
				&\int_{-1}^0\int_0^1\frac1{y-x}\ps{V_{\gamma e_i}(x)V_{\gamma e_j}(y)\V}_{\delta,\eps,\rho}dxdy\quad\text{and}\\
				&\int_{\D\cap\H}\int_1^2\frac1{y-x}\ps{V_{\gamma e_i}(x)V_{\gamma e_j}(y)\V}_{\delta,\eps,\rho}d^2xdy.
			\end{split}	
		\end{equation}
    In particular integrals of the form 
    \begin{equation*}
        \Is\times\Ir\left[\frac{1}{x-y}\Psi_{i,j}(x,y)\right]
    \end{equation*}
    are $(P)$-class.
	\end{lemma}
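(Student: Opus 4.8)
The plan is to establish the three displayed model integrals directly, and then to obtain the general $\Is\times\Ir$ statement as a consequence. Recall that being \good amounts to two requirements: convergence of the regularized quantity as $\rho,\eps,\delta\to0$, and analyticity of the limit in a complex neighbourhood of $\mc A_{N,M}$. In each of the three integrals the integrand is the product of the explicit weight $\frac1{y-x}$ with a regularized correlation function carrying two auxiliary insertions $V_{\gamma e_i}$ and $V_{\gamma e_j}$, and the only source of trouble is the locus where these two insertions collide: the interface $\norm{x}=\norm{y}=\frac12$ for the first integral, the corner $x=y=0$ for the second, and the corner $x=y=1$ (where in addition the bulk point hits the real axis) for the third.

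First I would prove convergence by exhibiting a dominating function that is integrable uniformly in $\delta,\eps,\rho$. Away from the collision locus $\frac1{y-x}$ is bounded and the correlation functions are uniformly controlled by Lemma~\ref{lemma:inf_integrability_toda}, so the only genuine point is integrability near the collision. There I apply the relevant fusion estimate from Lemma~\ref{lemma:fusion}: the bulk--bulk bound \eqref{eq:fusion_hh} for the first integral, the boundary--boundary bound \eqref{eq:fusion_rr} for the second, and the bulk--boundary bound \eqref{eq:fusion_hr} for the third. Each gives $\ps{V_{\gamma e_i}(x)V_{\gamma e_j}(y)\V}_{\delta,\eps,\rho}\leq K\norm{x-y}^{E}$ with $E$ the corresponding fusion exponent, so the integrand is dominated by $\norm{x-y}^{E-1}$; a short computation with the explicit weights $\alpha_1=\gamma e_i$, $\alpha_2=\gamma e_j$ shows that $E-1$ lies above the integrability threshold of the collision locus for every $\gamma\in(0,\sqrt2)$. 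With the dominating function in hand, pointwise convergence of the regularized correlation functions off the collision and coincidence loci (which holds by the construction of \cite{CH_construction}) together with dominated convergence yields existence and finiteness of the limit.

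For analyticity I would note that the integrand depends analytically on $\bm\alpha$ by Lemma~\ref{lemma:ana_correl}, and that the dominating function can be chosen locally uniformly over a complex neighbourhood of $\mc A_{N,M}$, since the fusion exponents vary continuously in the weights and hence the integrability margin survives on a small enough neighbourhood. Morera's theorem applied under the integral sign, exactly as in \cite{ARSZ, Toda_correl2}, then shows the limit is holomorphic. Finally, the general integral $\Is\times\Ir\left[\frac1{x-y}\Psi_{i,j}(x,y)\right]$ reduces to the three model cases by splitting its domain: on the part where the two variables can approach a common point, a translation and rescaling centred at that point brings the integral into the form of one of the three models, the extra fixed insertion $V_\beta(t)$ being harmless because $t$ stays at positive distance from the relevant collision locus; on the complementary part $\frac1{x-y}$ is a bounded analytic weight and the integral is \good by the same dominated-convergence and Morera argument, now using Lemma~\ref{lemma:inf_integrability_toda} to control the tail of $\Ir$ and the fusion estimates to control the approach of the free variable to the fixed insertions of $\V$.

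The main obstacle is the integrability verification at the collision locus. When $i\neq j$ one has $\ps{\gamma e_i,\gamma e_j}=-\gamma^2<0$ and integrability is immediate, but when $i=j$ the naive exponent is integrable only for $\gamma<1$, and extending to the whole range $\gamma\in(0,\sqrt2)$ forces one to retain the positive quadratic correction term in the fusion estimates of Lemma~\ref{lemma:fusion} (with prefactor $\frac12$ in the bulk case and $\frac14$ in the boundary case). Checking that this correction is exactly large enough — the margin being tightest near $\gamma=1$ — is the delicate computational heart of the argument; everything else is a routine application of dominated convergence and differentiation under the integral sign.
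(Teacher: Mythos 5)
Your proposal is correct and is essentially the proof the paper intends: the paper omits the argument, remarking that these technical estimates follow the boundary Liouville and bulk Toda cases of~\cite{fusion,ARSZ,Toda_OPEWV,Toda_correl2} ``in the exact same way'', which is precisely your combination of the fusion bounds of Lemma~\ref{lemma:fusion} (uniform in $\delta,\eps,\rho$, retaining the quadratic correction term in the $i=j$ case for $\gamma\geq1$), the decay at infinity of Lemma~\ref{lemma:inf_integrability_toda}, dominated convergence, and a Morera-type argument over a complex neighbourhood of $\mc A_{N,M}$ for analyticity. Your integrability margins also check out quantitatively, e.g.\ in the boundary--boundary corner the corrected exponent is $\frac54\gamma^2+\gamma^{-2}-3>-1$ for all $\gamma\in(0,\sqrt2)$, with the tightest margin at $\gamma^2=2/\sqrt5$, consistent with your remark that the delicate point sits near $\gamma=1$.
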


    \section{Definition of the descendant fields and Ward identities}
	This section is dedicated to providing a definition of the descendant fields in the case where only one current is involved, that is to say we will provide a meaning to the quantities $\L_{-n}V_\beta$ and $\Wb_{-m}V_\beta$ for any positive integers $n,m$ and for suitable $\beta$ (that is $\beta\in Q+\wc$).	We will define such descendants for boundary Vertex Operators, the bulk case being treated via the same arguments without the technicalities that appear in the boundary case.
	
	To define such descendants we will make sense of them within correlation functions, that is we will make sense of the quantities
	\begin{equation*}
		\ps{\L_{-n}V_\beta(t)\prod_{k=1}^NV_{\alpha_k}(z_k)\prod_{l=1}^MV_{\beta_l}(s_l)}\qt{and}\ps{\Wb_{-n}V_\beta(t)\prod_{k=1}^NV_{\alpha_k}(z_k)\prod_{l=1}^MV_{\beta_l}(s_l)}
	\end{equation*}
	where $(\beta,\bm\alpha)\in\mc A_{N,M+1}$ and with $t\in\partial\H=\R$.

	\subsection{Descendant fields at the first order}
	To start with we will define the descendant associated to Virasoro and higher-spin symmetry at the level one. These are the easiest ones to deal with but their definition still relies on the type of argument that we will use in the more general case considered in this paper.	
	
	\subsubsection{Definition of the descendant fields}
	In constrast with the closed case, defining the descendant fields in the presence of a boundary requires some extra care. In particular in order to define them we need to first make sense of them at the regularized level and then take an appropriate limit as in the closed case, but in addition to that we need to substract some additional terms that may blow up and whose presence comes from the fact that the surface considered has a boundary. For this purpose let us introduce the following quantities.	
	\begin{lemma}\label{lemma:desc1}
		For any $i$ in $\{1,2\}$ set 
		\begin{equation*}
			\begin{split}
				&\mathfrak{L}^i_{-1,\delta,\eps,\rho}(\bm\alpha)\coloneqq \Is\left[\partial_x \ps{V_{\gamma e_i}(x)V_\beta(t)\V}_{\delta,\eps,\rho}\right].
			\end{split}
		\end{equation*}
		Then the following quantity is $(P)$-class:
		\begin{equation*}
			\begin{split}
				&\ps{\L_{-1}V_\beta(t)\prod_{k=1}^NV_{\alpha_k}(z_k)\prod_{l=1}^MV_{\beta_l}(s_l)}_{\delta,\eps,\rho}-\sum_{i=1}^2\mathfrak{L}^i_{-1,\delta,\eps,\rho}(\bm\alpha).
			\end{split}
		\end{equation*}
		In the same fashion, let us define for $i=1,2$ 
		\begin{equation*}
			\begin{split}
				&\mathfrak{W}_{-1,\delta,\eps,\rho}^i(\bm\alpha)\coloneqq -h_2(e_i)\left(q-2\omega_{\hat i}(\beta)\right)\mathfrak{L}^i_{-1,\delta,\eps,\rho}(\bm\alpha).
			\end{split}
		\end{equation*}
		Then the following quantity is $(P)$-class:
		\begin{equation*}
			\begin{split}
				&\ps{\Wb_{-1}V_\beta(t)\prod_{k=1}^NV_{\alpha_k}(z_k)\prod_{l=1}^MV_{\beta_l}(s_l)}_{\delta,\eps,\rho}-\sum_{i=1}^2\mathfrak{W}^i_{-1,\delta,\eps,\rho}(\bm\alpha).
			\end{split}
		\end{equation*}
	\end{lemma}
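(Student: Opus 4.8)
The plan is to handle both descendants at once by observing that, at level one, $\L_{-1}V_\beta(t)$ and $\Wb_{-1}V_\beta(t)$ are \emph{linear} in the first derivative $\partial\Phi(t)$: from~\eqref{eq:vir_desc} and~\eqref{eq:W_desc} each can be written as $\ps{v,\partial\Phi(t)}\,V_\beta(t)$, with $v=\beta$ in the Virasoro case and, in the higher-spin case, with $v=v_\beta$ the vector determined by $\ps{v_\beta,w}=-qB(\beta,w)-2C(\beta,\beta,w)$. In particular a single application of the Gaussian integration by parts formula of Lemma~\ref{lemma:GaussianIPP} (with $p=1$) suffices, and no Wick contractions need to be unraveled. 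After letting $\rho\to0$ this expresses the regularized descendant as a finite part $\sum_{k}\frac{\ps{v,\alpha_k}}{2(z_k-t)}\ps{V_\beta(t)\V}_{\eps,\delta}$ over the remaining insertions, plus integral contributions $\It\big[\tfrac{\ps{v,\gamma e_i}}{2(x-t)}\Psi_i(x)\big]$ coming from the bulk and boundary GMC measures, where $\Psi_i(x)=\ps{V_{\gamma e_i}(x)V_\beta(t)\V}_{\eps,\delta}$ as in~\eqref{eq:def_Psi}.

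First I would dispose of the regular pieces. The finite sum over the $z_k$ is a linear combination of correlation functions with frozen insertions, hence $(P)$-class by Lemma~\ref{lemma:ana_correl}. Writing $\It=\Ir+\Is$, the away-from-$t$ part $\Ir\big[\tfrac{\ps{v,\gamma e_i}}{2(x-t)}\Psi_i\big]$ is $(P)$-class as well: its integrand is smooth on $\Ir$, integrability near the other insertions and at infinity is supplied by Lemmas~\ref{lemma:fusion} and~\ref{lemma:inf_integrability_toda}, and joint analyticity in $\bm\alpha$ follows exactly as in Lemma~\ref{lemma:fusion_integrability}. Everything therefore reduces to comparing the singular contribution $\Is\big[\tfrac{\ps{v,\gamma e_i}}{2(x-t)}\Psi_i\big]$ with the prescribed quantities $\mathfrak{L}^i_{-1}=\Is[\partial_x\Psi_i]$ and $\mathfrak{W}^i_{-1}=-h_2(e_i)\big(q-2\omega_{\hat i}(\beta)\big)\mathfrak{L}^i_{-1}$.

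The heart of the argument is a comparison of leading singularities at $x=t$. By the second part of Lemma~\ref{lemma:fusion}, $\norm{x-t}^{\frac12\ps{\beta,\gamma e_i}}\Psi_i(x)$ extends continuously to $x=t$, so as $x\to t$ one has $\Psi_i(x)\sim C_i\,\norm{x-t}^{-a_i}$ with $a_i=\tfrac12\ps{\beta,\gamma e_i}$, whence $\partial_x\Psi_i\sim -a_i\,\frac{\Psi_i}{x-t}$. Crucially, this constant $a_i$ is produced by the same pairwise $V_\beta(t)$--$V_{\gamma e_i}(x)$ interaction that generates the integration by parts coefficient, so that $\Is\big[\partial_x\Psi_i+a_i\tfrac{\Psi_i}{x-t}\big]$ has only a subleading, integrable singularity at $t$ (again controlled by Lemma~\ref{lemma:fusion}) and is therefore $(P)$-class. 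For the Virasoro descendant the choice $v=\beta$ makes the integration by parts coefficient equal to $\tfrac12\ps{\beta,\gamma e_i}=a_i$, so $\Is\big[\tfrac{\ps{\beta,\gamma e_i}}{2(x-t)}\Psi_i\big]$ and $\sum_i\mathfrak{L}^i_{-1}$ coincide up to a $(P)$-class term, which proves the first claim. Equivalently, applying Stokes' formula to $\mathfrak{L}^i_{-1}=\Is[\partial_x\Psi_i]$ isolates its divergent boundary part at $t\pm\eps$, which is precisely the remainder $\tilde{\mathfrak{L}}_{-1,\delta,\eps,\rho}$ singled out in Subsection~\ref{subsec:SET}, the complementary boundary terms at fixed distance $r$ being $(P)$-class.

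For the higher-spin descendant the identical reduction leaves the singular integral $\Is\big[\tfrac{\ps{v_\beta,\gamma e_i}}{2(x-t)}\Psi_i\big]$, which by the asymptotics above equals a definite scalar multiple of $\mathfrak{L}^i_{-1}$ modulo a $(P)$-class term. Matching this scalar against the coefficient prescribed in $\mathfrak{W}^i_{-1}$ reduces the whole statement to the algebraic identity
\[
-qB(\beta,e_i)-2C(\beta,\beta,e_i)=-h_2(e_i)\big(q-2\omega_{\hat i}(\beta)\big)\ps{\beta,e_i},\qquad i=1,2,
\]
which, separating its part linear in $q$ from its part quadratic in $\beta$, amounts to the two relations $B(\beta,e_i)=h_2(e_i)\ps{\beta,e_i}$ and $C(\beta,\beta,e_i)=-h_2(e_i)\,\omega_{\hat i}(\beta)\ps{\beta,e_i}$. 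I expect this coefficient matching to be the main obstacle: it is automatic in the Virasoro case but genuinely new in the higher-spin case, being the only place where the $\sl_3$ weight data enters, and it is verified by a short computation using $h_1+h_2+h_3=0$, the relation $h_2-h_1=-e_1$ (and its $1\leftrightarrow 3$, $e_1\leftrightarrow e_2$ image), $-h_3=\omega_2$, and $\ps{\omega_j,e_i}=\delta_{ij}$. Once the identity is in hand, $\ps{\Wb_{-1}V_\beta(t)\V}_{\delta,\eps,\rho}-\sum_i\mathfrak{W}^i_{-1,\delta,\eps,\rho}$ is $(P)$-class, completing the proof; every other ingredient is a direct adaptation of the Liouville and bulk-Toda arguments.
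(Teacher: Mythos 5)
Your skeleton matches the paper's: Gaussian integration by parts at $p=1$ (both level-one descendants being linear in $\partial\Phi(t)$), the split $\It=\Is+\Ir$ with the frozen-insertion sum and the $\Ir$ part disposed of as $(P)$-class, and the reduction of the $\Wb$ case to the $\L$ case via the coefficient identity $\Wb_{-1}^\beta(\gamma e_i)=-h_2(e_i)\left(q-2\omega_{\hat i}(\beta)\right)\ps{\beta,\gamma e_i}$ --- your two relations $B(\beta,e_i)=h_2(e_i)\ps{\beta,e_i}$ and $C(\beta,\beta,e_i)=-h_2(e_i)\omega_{\hat i}(\beta)\ps{\beta,e_i}$ are exactly this identity, and they check out. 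But you have misidentified the main obstacle: in the paper this algebraic matching is a one-line verification, and the genuine work is in the step you shortcut.

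The gap is your treatment of the singular integral. You infer $\partial_x\Psi_i\sim -a_i\,\Psi_i/(x-t)$ from the continuity of $\norm{x-t}^{a_i}\Psi_i(x)$ and conclude that $\Is\bigl[\partial_x\Psi_i+a_i\tfrac{\Psi_i}{x-t}\bigr]$ is $(P)$-class, ``controlled by Lemma~\ref{lemma:fusion}''. Neither step is valid: a continuity (or upper-bound) statement for $\Psi_i$ cannot be differentiated, and Lemma~\ref{lemma:fusion} bounds correlation functions, not renormalized derivatives. The paper instead uses the \emph{exact} identity for the derivative, obtained by Girsanov applied to $\Psi_i$ itself,
\begin{equation*}
\partial_x\Psi_i(x)=\left(\frac{\ps{\beta,\gamma e_i}}{2(t-x)}+\frac{\ps{\gamma e_i,\gamma e_i}}{2(\bar x-x)}+\sum_{k=1}^{2N+M}\frac{\ps{\gamma e_i,\alpha_k}}{2(z_k-x)}\right)\Psi_i(x)-\It\left[\frac{\ps{\gamma e_i,\gamma e_j}}{2(y-x)}\Psi_{i,j}(x,y)\right],
\end{equation*}
and the extra terms it produces are not harmless error terms. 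In particular the two-fold term $\tfrac{1}{y-x}\Psi_{i,j}(x,y)$ is individually non-integrable across the diagonal (fusion gives $\Psi_{i,i}\sim\norm{x-y}^{-\gamma^2}$ on the boundary, times $\norm{x-y}^{-1}$): in the paper its contribution over $\Is\times\Is$ vanishes \emph{only} by the antisymmetry of the kernel under $x\leftrightarrow y$, and the surviving cross terms over $\Is\times\Ir$ are $(P)$-class precisely by Lemma~\ref{lemma:fusion_integrability}, which exists for this purpose. Your proposal never generates the $\Psi_{i,j}$ terms, so it cannot perform this cancellation, and the ``subleading, integrable singularity'' claim fails as stated (note also that the continuity statement you invoke holds only for $\ps{\beta,e_i}<0$; the full range of $\mc A_{N,M+1}$ is reached afterwards through the analyticity built into the $(P)$-class property, as in the proof of Proposition~\ref{prop:L1_der}).
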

	\begin{proof}
		Fix $\delta,\eps$ to be positive and set $\Desc$ to be either $\L$ or $\Wb$.
		Using Gaussian integration by parts we have
		\begin{equation*}
			\begin{split}
				\ps{\Desc_{-1}V_\beta(t)\V}_{\delta,\eps} = &\sum_{k=1}^{2N+M} \frac{\Desc_{-1}^\beta(\alpha_k)}{2(z_k-t)} \ps{V_\beta(t)\V}_{\delta,\eps} - \sum_{i=1}^2 \int_{\R_\eps} \frac{\Desc_{-1}^\beta(\gamma e_i)}{2(s-t)} \ps{V_{\gamma e_i}(s)V_\beta(t)\V}_{\delta,\eps} \mu_{i}(ds)\\
				&-\sum_{i=1}^2\mu_{B,i} \int_{\H_{\delta,\eps}} \left( \frac{\Desc_{-1}^\beta(\gamma e_i)}{2(x-t)}+\frac{\Desc_{-1}^\beta(\gamma e_i)}{2(\bar{x}-t)}\right) \ps{V_{\gamma e_i}(x)V_\beta(t)\V}_{\delta,\eps} d^2x.
			\end{split}
		\end{equation*}
		As $\eps,\delta\to0$ the first term in this expansion is $(P)$-class in agreement with Lemma~\ref{lemma:ana_correl}.
		As for the integrals, we see that thanks to Lemma~\ref{lemma:fusion} the integrals over the subdomains $\Reps^c$ and $\Heps^c$ are $(P)$-class so that 
		\begin{align*}
			\ps{\Desc_{-1}V_\beta(t)\V}_{\delta,\eps} = \text{$(P)$-class terms}& -\Is \left[\frac{\Desc_{-1}^\beta(\gamma e_i)}{2(x-t)} \ps{V_{\gamma e_i}(x)V_\beta(t)\V}_{\delta,\eps}\right].
		\end{align*}
		To be more explicit the $(P)$-class terms in the above are explicitly given by
		\begin{align*}
			\sum_{k=1}^{2N+M} \frac{\Desc_{-1}^\beta(\alpha_k)}{2(z_k-t)} \ps{V_\beta(t)\V}_{\delta,\eps}-\Ir \left[\frac{\Desc_{-1}^\beta(\gamma e_i)}{2(x-t)} \ps{V_{\gamma e_i}(x)V_\beta(t)\V}_{\delta,\eps}\right].
		\end{align*}
		
		To treat the remaining terms we first note that we have the following explicit equalities (which can be checked based on explicit computations):
		\begin{equation*}
			\L_{-1}^\beta(\gamma e_i)=\ps{\beta,\gamma e_i},\quad\Wb_{-1}^\beta(\gamma e_i)=-h_2(e_i)\left(q-2\omega_{\hat i}(\beta)\right)\ps{\beta,\gamma e_i}.
		\end{equation*}
		In particular in both cases the ratio $\frac{\Desc_{-1}^\beta(\gamma e_i)}{\ps{\beta,\gamma e_i}}$ is well-defined for any $\beta$ in $\C^2$. This allows to rewrite the remaining integrals as
		\begin{equation*}
			\begin{split}
				\frac{\Desc_{-1}^\beta(\gamma e_i)}{\ps{\beta,\gamma e_i}}\Is \left[\frac{\ps{\beta,\gamma e_i}}{2(x-t)} \ps{V_{\gamma e_i}(x)V_\beta(t)\V}_{\delta,\eps}\right].
			\end{split}
		\end{equation*}
		We recognize there derivatives of correlation functions in that
		\begin{align*}
			\partial_x \ps{V_{\gamma e_i}(x)V_\beta(t)\V}_{\delta,\eps}=&\left(\frac{\ps{\beta,\gamma e_i}}{2(t-x)}+\frac{\ps{\gamma e_i,\gamma e_i}}{2(\bar x-x)} + \sum_{k=1}^{2N+M} \frac{\ps{\gamma e_i,\alpha_k}}{2(z_k-x)}\right)\ps{V_{\gamma e_i}(x)V_\beta(t)\V}_{\delta,\eps}\\
			&- \It \left[\frac{\ps{\gamma e_i,\gamma e_j}}{2(y-x)} \ps{V_{\gamma e_i}(x)V_{\gamma e_j}(y) V_\beta(t) \V}_{\delta,\eps}\right]
		\end{align*}
		and likewise for $\partial_{\bar x}$ and $\partial_s$, where the integral is over $y$. 
		This shows that
		\begin{align*}
			&\Is\left[\frac{\ps{\beta,\gamma e_i}}{2(x-t)}\ps{V_{\gamma e_i}(x)V_\beta(t)\V}_{\delta,\eps}\right]=\Is\left[\left(\partial_x+\sum_{k=1}^N\frac{\ps{\alpha_k,\gamma e_i}}{2(x-z_k)}\right)\ps{V_{\gamma e_i}(x)V_\beta(t)\V}_{\delta,\eps}\right]\\
			&+\sum_{i,j=1}^2\mu_{B,i}\mu_{B,j}\int_{ \Heps^1\times\Heps}\left(\frac{\ps{\gamma e_i,\gamma e_j}}{2(x-y)}+\frac{\ps{\gamma e_i,\gamma e_j}}{2(x-\bar y)}+\frac{\ps{\gamma e_i,\gamma e_j}}{2(\bar x-y)}+\frac{\ps{\gamma e_i,\gamma e_j}}{2(\bar x-\bar y)}\right)\ps{V_{\gamma e_i}(x)V_{\gamma e_j}(y)\V}_{\delta,\eps}d^2xd^2y\\
			&+\sum_{i,j=1}^2\mu_{B,i}\int_{ \Heps^1\times\Reps}\left(\frac{\ps{\gamma e_i,\gamma e_j}}{2(x-y)}+\frac{\ps{\gamma e_i,\gamma e_j}}{2(\bar x-y)}\right)\ps{V_{\gamma e_i}(x)V_{\gamma e_j}(y)\V}_{\delta,\eps}d^2x\mu_{j}(dy)\\
			&+\sum_{i,j=1}^2\mu_{B,i}\int_{ \Heps\times\Reps^1}\left(\frac{\ps{\gamma e_i,\gamma e_j}}{2(y-x)}+\frac{\ps{\gamma e_i,\gamma e_j}}{2(y-\bar x)}\right)\ps{V_{\gamma e_i}(x)V_{\gamma e_j}(y)\V}_{\delta,\eps}d^2x\mu_{j}(dy)\\
			&+\sum_{i,j=1}^2\int_{ \Reps^1\times\Reps}\frac{\ps{\gamma e_i,\gamma e_j}}{2(x-\bar y)}\ps{V_{\gamma e_i}(x)V_{\gamma e_j}(y)\V}_{\delta,\eps}\mu_{i}(dx)\mu_{j}(dy).
		\end{align*}
		Now by symmetry in the variables $x$ and $y$ the diagonal part of the integral over $\Heps\times\Heps$ (that is the subintegral over $\Heps^1\times\Heps^1$) vanishes so that only the integral over $\Heps^1\times\Heps^c$ remains. Thanks to Lemma~\ref{lemma:fusion_integrability} we know that this integral is $(P)$-class. The same argument applies to the remaining integrals, whose sum is actually non-zero only over $\Heps^1\times\Reps^c$, $\Heps^c\times\Reps^1$ and $\Reps^c\times\Reps^c$. Recollecting term we obtain that
		\begin{align*}
			\ps{\Desc_{-1}V_\beta(t)\V}_{\delta,\eps} = &\text{$(P)$-class terms} + \Is\left[\frac{\Desc_{-1}^\beta(\gamma e_i)}{\ps{\beta,\gamma e_i}}\partial_x \ps{V_{\gamma e_i}(x)V_\beta(t)\V}_{\delta,\eps}\right].
		\end{align*}
		This concludes for the proof of Lemma~\ref{lemma:desc1}.
	\end{proof}
	
	Based on this observation we now would like to define the descendant fields $\L_{-1}V_\beta$ and $\Wb_{-1}V_\beta$. However in order to remove the dependence with respect to the radius $r$ and as explained in the previous section we actually need to remove one additional term. In this perspective let us recall that we have introduced there the notations $\tilde{\mathfrak{L}}_{-1,\delta,\eps,\rho}(\bm\alpha)$ and $\tilde{\mathfrak{W}}_{-1,\delta,\eps,\rho}(\bm\alpha)$ to discard such terms. Here we see that more explicitly the remainder term is given by (up to a $o(1)$ term)
	\begin{equation*}
		\begin{split}
            &\tilde{\mathfrak{L}}_{-1,\delta,\eps,\rho}(\bm\alpha)=\sum_{i=1}^2\tilde{\mathfrak{L}}_{-1,\delta,\eps,\rho}^i(\bm\alpha)\qt{and}\tilde{\mathfrak{W}}_{-1,\delta,\eps,\rho}(\bm\alpha)=\sum_{i=1}^2\tilde{\mathfrak{W}}_{-1,\delta,\eps,\rho}^i(\bm\alpha),\text{ where}\\
			&\tilde{\mathfrak{L}}^i_{-1,\delta,\eps,\rho}(\bm\alpha)=\left(\mu_{L,i}\ps{V_{\gamma e_i}(t-\eps)\V}_{\delta,\eps}-\mu_{R,i}\ps{V_{\gamma e_i}(t+\eps)\V}_{\delta,\eps}\right)\qt{while}\\
            &\tilde{\mathfrak{W}}_{-1,\delta,\eps,\rho}^i(\bm\alpha)\coloneqq -h_2(e_i)\left(q-2\omega_{\hat i}(\beta)\right)\tilde{\mathfrak{L}}^i_{-1,\delta,\eps,\rho}(\bm\alpha).
		\end{split}
	\end{equation*}
	Note that $\mathfrak{L}^i_{-1,\delta,\eps,\rho}(\bm\alpha)$ and $\tilde{\mathfrak{L}}^i_{-1,\delta,\eps,\rho}(\bm\alpha)$ only differ by $(P)$-class terms in that:
	\begin{equation*}
		\begin{split}
			&\tilde{\mathfrak{L}}^i_{-1,\delta,\eps,\rho}(\bm\alpha)=\mathfrak{L}^i_{-1,\delta,\eps,\rho}(\bm\alpha)-\left(\mu_{R,i}\ps{V_{\gamma e_i}(t+r)\V}_{\delta,\eps}-\mu_{L,i}\ps{V_{\gamma e_i}(t-r)\V}_{\delta,\eps}\right)\\
			&-\mu_{B,i}\int_{ \Heps\cap \partial B(t,r)}\ps{V_{\gamma e_i}(\xi)\V}_{\delta,\eps}\frac{id\bar\xi-id\xi}{2}.
		\end{split}
	\end{equation*}
	
	\begin{defi}\label{def:desc1}
		Take $t\in\R$ and $(\beta,\bm\alpha)\in\mc A_{N,M+1}$.  We define the descendant field $\L_{-1}V_\beta$ within half-plane correlation functions by the limit
		\begin{equation*}
			\begin{split}
				&\ps{\L_{-1}V_\beta(t)\prod_{k=1}^NV_{\alpha_k}(z_k)\prod_{l=1}^MV_{\beta_l}(s_l)}\coloneqq\\
				&\lim\limits_{\delta,\eps,\rho\to0}\ps{\L_{-1}V_\beta(t)\prod_{k=1}^NV_{\alpha_k}(z_k)\prod_{l=1}^MV_{\beta_l}(s_l)}_{\delta,\eps,\rho}-\sum_{i=1}^2\tilde{\mathfrak{L}}^i_{-1,\delta,\eps,\rho}(\bm\alpha).
			\end{split}
		\end{equation*}
		Under the same assumptions, the $\Wb_{-1}V_\beta$ descendent field  is defined by setting
		\begin{equation*}
			\begin{split}
				&\ps{\Wb_{-1}V_\beta(t)\prod_{k=1}^NV_{\alpha_k}(z_k)\prod_{l=1}^MV_{\beta_l}(s_l)}\coloneqq\\
				&\lim\limits_{\delta,\eps,\rho\to0}\ps{\Wb_{-1}V_\beta(t)\prod_{k=1}^NV_{\alpha_k}(z_k)\prod_{l=1}^MV_{\beta_l}(s_l)}_{\delta,\eps,\rho}-\sum_{i=1}^2\tilde{\mathfrak{W}}^i_{-1,\delta,\eps,\rho}(\bm\alpha).
			\end{split}
		\end{equation*}
	\end{defi}
	Thanks to Lemma~\ref{lemma:desc1} we see that these limits are well-defined and analytic in a complex neighbourhood of $\mc A_{N,M+1}$.
    \begin{remark}
        Going along the proof of Lemma~\ref{lemma:desc1} we see that $\L_{-1}V_\beta$ is actually such that
        \begin{equation}\label{eq:expr_L1}
            \begin{split}
                &\ps{\L_{-1}V_\beta(t)\V}_{\delta,\eps}=\sum_{k=1}^N\frac{\ps{\beta,\alpha_k}}{2(z_k-t)}\ps{V_\beta(t)\V}_{\delta,\eps}-\Ir\left[\frac{\ps{\beta,\gamma e_i}}{2(x-t)}\ps{V_{\gamma e_i}(x)V_\beta(t)\V}_{\delta,\eps}\right]\\
                &+\Is\left[\left(\partial_x+\frac{\ps{\alpha_k,\gamma e_i}}{2(x-z_k)}\right)\ps{V_{\gamma e_i}(x)V_\beta(t)\V}_{\delta,\eps}\right]\\
                &-\Is\times\Ir\left[\frac{\ps{\gamma e_i,\gamma e_j}}{2(x-y)}\ps{V_{\gamma e_i}(x)V_{\gamma e_j}(y)V_\beta(t)\V}_{\delta,\eps}\right].
            \end{split}
        \end{equation}
    \end{remark}
	
	\subsubsection{Connection with derivatives of the correlation functions}
	The insertion of Virasoro descendants within correlation functions can be explicitly translated into derivatives of this correlation function. The following statement shows this connection for the $\L_{-1}$ descendant:
	\begin{proposition}\label{prop:L1_der}
		Take $t\in\R$ and $(\beta,\bm\alpha)\in\mc A_{N,M+1}$. Then in the sense of weak derivatives 
		\begin{equation*}
			\begin{split}
				&\ps{\L_{-1}V_\beta(t)\prod_{k=1}^NV_{\alpha_k}(z_k)\prod_{l=1}^MV_{\beta_l}(s_l)}=\partial_t \ps{V_\beta(t)\prod_{k=1}^NV_{\alpha_k}(z_k)\prod_{l=1}^MV_{\beta_l}(s_l)}.
			\end{split}
		\end{equation*}
	\end{proposition}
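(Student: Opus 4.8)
The plan is to prove the identity first at the regularized level, for fixed $\rho,\eps,\delta>0$, where $t\mapsto\ps{V_\beta(t)\V}_{\delta,\eps,\rho}$ is a genuinely smooth function of $t$ (on the complement of the other boundary insertions $s_1,\dots,s_M$), and only afterwards to pass to the limit in the sense of distributions. At the regularized level I would differentiate $\ps{V_\beta(t)\V}_{\delta,\eps,\rho}$ in $t$ and track the two sources of $t$-dependence separately: the vertex operator $V_{\beta,\rho}(t)$ itself, and the integration domain $\Reps$, whose excluded interval $(t-\eps,t+\eps)$ is centered at $t$ and hence moves with it. The bulk domain $\Heps$ contributes nothing, since it only removes the balls $B(z_k,\eps)$ around the bulk insertions and sits at height at least $\delta$ above the real axis, independently of $t\in\R$.

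For the first source, differentiating $V_{\beta,\rho}(t)=\rho^{|\beta|^2/4}e^{\ps{\frac\beta2,\Phi_\rho(t)}}$ produces $\ps{\tfrac\beta2,\partial\Phi_\rho(t)}V_{\beta,\rho}(t)$, which, after unwinding the boundary normalisation $V_\beta=e^{\ps{\frac\beta2,\Phi}}$, is exactly the integrand defining $\ps{\L_{-1}V_\beta(t)\V}_{\delta,\eps,\rho}$; this is the probabilistic incarnation of the CFT relation $\L_{-1}=\partial$. For the second source, differentiating the boundary Gaussian multiplicative chaos factor $\exp\big(-\sum_i\int_{\Reps}\rho^{\gamma^2/2}e^{\ps{\frac\gamma2 e_i,\Phi_\rho(x)}}\mu_i(dx)\big)$ in $t$ produces, by the fundamental theorem of calculus applied to the two moving endpoints of the hole, the boundary terms evaluated at $t\pm\eps$ weighted by the local values $\mu_{L,i}$ and $\mu_{R,i}$ of the cosmological measure. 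These are precisely the quantities $\tilde{\mathfrak L}^i_{-1,\delta,\eps,\rho}(\bm\alpha)$, so this source equals $-\tilde{\mathfrak L}_{-1,\delta,\eps,\rho}(\bm\alpha)$. Collecting the two pieces yields the clean regularized identity
\[
\partial_t\ps{V_\beta(t)\V}_{\delta,\eps,\rho}=\ps{\L_{-1}V_\beta(t)\V}_{\delta,\eps,\rho}-\tilde{\mathfrak L}_{-1,\delta,\eps,\rho}(\bm\alpha).
\]

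To conclude I would pass to the limit weakly. Fixing a test function $\phi\in C_c^\infty(\R\setminus\{s_1,\dots,s_M\})$ and integrating the regularized identity against $\phi$, a classical integration by parts in $t$ (legitimate since everything is smooth in $t$ at fixed $\rho,\eps,\delta$) moves the derivative onto $\phi$, giving $\int\partial_t\ps{V_\beta(t)\V}_{\delta,\eps,\rho}\phi(t)\,dt=-\int\ps{V_\beta(t)\V}_{\delta,\eps,\rho}\phi'(t)\,dt$. Letting $\rho,\eps,\delta\to0$, the right-hand side of the regularized identity tested against $\phi$ converges because, by Definition~\ref{def:desc1}, $\ps{\L_{-1}V_\beta(t)\V}_{\delta,\eps,\rho}-\tilde{\mathfrak L}_{-1,\delta,\eps,\rho}(\bm\alpha)\to\ps{\L_{-1}V_\beta(t)\V}$, while on the other side $\ps{V_\beta(t)\V}_{\delta,\eps,\rho}\to\ps{V_\beta(t)\V}$. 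Identifying limits gives $\int\ps{\L_{-1}V_\beta(t)\V}\phi(t)\,dt=-\int\ps{V_\beta(t)\V}\phi'(t)\,dt$ for all admissible $\phi$, which is exactly the assertion $\ps{\L_{-1}V_\beta(t)\V}=\partial_t\ps{V_\beta(t)\V}$ in the sense of weak derivatives.

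The main obstacle is the justification of this passage to the limit under the integral sign, i.e. the interchange of $\lim_{\delta,\eps,\rho\to0}$ with the $t$-integration. This requires $t$-locally uniform control of the regularized correlation functions, in particular as $t$ approaches one of the insertions $z_k$ or $s_l$ and as the auxiliary $V_{\gamma e_i}$ insertions hidden inside $\tilde{\mathfrak L}_{-1}$ collide with $V_\beta(t)$: the fusion estimates of Lemma~\ref{lemma:fusion} ensure that the resulting $t$-singularities are integrable on the support of $\phi$ (so dominated convergence applies), while Lemma~\ref{lemma:inf_integrability_toda} controls the tails. A secondary point is to confirm that the remainder $\tilde{\mathfrak L}_{-1,\delta,\eps,\rho}(\bm\alpha)$, carrying the $V_{\gamma e_i}(t\pm\eps)$ insertions, behaves as prescribed in the limit; this is precisely the content that makes the $(P)$-class subtraction in Definition~\ref{def:desc1} well-posed, and is therefore already available.
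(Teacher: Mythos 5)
Your proposal is correct, and it reaches the conclusion by a genuinely different route at the key step. The paper also works first at the regularized level and passes to a weak formulation against a test function, but it asserts the clean identity $\partial_t\ps{V_\beta(t)\V}_{\delta,\eps,\rho}=\ps{\L_{-1}V_\beta(t)\V}_{\delta,\eps,\rho}$ and then deals with the remainder only in the limit: it first restricts to $\ps{\beta,e_i}<0$ for $i=1,2$, where the fusion asymptotics of Lemma~\ref{lemma:fusion} give $\ps{V_{\gamma e_i}(t\pm\eps)V_\beta(t)\V}_{\delta,\eps}\to0$ and hence $\tilde{\mathfrak L}^i_{-1,\delta,\eps,\rho}(\bm\alpha)\to0$, so that the limit of the regularized $\L_{-1}$ correlation coincides with the descendant of Definition~\ref{def:desc1}; it then extends the identity to all of $\mc A_{N,M+1}$ by analyticity of both sides in $\beta$, i.e.\ by the $(P)$-class machinery. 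You instead keep the remainder explicit at the regularized level: applying the Leibniz rule to the moving hole $(t-\eps,t+\eps)$ of $\Reps$ (together with the jump of the boundary measure $\mu_i$ at $t$, which is what produces the weights $\mu_{L,i}$ and $\mu_{R,i}$) yields the exact identity $\partial_t\ps{V_\beta(t)\V}_{\delta,\eps,\rho}=\ps{\L_{-1}V_\beta(t)\V}_{\delta,\eps,\rho}-\tilde{\mathfrak L}_{-1,\delta,\eps,\rho}(\bm\alpha)$, up to the $o(1)$ in $\rho$ implicit in the paper's convention of writing $\tilde{\mathfrak L}$ after the $\rho\to0$ limit, and the combination on the right is precisely the one whose limit \emph{defines} $\ps{\L_{-1}V_\beta(t)\V}$, so you can pass to the limit directly for every $(\beta,\bm\alpha)\in\mc A_{N,M+1}$, with no restricted regime and no analytic continuation. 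What each approach buys: yours explains structurally why the subtraction in Definition~\ref{def:desc1} is exactly the right one (it is the boundary term generated by the moving hole), and it trades the two-step continuation argument for a domination argument — the interchange of the $\delta,\eps,\rho$ limits with the $t$-integration, uniformly on the support of the test function — which you correctly flag as the main burden and for which the uniform-in-regularization bounds of Lemmas~\ref{lemma:fusion} and~\ref{lemma:inf_integrability_toda} are indeed the right tools; note that the paper's analytic-continuation step quietly requires comparable locally uniform control to continue under the $t$-integral, so the analytic cost is similar, and your restriction to test functions supported away from $\{s_1,\dots,s_M\}$ is harmless since the fusion estimates make the singularities at the $s_l$ integrable.
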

	\begin{proof}
		Note that for positive $\eps,\delta,\rho$, $\ps{V_\beta(t)\prod_{k=1}^NV_{\alpha_k}(z_k)\prod_{l=1}^MV_{\beta_l}(s_l)}_{\delta,\eps,\rho}$ is differentiable with respect to $t$ with derivative given by $\ps{\L_{-1}V_\beta(t)\prod_{k=1}^NV_{\alpha_k}(z_k)\prod_{l=1}^MV_{\beta_l}(s_l)}_{\delta,\eps,\rho}$. In particular if we take $f$ to be any smooth, compactly supported function over $\R$ then
		\begin{align*}
			&\int_{\R}f(t)\ps{\L_{-1}V_\beta(t)\prod_{k=1}^NV_{\alpha_k}(z_k)\prod_{l=1}^MV_{\beta_l}(s_l)}_{\delta,\eps,\rho}dt\\
			&=\int_{\R}\left(-\partial_t f(t)\right)\ps{V_\beta(t)\prod_{k=1}^NV_{\alpha_k}(z_k)\prod_{l=1}^MV_{\beta_l}(s_l)}_{\delta,\eps,\rho}dt.
		\end{align*}
		
		Let us first assume that $\ps{\beta,e_i}<0$ for $i=1,2$: in that case $\lim\limits_{\delta,\eps,\rho\to0}\tilde{\mathfrak{L}}^i_{-1,\delta,\eps,\rho}(\bm\alpha)=0$. To see why we rely on the explicit expression provided by Lemma~\ref{lemma:desc1}:
		\begin{align*}
			&\tilde{\mathfrak{L}}^i_{-1,\delta,\eps}(\bm\alpha)= \mu_{L,i}\ps{V_{\gamma e_i}(t-\eps)V_\beta(t)\V}_{\delta,\eps}-\mu_{R,i}\ps{V_{\gamma e_i}(t+\eps)V_\beta(t)\V}_{\delta,\eps}.
		\end{align*}
		Under the assumption that $\ps{\beta,e_i}<0$ for all $i$, as $\delta,\eps\to0$ we have using the fusion asymptotics (Lemma~\ref{lemma:fusion}) that $\ps{V_{\gamma e_i}(t\pm\eps)V_\beta(t)\V}_{\delta,\eps}\to0$, so that the above expression vanishes in the limit. This allows to conclude that if $\ps{\beta,e_i}<0$ for $i=1,2$ then
		\begin{align*}
			&\int_{\R}f(t)\ps{\L_{-1}V_\beta(t)\prod_{k=1}^NV_{\alpha_k}(z_k)\prod_{l=1}^MV_{\beta_l}(s_l)}dt=\int_{\R}\left(-\partial_t f(t)\right)\ps{V_\beta(t)\prod_{k=1}^NV_{\alpha_k}(z_k)\prod_{l=1}^MV_{\beta_l}(s_l)}dt.
		\end{align*}
		
		To extend this equality to the whole range of values of $(\beta,\bm\alpha)\in\mc A_{N,M+1}$ we rely on the fact that both the left and right-hand sides in this equality depend analytically on $\beta$. 
	\end{proof}

    The insertion of a $\L_{-1}$ descendant thus acts as a differential operator on the correlation functions. A major difference between Virasoro and $W$ currents is that this is not the case, for generic values of $\beta$, for the $\Wb_{-1}$ descendant. As we will see in~\cite{CH_sym2} (see also~\cite[Proposition 5.1]{Toda_OPEWV}) in order to be able to express the insertion of a $\Wb_{-1}$ as a differential operator we need to specialize to special values of the weight $\beta$, in which case we end up with \textit{singular vectors}.

	\subsection{Virasoro descendants and conformal Ward identities}
	Having defined the descendants at the first level we now turn to the definition of the Virasoro descendants at level higher than one. As we will see the proof is slightly more involved compared to that previously disclosed. As a consequence we may start using the shortcut $\Psi_{i_1,\cdots,i_l}(x_1,\cdots,x_l)$ defined in Equation~\eqref{eq:def_Psi}.
	
	\subsubsection{Virasoro descendants}
	We first introduce the Virasoro descendants $\L_{-n}V_\beta$. For this purpose and like before we start by introducing remainder terms as follows.
	\begin{lemma}\label{lemma:descn}
		For any positive integer $n$ and $i$ in $1,2$ set 
		\begin{equation*}
			\begin{split}
				\mathfrak{L}^i_{-n,\delta,\eps,\rho}(\bm\alpha)\coloneqq \Is\left[\partial_x\left(\frac{1}{(x-t)^{n-1}} \ps{V_{\gamma e_i}(x)V_\beta(t)\V}_{\delta,\eps,\rho} \right)\right].
			\end{split}
		\end{equation*}
		Then the following quantity is $(P)$-class:
		\begin{equation*}
			\begin{split}
				&\ps{\L_{-n}V_\beta(t)\prod_{k=1}^NV_{\alpha_k}(z_k)\prod_{l=1}^MV_{\beta_l}(s_l)}_{\delta,\eps,\rho}-\sum_{i=1}^2\mathfrak{L}^i_{-n,\delta,\eps,\rho}(\bm\alpha).
			\end{split}
		\end{equation*}
	\end{lemma}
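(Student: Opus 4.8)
The plan is to follow the strategy of Lemma~\ref{lemma:desc1}, the only genuine novelty being the combinatorics produced by the quadratic part of $\L_{-n}^\beta$ in Equation~\eqref{eq:vir_desc}. First I would apply the Gaussian integration by parts of Lemma~\ref{lemma:GaussianIPP} to the linear piece $\ps{(n-1)Q+\beta,\partial^n\Phi(t)}/(n-1)!$, and its Wick-product extension~\eqref{eq:IPP_product} to each quadratic piece $-\ps{\partial^{i+1}\Phi(t)/i!,\partial^{n-i-1}\Phi(t)/(n-2-i)!}$ for $0\le i\le n-2$. Carrying out the two successive contractions on each quadratic term, this writes $\ps{\L_{-n}V_\beta(t)\V}_{\delta,\eps,\rho}$ as a finite sum of: (a) finite-insertion terms $\propto (z_k-t)^{-p}\ps{V_\beta(t)\V}_{\delta,\eps,\rho}$; (b) single integrals carrying one screening insertion $V_{\gamma e_i}(x)$ against kernels $(x-t)^{-p}$ together with their boundary images $(\bar x-t)^{-p}$; and (c) double integrals carrying two screening insertions $V_{\gamma e_i}(x)V_{\gamma e_j}(y)$ against products $(x-t)^{-a}(y-t)^{-b}$. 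The terms (a) are $(P)$-class by Lemma~\ref{lemma:ana_correl}, and for (b), (c) I would split every $\It$ into $\Is+\Ir$: by the fusion estimates of Lemma~\ref{lemma:fusion} the pieces in which all screening variables stay at fixed distance $r$ from $t$ are $(P)$-class, so only the genuinely singular pieces remain.

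The heart of the matter is then to recognise the singular remainder as $\sum_{i=1}^2\mathfrak L^i_{-n,\delta,\eps,\rho}(\bm\alpha)$, i.e. as the total derivative $\Is[\partial_x((x-t)^{-(n-1)}\Psi_i(x))]$, up to $(P)$-class corrections. Expanding the target using $\partial_x\Psi_i$ as in the $n=1$ computation~\eqref{eq:expr_L1}, its leading singularity is $-\bigl(n-1+\tfrac12\ps{\beta,\gamma e_i}\bigr)(x-t)^{-n}\Psi_i$. On the other hand the linear integration by parts produces the leading singularity $-\tfrac12\ps{(n-1)Q+\beta,\gamma e_i}(x-t)^{-n}\Psi_i$; since $\ps{Q,\gamma e_i}=\gamma^2+2$ by $Q=(\gamma+\tfrac2\gamma)\weyl$ and $\ps{\weyl,e_i}=1$, this differs from the target by $+\tfrac{(n-1)\gamma^2}{2}(x-t)^{-n}\Psi_i$. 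The key point is that this discrepancy is exactly supplied by the quadratic terms through the pairings in which both derivative factors contract with the \emph{same} screening insertion $V_{\gamma e_i}(x)$: summing $\tfrac14\ps{\gamma e_i,\gamma e_i}=\tfrac{\gamma^2}{2}$ over the $n-1$ quadratic terms reproduces precisely $\tfrac{(n-1)\gamma^2}{2}(x-t)^{-n}\Psi_i$. The cancellation of $\gamma^2$ is nothing but the marginality $\Delta_{\gamma e_i}=1$ of the screening operators, and the same diagonal pairings against the image kernels $(\bar x-t)^{-p}$ reconstruct the subleading structure $(x-t)^{-j}(\bar x-t)^{-(n-j)}$ built into the doubled bulk evaluation of $\Is$ and into the self-energy term $\tfrac{\ps{\gamma e_i,\gamma e_i}}{2(\bar x-x)}$ of $\partial_x\Psi_i$.

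It remains to dispose of the off-diagonal pairings, both the double integrals (c) and the single integrals in which the two factors hit $V_{\gamma e_i}(x)$ and a fixed insertion. Here I would use the symmetrization identity $\sum_{j=1}^{n-1}(x-t)^{-j}(y-t)^{-(n-j)}=\tfrac1{x-y}\bigl((y-t)^{-(n-1)}-(x-t)^{-(n-1)}\bigr)$: summing the product kernels over the order split collapses each into a $\tfrac1{x-y}$–kernel. The resulting piece $-\tfrac1{(x-y)(x-t)^{n-1}}\Psi_{i,j}(x,y)$ is exactly the double-integral term generated inside $\Is[\partial_x((x-t)^{-(n-1)}\Psi_i)]$ by the $\tfrac1{y-x}$ self-interaction of $\partial_x\Psi_i$, and is therefore already part of $\mathfrak L^i_{-n}$; the complementary piece attached to $(z_k-t)^{-(n-1)}$ or $(y-t)^{-(n-1)}$ is regular at $t$ and hence $(P)$-class, while the pieces with one variable in $\Ir$ are $(P)$-class by Lemma~\ref{lemma:fusion_integrability}. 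Collecting all identities shows that $\ps{\L_{-n}V_\beta(t)\V}_{\delta,\eps,\rho}-\sum_{i=1}^2\mathfrak L^i_{-n,\delta,\eps,\rho}(\bm\alpha)$ is a finite sum of $(P)$-class quantities, which is the claim.

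I expect the bookkeeping of the previous two paragraphs to be the main obstacle: one must track, across the $n-1$ quadratic terms, every distribution of the two derivatives among the screening insertions, the fixed insertions and the background charge, keep the boundary-image kernels $(\bar x-t)^{-p}$ correctly aligned with the definition of $\Is$, and check after symmetrization that no divergent contribution survives outside $\mathfrak L^i_{-n}$. The most delicate single step is the self-energy cancellation encoding $\Delta_{\gamma e_i}=1$, as it is precisely what turns the sum of linear and quadratic singularities into the total derivative $\partial_x((x-t)^{-(n-1)}\Psi_i)$ rather than a spurious non-integrable singular integral.
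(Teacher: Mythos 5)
Your proposal is correct and follows essentially the same route as the paper's proof: Gaussian integration by parts via Lemma~\ref{lemma:GaussianIPP} and Equation~\eqref{eq:IPP_product}, splitting $\It=\Is+\Ir$ with fusion estimates for the regular parts, the symmetrization identity~\eqref{eq:sym_id}, recognition of the total derivative $\partial_x\bigl((x-t)^{-(n-1)}\Psi_i(x)\bigr)$, the $x\leftrightarrow y$ symmetry of the two-fold integrals, and Lemma~\ref{lemma:fusion_integrability} for the leftover $\Ir\times\Is$ pieces. The only difference is presentational: the paper packages your explicit $\tfrac{(n-1)\gamma^2}{2}$ marginality cancellation between the linear term $\ps{(n-1)Q+\beta,\gamma e_i}/2$ and the diagonal quadratic contractions into the precomputed ``basic identities'' $\L_{-n}^\beta\bigl(\gamma e_i\ln\tfrac1{\norm{\cdot-t}}\bigr)$, which is the same computation unpacked.
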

	\begin{proof}
		The arguments are similar to the ones developed in the previous section. The starting point are the basic identities
		\begin{equation*}
			\L_{-n}^\beta\left(\alpha \ln\frac1{\norm{\cdot-t}}\right)=\frac{(n-1)\Delta_{\alpha}+\frac{\ps{\beta,\gamma e_i}}{2}}{(\cdot-t)^2}\qt{and} \L_{-n}^\beta\left(\gamma e_i\ln\frac1{\norm{\cdot-t}}\right)=\frac{n-1+\frac{\ps{\beta,\gamma e_i}}{2}}{(\cdot-t)^2}\cdot
		\end{equation*}
		Thanks to them we can write that
		\begin{align*}
			&\ps{\L_{-n}V_\beta(t)\prod_{k=1}^NV_{\alpha_k}(z_k)\prod_{l=1}^MV_{\beta_l}(s_l)}_{\delta,\eps}=\\
			&\left[\sum_{k=1}^{2N+M} \frac{(n-1)\Delta_{\alpha_k}}{2(z_k-t)^{n}}-\sum_{k=1}^{2N+M} \left(\frac{\ps{\beta,\alpha_k}}{2(z_k-t)^{n}}+ \sum_{p=1}^{n-1}\sum_{l\neq k}\frac{\ps{\alpha_k,\alpha_l}}{4(z_k-t)^p(z_l-t)^{n-p}}\right)\right]\ps{V_\beta(t)\V}_{\delta,\eps} \\
			&- \It \left[\left(\frac{n-1+\frac{\ps{\beta,\gamma e_i}}2}{(x-t)^{n}}-\sum_{p=1}^{n-1}\sum_{k=1}^{2N+M}\frac{\ps{\alpha_k,\gamma e_i}}{2(x-t)^p(z_k-t)^{n-p}}\right)\Psi_i(x)\right]\\
			&+ \sum_{i=1}^2\mu_{B,i}\int_{\Heps} \sum_{p=1}^{n-1}\frac{\ps{\gamma e_i,\gamma e_i}}{2(x-t)^p(\bar x-t)^{n-p}}\Psi_i(x)d^2x-\It^2\left[\sum_{p=1}^{n-1}\frac{\ps{\gamma e_i,\gamma e_j}}{4(x-t)^p(y-t)^{n-p}}\Psi_{i,j}(x,y)\right].
		\end{align*}
		
		Let us start with the one-fold integral. Since the integrals over $\Heps^c$ and $\Reps^c$ are $(P)$-class  we can focus on the singular part of these integrals.
		For this we rely on the fact that 
		\begin{equation}\label{eq:sym_id}
			\sum_{p=1}^{n-1}\frac{1}{(x-t)^p(y-t)^{n-p}}=\frac{1}{x-y}\left(\frac{1}{(y-t)^{n-1}}-\frac{1}{(x-t)^{n-1}}\right),
		\end{equation}
		so that the singular one-fold integrals can be rewritten as
		\begin{align*}
			&- \sum_{k=1}^{2N+M}\frac1{(z_k-t)^{n-1}}\Is \left[\frac{\ps{\alpha_k,\gamma e_i}}{2(z_k-x)} \Psi_i(x)\right]\\
			&- \Is \left[\left(\frac{n-1+\frac{\ps{\beta,\gamma e_i}}2}{(x-t)^n}+\sum_{k=1}^{2N+M}\frac{\ps{\alpha_k,\gamma e_i}}{2(x-t)^{n-1}(x-z_k)}\right) \Psi_i(x)\right]\\
			&- \sum_{i=1}^2\mu_{B,i}\int_{\Heps} \left(\frac{\ps{\gamma e_i,\gamma e_i}}{2(x-t)^{n-1)}(x-\bar x)}+\frac{\ps{\gamma e_i,\gamma e_i}}{2(\bar x-t)^{n-1}(\bar x-x)}\right)\Psi_i(x)d^2x.
		\end{align*}
		The first term is $(P)$-class; the second one can be dealt with like before by writing it as a total derivative in that
		\begin{align} \label{eq:tot deriv 1}
			&\partial_x\left(\frac{1}{(t-x)^{n-1}}\Psi_i(x)\right)\\
			&=\left(\frac{n-1+\frac{\ps{\beta,\gamma e_i}}2}{(t-x)^n}+\sum_{k=1}^{2N+M}\frac{\ps{\alpha_k,\gamma e_i}}{2(t-x)^{n-1}(z_k-x)}+\frac{\ps{\gamma e_i,\gamma e_i}}{2(t-x)^{n-1)}(\bar x-x)}\right) \Psi_i(x)\nonumber\\
			&-\It\left(\frac{\ps{\gamma e_i,\gamma e_j}}{2(t-x)^{n-1}(y-x)}\Psi_i(x)\right)\nonumber.
		\end{align}
		
		Therefore the only remaining terms to be treated are given by
		\begin{align*}
			&-\Is\times\It \left(\frac{\ps{\gamma e_i,\gamma e_j}}{2(x-t)^{n-1}(y-x)} \Psi_i(x)\right)\\
			&+\It^2\left(\sum_{i=1}^{n-1}\frac{\ps{\gamma e_i,\gamma e_j}}{4(x-t)^i(y-t)^{n-i}}\Psi_{i,j}(x,y)\right).
		\end{align*}
		We can rewrite the second integral using the identity~\eqref{eq:sym_id} together with symmetry in the $x,y$ variable. By doing so we see that the integral over $\Is\times\It$ vanishes, so that only the following integral remains: 
		\begin{align*}
			&\Ir\times\It \left(\frac{\ps{\gamma e_i,\gamma e_j}}{2(x-t)^{n-1}(y-x)} \Psi_i(x)\right).
		\end{align*}
		Over $\Ir\times\Ir$ we use again the identity~\eqref{eq:sym_id} to write the integrand as 
        $$\sum_{p=1}^{n-1}\frac{\ps{\gamma e_i,\gamma e_j}}{2(x-t)^{n-p}(y-t)^p} \Psi_{i,j}(x,y)$$
        so that this term is $(P)$-class. As for the integral over $\Ir\times\Is$ it is seen to be $(P)$-class thanks to the fusion asymptotics of Lemma~\ref{lemma:fusion_integrability}.
	\end{proof}
	
	\begin{defi}
		We then define the $L_{-n}V_\beta$ descendant within correlation functions via the limits
		\begin{equation*}
			\begin{split}
				&\ps{\L_{-n}V_\beta(t)\prod_{k=1}^NV_{\alpha_k}(z_k)\prod_{l=1}^MV_{\beta_l}(s_l)}\coloneqq\\
				&\lim\limits_{\delta,\eps,\rho\to0}\ps{\L_{-n}V_\beta(t)\prod_{k=1}^NV_{\alpha_k}(z_k)\prod_{l=1}^MV_{\beta_l}(s_l)}_{\delta,\eps,\rho}-\sum_{i=1}^2\tilde{\mathfrak{L}}^i_{-n,\delta,\eps,\rho}(\bm\alpha).
			\end{split}
		\end{equation*}
	\end{defi}
	Like before we can provide an explicit expression for the remainder terms defined above:
	\begin{equation*}
		\begin{split}
			&\tilde{\mathfrak{L}}^i_{-n,\delta,\eps,\rho}(\bm\alpha)=\left(\mu_{L,i}\frac{\ps{V_{\gamma e_i}(t-\eps)V_\beta(t)\V}_{\delta,\eps}}{(-\eps)^{n-1}}-\mu_{R,i}\frac{\ps{V_{\gamma e_i}(t+\eps)V_\beta(t)\V}_{\delta,\eps}}{(\eps)^{n-1}}\right)\\
			&-\mu_{B,i}\int_{(t-r,t+r)}\ps{V_{\gamma e_i}(x+i\delta)V_\beta(t)\V}_{\delta,\eps}\Im\left(\frac{1}{(x+i\delta-t)^{n-1}}\right)dx.
		\end{split}
	\end{equation*}
	And like before these remainder terms and the one provided in Lemma~\ref{lemma:descn} differ by quantities that are $(P)$-class.	As such the limit defining $\ps{\L_{-n}V_\beta(t)\prod_{k=1}^NV_{\alpha_k}(z_k)\prod_{l=1}^MV_{\beta_l}(s_l)}$ is well-defined and analytic in a complex neighbourhood of $\mc A_{N,M+1}$.
    Likewise the $\L_{-n}V_\beta$ descendant when inserted within correlation functions is seen to be given at the regularized level by
    \begin{equation}\label{eq:expr_Ln}
        \begin{split}
			&\ps{\L_{-n}V_\beta(t)\V}_{\delta,\eps}=A_{\delta,\eps}+B_{\delta,\eps},\qt{where}\\
            &A_{\delta,\eps}\coloneqq\left(\sum_{k=1}^{2N+M}\frac{\ps{(n-1)Q+\beta,\alpha_k}}{2(z_k-t)^n}-\sum_{k,l=1}^{2N+M}\sum_{p=1}^{n-1}\frac{\ps{\alpha_k,\alpha_l}}{4(z_k-t)^p(z_l-t)^{n-p}}\right)\ps{V_\beta(t)\V}_{\delta,\eps}\\
            &-\Ir\left[\left(\frac{n-1+\frac{\ps{\beta,\gamma e_i}}2}{(x-t)^n}-\sum_{k=1}^{2N+M}\sum_{p=1}^{n-1}\frac{\ps{\gamma e_i,\alpha_k}}{2(x-t)^p(z_k-t)^{n-p}}\right)\Psi_i(x)\right]\\
            &-\Is\left[\sum_{k=1}^{2N+M}\frac{\ps{\gamma e_i,\alpha_k}}{2(z_k-t)^{n-1}(z_k-x)}\Psi_i(x)\right]\\
            &+\Ir\times\Ir\left[-\sum_{p=1}^{n-1}\frac{\ps{\gamma e_i,\gamma e_j}}{4(x-t)^{p}(y-t)^{n-p}}\Psi_{i,j}(x,y)\right]\\
            &+\Is\times\Ir\left[\frac{\ps{\gamma e_i,\gamma e_j}}{2(y-t)^{n-1}(y-x)}\Psi_{i,j}(x,y)\right]\\
            &\text{while}\quad B_{\delta,\eps}\coloneqq\Is\left[\partial_x\left(\frac1{(x-t)^{n-1}}\Psi_i(x)\right)\right].
        \end{split}
     \end{equation}

	\subsubsection{Conformal Ward identities}
	We have seen previously that the $\L_{-1}$ descendant allowed to define the (weak) derivatives of the correlation functions. As we now show the other descendants $\L_{-n}$, $n\geq 2$ play an analogous role based on the conformal Ward identities.
	\begin{theorem}\label{thm:ward_vir}
		Take $t\in\R$ and $(\beta,\bm\alpha)\in\mc A_{N,M+1}$. Then for any $n\geq2$, in the sense of weak derivatives 
		\begin{equation*}
			\begin{split}
				&\ps{\L_{-n}V_\beta(t)\prod_{k=1}^NV_{\alpha_k}(z_k)\prod_{l=1}^MV_{\beta_l}(s_l)}\\
                &=\left(\sum_{k=1}^{2N+M}\frac{-\partial_{z_k}}{(z_k-t)^{n-1}}+\frac{(n-1)\Delta_{\alpha_k}}{(z_k-t)^n}\right) \ps{V_\beta(t)\prod_{k=1}^NV_{\alpha_k}(z_k)\prod_{l=1}^MV_{\beta_l}(s_l)}.
			\end{split}
		\end{equation*}
	\end{theorem}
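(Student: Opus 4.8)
The plan is to mirror the strategy of Proposition~\ref{prop:L1_der}, replacing the single derivative $\partial_t$ by the family of derivatives $\partial_{z_k}$ appearing on the right-hand side, and to carry out all the identifications at the regularized level before passing to the limit and extending analytically. The starting point is that for fixed $\delta,\eps,\rho>0$ the correlation function $\ps{V_\beta(t)\V}_{\delta,\eps,\rho}$ is a smooth function of every insertion point, and since $\partial_{z_k}V_{\alpha_k,\rho}(z_k)=\ps{\alpha_k,\partial\Phi_\rho(z_k)}V_{\alpha_k,\rho}(z_k)$, the derivative $\partial_{z_k}\ps{V_\beta(t)\V}_{\delta,\eps,\rho}$ is exactly the insertion of the level-one descendant $\L_{-1}V_{\alpha_k}$ at $z_k$ (up to a boundary contribution coming from the $z_k$-dependence of the excised domain $\Heps$, which vanishes as $\eps\to0$). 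By Gaussian integration by parts (Lemma~\ref{lemma:GaussianIPP}) this equals a diagonal term proportional to $\ps{V_\beta(t)\V}_{\delta,\eps,\rho}$ plus one-fold integrals of the $\Psi_i$. The first step is therefore to compute $\big(\sum_k \frac{-\partial_{z_k}}{(z_k-t)^{n-1}}+\frac{(n-1)\Delta_{\alpha_k}}{(z_k-t)^n}\big)\ps{V_\beta(t)\V}_{\delta,\eps,\rho}$ explicitly.

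Next I would match this with the explicit regularized form of $\ps{\L_{-n}V_\beta(t)\V}_{\delta,\eps}=A_{\delta,\eps}+B_{\delta,\eps}$ recorded in Equation~\eqref{eq:expr_Ln}. Concretely, the diagonal contribution $\frac{\ps{\alpha_k,\beta}}{2(t-z_k)}$ produced by $\partial_{z_k}$, weighted by $(z_k-t)^{-(n-1)}$, reproduces the $\ps{\beta,\alpha_k}$ part of the coefficient in $A_{\delta,\eps}$; the curvature weight $(n-1)\Delta_{\alpha_k}(z_k-t)^{-n}$ supplies the remaining $\ps{(n-1)Q,\alpha_k}$ part together with the $k=l$ diagonal of the double sum (using $\Delta_{\alpha_k}=\frac12\ps{\alpha_k,Q}-\frac14\norm{\alpha_k}^2$); and the cross terms $\frac{\ps{\alpha_k,\alpha_l}}{2(z_l-z_k)}$ reorganize, through the symmetrization identity~\eqref{eq:sym_id} and the antisymmetry of $\frac{1}{z_l-z_k}$ against the symmetric $\ps{\alpha_k,\alpha_l}$, into the off-diagonal double sum of $A_{\delta,\eps}$. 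Likewise the one-fold integrals generated by the $\partial_{z_k}$'s recombine, again by~\eqref{eq:sym_id}, with the $\Is$- and $\Ir$-integrals of $A_{\delta,\eps}$, the leftover being precisely the total-derivative term $B_{\delta,\eps}=\Is\big[\partial_x\big(\frac{1}{(x-t)^{n-1}}\Psi_i(x)\big)\big]$. Applying Stokes' formula to $B_{\delta,\eps}$ turns it into boundary contributions at $x=t\pm\eps$, along $\partial B(t,r)$, and along $\R+i\delta$; the singular ones are by construction the remainder $\tilde{\mathfrak L}^i_{-n,\delta,\eps,\rho}(\bm\alpha)$ subtracted in the definition, while the rest is $(P)$-class by Lemma~\ref{lemma:fusion_integrability}.

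I would then take the limit, first restricting to weights with $\ps{\beta,e_i}<0$ for $i=1,2$: there the fusion estimates of Lemma~\ref{lemma:fusion} force $\tilde{\mathfrak L}^i_{-n,\delta,\eps,\rho}(\bm\alpha)\to0$, so that $\ps{\L_{-n}V_\beta(t)\V}$ is simply the limit of the regularized expression, and Lemma~\ref{lemma:inf_integrability_toda} guarantees that the integrals converge with no boundary-at-infinity term surviving. To accommodate the possible non-differentiability of the limiting correlation function in the $z_k$ (the fusion singularities as insertions collide), I would phrase the whole identity weakly: test the regularized identity against a smooth compactly supported function of the insertion points, integrate the $\partial_{z_k}$ by parts onto the test function and the rational weights $(z_k-t)^{-(n-1)}$, and let $\delta,\eps,\rho\to0$ using that the regularized correlation functions are $(P)$-class (Lemma~\ref{lemma:ana_correl}). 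This proves the identity for $\ps{\beta,e_i}<0$, and since both sides are analytic in $\beta$ on a complex neighbourhood of $\mc A_{N,M+1}$ — the left-hand side by Lemma~\ref{lemma:descn} and the $(P)$-class property, the right-hand side because the correlation function and its weak $z_k$-derivatives depend analytically on $\beta$ — the identity extends to all of $\mc A_{N,M+1}$.

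The main obstacle is the exact algebraic reconciliation of the second step: one must verify that the full collection of one- and two-fold integral terms generated by differentiating the correlation function reassembles, after repeated use of~\eqref{eq:sym_id} and of the symmetry of the integrands in the integration variables, into precisely $A_{\delta,\eps}+B_{\delta,\eps}$, with every \emph{fake} singularity cancelling and the only genuinely divergent boundary contribution being the subtracted $\tilde{\mathfrak L}^i_{-n}$. This bookkeeping is what makes the computation substantially heavier than the $n=1$ case, because of the sums over $p$ from $1$ to $n-1$ and the double integrals $\Psi_{i,j}$; particular care is needed to ensure that the terms discarded as $(P)$-class are genuinely integrable near $x=t$, which is exactly what the continuity statement at the end of Lemma~\ref{lemma:fusion} provides.
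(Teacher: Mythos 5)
Your overall strategy --- regularized identification via Lemma~\ref{lemma:GaussianIPP} and Equation~\eqref{eq:expr_Ln}, symmetrization through~\eqref{eq:sym_id}, Stokes' formula, fusion estimates, and analytic continuation --- is indeed the paper's, and your treatment of the diagonal terms and of the remainder at $t$ is correct. But there is a genuine gap in your bookkeeping at the \emph{other} boundary insertions $s_l$. When you match the derivative expression against $A_{\delta,\eps}$, the leftover is not only $B_{\delta,\eps}=\Is\bigl[\partial_x\bigl(\tfrac{1}{(x-t)^{n-1}}\Psi_i(x)\bigr)\bigr]$: the paper's computation shows the difference between the two sides at the regularized level is the same total derivative integrated over the \emph{full} domain $\It$, and the portion over $\Ir$ is not $(P)$-class. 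Applying Stokes to it produces, besides the circles $\oint_{\partial B(z_k,\eps)}\Psi_i(\xi)d\xi$ (which do vanish by fusion), endpoint terms of the form $\mu_i(s_l^\pm)\frac{\Psi_i(s_l\pm\eps)}{(s_l\pm\eps-t)^{n-1}}$ at each $s_l$. By the fusion estimate~\eqref{eq:fusion_rr} one has $\Psi_i(s_l\pm\eps)\asymp\eps^{-\ps{\gamma e_i,\beta_l}/2+\cdots}$, so these terms do \emph{not} tend to zero whenever $\ps{e_i,\beta_l}\geq 0$, a regime allowed throughout $\mc A_{N,M+1}$; this contradicts both your parenthetical claim that the domain-variation contributions vanish as $\eps\to0$ and your assertion that ``the only genuinely divergent boundary contribution'' is $\tilde{\mathfrak{L}}^i_{-n}$. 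Your escape hatch --- restricting to $\ps{\beta,e_i}<0$ and continuing analytically in $\beta$ --- is powerless here, since these terms are governed by the fixed weights $\beta_l$, not by $\beta$.

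These $s_l$ contributions are not noise to be discarded: they are exactly what makes the statement one about \emph{weak} derivatives. The paper's proof subtracts the combined remainder $\mathfrak R^i(\delta,\eps,\rho;\bm\alpha)=\tilde{\mathfrak{L}}^i_{-n,\delta,\eps,\rho}(\bm\alpha)+\sum_{l=1}^M\frac{\tilde{\mathfrak{L}}^i_{-1,\delta,\eps,\rho}(\bm\alpha;\beta_l)}{(s_l-t)^{n-1}}$, where $\tilde{\mathfrak{L}}^i_{-1}(\bm\alpha;\beta_l)$ is the Lemma~\ref{lemma:desc1} remainder attached to $\L_{-1}V_{\beta_l}(s_l)$ --- precisely the quantity whose subtraction defines the weak derivative $\partial_{s_l}$ through Proposition~\ref{prop:L1_der} --- and then verifies that the Stokes endpoint terms at $s_l\pm\eps$ match these up to the error $\bigl(\frac{1}{(s_l\pm\eps-t)^{n-1}}-\frac{1}{(s_l-t)^{n-1}}\bigr)\Psi_i(s_l\pm\eps)\to0$, which fusion does control since the prefactor gains a factor of $\eps$. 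To repair your argument you must either carry these $\L_{-1}$-remainders along as the paper does, so that they are absorbed into the very definition of the weak derivatives appearing on the right-hand side, or restrict additionally to $\ps{\beta_l,e_i}<0$ for all $l$ and continue analytically in \emph{all} the weights (legitimate, since the $(P)$-class property gives analyticity in a complex neighbourhood of $\mc A_{N,M+1}$ in every weight, but you would need to say so). As written, your proof silently asserts these contributions vanish, which is false on a large part of the admissible range.
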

	\begin{proof}
		As explained in Proposition~\ref{prop:L1_der} the weak derivatives are defined using the $\L_{-1}$ descendant fields, themselves defined using a regularization procedure based on Lemma~\ref{lemma:desc1}. As such our goal is to show that
		\begin{equation}\label{eq:to_prove_Ln}
			\begin{split}
				\lim\limits_{\delta,\eps,\rho\to 0}&\left(\Lc_{-n}-\sum_{k=1}^{2N+M}\frac{-\partial_{z_k}}{(z_k-t)^{n-1}}+\frac{(n-1)\Delta_{\alpha_k}}{(z_k-t)^n}\right) \ps{V_\beta(t)\prod_{k=1}^NV_{\alpha_k}(z_k)\prod_{l=1}^MV_{\beta_l}(s_l)}_{\delta,\eps,\rho}\\
				&\quad-\sum_{i=1}^2\mathfrak R^i(\delta,\eps,\rho;\bm\alpha)=0,\qt{with}
			\end{split}
		\end{equation}
		\begin{align*}
			&\Lc_{-n}\ps{V_\beta(t)\prod_{k=1}^NV_{\alpha_k}(z_k)\prod_{l=1}^MV_{\beta_l}(s_l)}_{\delta,\eps,\rho}\coloneqq\ps{\L_{-n}V_\beta(t)\prod_{k=1}^NV_{\alpha_k}(z_k)\prod_{l=1}^MV_{\beta_l}(s_l)}_{\delta,\eps,\rho}\qt{and}\\
			&\mathfrak R^i(\delta,\eps,\rho;\bm\alpha)=\tilde{\mathfrak{L}}^i_{-n,\delta,\eps,\rho}(\bm\alpha)+\sum_{l=1}^M\frac{\tilde{\mathfrak{L}}^i_{-1,\delta,\eps,\rho}(\bm\alpha;\beta_l)}{(s_l-t)^{n-1}}.
		\end{align*}
		In the above $\tilde{\mathfrak{L}}^i_{-1,\delta,\eps,\rho}(\bm\alpha;\beta_l)$ is the remainder term from Lemma~\ref{lemma:desc1} but associated to the boundary descendant field $\L_{-1}V_{\beta_l}(s_l)$. 
		
		To see why this is true we combine the exact expressions provided in the proofs of Lemmas~\ref{lemma:desc1} and~\ref{lemma:descn} to get that the first line in Equation~\eqref{eq:to_prove_Ln} is given by 
		\begin{align*}
			&- \It \left[\left(\frac{n-1+\frac{\ps{\beta,\gamma e_i}}2}{(x-t)^{n}}+\sum_{k=1}^{2N+M}\frac{\ps{\alpha_k,\gamma e_i}}{2(x-t)^{n-1}(z_k-x)}\right) \Psi_i(x)\right]\\
			&- \sum_{i=1}^2\mu_{B,i}\int_{\Heps} \left(\frac{\ps{\gamma e_i,\gamma e_i}}{2(x-t)^{n-1)}(x-\bar x)}+\frac{\ps{\gamma e_i,\gamma e_i}}{2(\bar x-t)^{n-1}(\bar x-x)}\right)\Psi_i(x)d^2x\\
			&+\It^2\left(\sum_{i=1}^{n-1}\frac{\ps{\gamma e_i,\gamma e_j}}{2(x-t)^{n-1}(x-y)}\Psi_{i,j}(x,y)\right).
		\end{align*}
		Along the same lines as before the latter is a total derivative. We get that the first line in Equation~\eqref{eq:to_prove_Ln} is equal to
		\begin{align*}
			& \It \left[\partial_x\left(\frac{1}{(x-t)^{n-1}}\Psi_i(x)\right) \right].
		\end{align*}
		After integration by parts (that is application of Stoke's formula over the domains $\Heps$ and $\Reps$) we recover the desired remainder terms provided that for any $1\leq l\leq M$,
		\begin{align*}
			\lim\limits_{\delta,\eps,\rho\to0} \left(\frac{1}{(s_l+\eps-t)^{n-1}}-\frac{1}{(s_l-t)^{n-1}}\right)\Psi_i(s_l+\eps)=0,
		\end{align*}
		and likewise for any $1\leq k\leq N$:
		\begin{align*}
			\lim\limits_{\delta,\eps,\rho\to0} \oint_{\partial B(z_k,\eps)}\Psi_i(\xi)d\xi=0,
		\end{align*}
		This follows from the fusion asymptotics of Lemma~\ref{lemma:fusion}.
	\end{proof}

    \subsection{$W$-descendants and higher-spin Ward identities}
    Having shown the validity of conformal Ward identities, we now turn to the higher-spin symmetry enjoyed by Toda CFTs and which are encoded using the current $\Wb$. To this end we first define the $W$-descendants at the order two and then proceed to the general case of $\Wb_{-n}$ with $n\geq 3$. We will show that the insertion of a $\Wb_{-n}$ descendant, $n\geq 3$, within correlation functions has an effect similar to that of the Virasoro descendant $\L_{-n}$ but that involves this time the $\Wb$ descendants corresponding to the other insertions.
    
	\subsubsection{Definition of the $W$-descendants}
	In order to define the $W$-descendants we proceed in the same fashion that we did for the Virasoro descendants. And to start with we define the descendant at the level two.
	\begin{lemma}\label{lemma:desc_w2}
		Let us set for $i=1,2$: 
		\begin{equation} \label{eq:rem_W2}
            \begin{split}
			\mathfrak{W}^i_{-2,\delta,\eps,\rho}(\bm\alpha)&\coloneqq\Is\left[\partial_x\left(\left(\frac{2h_2(e_i)\omega_{\hat i}(\beta)}{x-t}+\sum_{k=1}^{2N+M}\frac{2h_2(e_i)\omega_{\hat i}(\alpha_k)}{x-z_k}\right)\Psi_i(x)\right)\right]\nonumber\\
			&-\Is\times \It\left[\delta_{i\neq j}\partial_x\left(\frac{2\gamma h_2(e_i)}{x-y}\Psi_{i,j}(x,y)\right)\right]
        \end{split}
		\end{equation}
        where recall the notation $\hat i=3-i$.	Then the following quantity is $(P)$-class:
		\begin{equation*}
			\begin{split}
				&\ps{\Wb_{-2}V_\beta(t)\prod_{k=1}^NV_{\alpha_k}(z_k)\prod_{l=1}^MV_{\beta_l}(s_l)}_{\delta,\eps,\rho}-\sum_{i=1}^2\mathfrak{W}^i_{-2,\delta,\eps,\rho}(\bm\alpha).
			\end{split}
		\end{equation*}
		We define the $\Wb_{-2}$ descendant by setting
		\begin{equation*}
			\begin{split}
				&\ps{\Wb_{-2}V_\beta(t)\prod_{k=1}^NV_{\alpha_k}(z_k)\prod_{l=1}^MV_{\beta_l}(s_l)}\coloneqq\\
				&\lim\limits_{\delta,\eps,\rho\to0}\ps{\Wb_{-2}V_\beta(t)\prod_{k=1}^NV_{\alpha_k}(z_k)\prod_{l=1}^MV_{\beta_l}(s_l)}_{\delta,\eps,\rho}-\sum_{i=1}^2\tilde{\mathfrak{W}}^i_{-2,\delta,\eps,\rho}(\bm\alpha).
			\end{split}
		\end{equation*}
	\end{lemma}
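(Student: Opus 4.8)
The plan is to follow the template already established in Lemmas~\ref{lemma:desc1} and~\ref{lemma:descn}. Starting from the regularized correlation function $\ps{\Wb_{-2}V_\beta(t)\V}_{\delta,\eps,\rho}$, which by definition equals the correlation function with the Wick-ordered insertion $:\Wb_{-2}^\beta[\Phi]:$ given by Equation~\eqref{eq:W_desc}, I would convert the derivatives of the field at $t$ into correlation functions carrying extra $V_{\gamma e_i}$ insertions by repeated use of the Gaussian integration by parts formula~\eqref{eq:IPP_product}. The terms $q\big(B(\partial^2\Phi,\beta)-B(\beta,\partial^2\Phi)\big)-2C(\beta,\beta,\partial^2\Phi)$ are linear in $\partial^2\Phi$, so a single application of~\eqref{eq:IPP_product} produces $(P)$-class boundary contributions proportional to $1/(z_k-t)^2$ together with one-fold integrals $\It[\cdots\Psi_i(x)]$. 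The term $2C^\sigma(\beta,\partial\Phi,\partial\Phi)$ is quadratic in $\partial\Phi$ and requires two successive applications, generating both one-fold integrals $\It[\cdots\Psi_i(x)]$ — arising when one contraction lands on an insertion point or on the vertex operator $V_{\gamma e_i}$ already produced — and two-fold integrals $\It^2[\cdots\Psi_{i,j}(x,y)]$ — arising when both contractions create new insertions.

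Next I would compute explicitly all the coefficients produced when one or two arguments of the forms $B$, $C$ and $C^\sigma$ are specialized to $\gamma e_i$. The crucial algebraic fact, in the same spirit as the identity $\Wb_{-1}^\beta(\gamma e_i)=-h_2(e_i)\big(q-2\omega_{\hat i}(\beta)\big)\ps{\beta,\gamma e_i}$ used in Lemma~\ref{lemma:desc1}, is that these specializations collapse onto the single scalar $h_2(e_i)$ multiplied by $\omega_{\hat i}$ evaluated on the relevant weight (or by $\gamma$ for the two-insertion contractions). This is exactly what produces the coefficients $2h_2(e_i)\omega_{\hat i}(\beta)$, $2h_2(e_i)\omega_{\hat i}(\alpha_k)$ and $2\gamma h_2(e_i)$ appearing in $\mathfrak{W}^i_{-2}$ in Equation~\eqref{eq:rem_W2}.

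I would then split each integral over $\It$ into its singular part $\Is$ over $B(t,r)$ and its regular part $\Ir$. By Lemma~\ref{lemma:fusion} the $\Ir$ and $\Ir\times\Ir$ pieces, and by Lemma~\ref{lemma:fusion_integrability} the mixed $\Is\times\Ir$ pieces, are $(P)$-class, so only the genuinely singular behaviour near $t$ must be tracked. For the two-fold integrals, symmetry under $x\leftrightarrow y$ annihilates the diagonal $\Is\times\Is$ contribution exactly as in Lemma~\ref{lemma:descn}; the feature special to the higher-spin case is that the $i\neq j$ part does not simply cancel but reorganizes, through the same total-derivative identity employed for $\L_{-n}$, into $\partial_x$ of $\frac{2\gamma h_2(e_i)}{x-y}\Psi_{i,j}(x,y)$ over $\Is\times\It$, which is the second line of~\eqref{eq:rem_W2}. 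Meanwhile the singular one-fold terms, after absorbing the $1/(x-t)^2$ and $1/(x-z_k)$ singularities into $\partial_x\big(\big(\tfrac{2h_2(e_i)\omega_{\hat i}(\beta)}{x-t}+\sum_k\tfrac{2h_2(e_i)\omega_{\hat i}(\alpha_k)}{x-z_k}\big)\Psi_i(x)\big)$ exactly as in the proof of Lemma~\ref{lemma:descn}, assemble into the first line of~\eqref{eq:rem_W2}. Collecting everything shows $\ps{\Wb_{-2}V_\beta(t)\V}_{\delta,\eps,\rho}-\sum_i\mathfrak{W}^i_{-2,\delta,\eps,\rho}$ is $(P)$-class, and since $\mathfrak{W}^i_{-2}$ differs from its $r$-independent counterpart $\tilde{\mathfrak{W}}^i_{-2}$ by $(P)$-class boundary terms, the limit defining the descendant exists and is analytic on a complex neighbourhood of $\mc A_{N,M+1}$.

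The main obstacle will be the bookkeeping of the higher-spin coefficients. Unlike the Virasoro case, where integration by parts only generates the scalar products $\ps{\cdot,\cdot}$, here every contraction must be routed through the non-symmetric bilinear form $B$ and the trilinear form $C$, and it is a nontrivial algebraic cancellation that all resulting terms collapse onto $h_2(e_i)$ times $\omega_{\hat i}$ or $\gamma$. Verifying that the precise $\delta_{i\neq j}$ structure and the exact numerical coefficients of~\eqref{eq:rem_W2} emerge — rather than a residual singular contribution that fails to be a total derivative — is the delicate point where the computation must be carried out in full.
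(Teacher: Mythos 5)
Your proposal is correct and follows essentially the same route as the paper's proof, which likewise reduces the argument to the template of Lemma~\ref{lemma:descn} (Gaussian integration by parts via~\eqref{eq:IPP_product}, splitting into $\Is$/$\Ir$ with Lemmas~\ref{lemma:fusion} and~\ref{lemma:fusion_integrability}, symmetrization, and recognition of total derivatives) combined with explicit specialization identities such as $C^\sigma(\beta,\gamma e_i,\alpha_k)=-h_2(e_i)\left(\omega_{\hat i}(\alpha_k)\ps{\beta,\gamma e_i}+\omega_{\hat i}(\beta)\ps{\alpha_k,\gamma e_i}\right)$ and $C^\sigma(\beta,\gamma e_i,\gamma e_j)=-\gamma h_2(e_i)\ps{\beta,\gamma e_i}\delta_{i\neq j}-\ps{\gamma e_i,\gamma e_j}h_2(e_i)\omega_{\hat i}(\beta)$. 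The delicate point you flag at the end is exactly where the paper's computation lands: the residual two-fold integrals collapse onto the $\delta_{i\neq j}$ total derivative via the identity $h_2(e_i)\omega_j(\beta)+2h_2(\beta)=h_2(e_i)\ps{e_i,\beta}$ for $i\neq j$.
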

	\begin{proof}
		The proof relies on the very same arguments as the one of Lemma~\ref{lemma:descn}, the main difference lying in the combinatorial identities underlying the definition of the $W$-descendants. To be more specific we rely on the fact that (shown by explicit computations) for $i=1,2$:
		\begin{align*}
			&\Wb_{-2}^\beta\left(\gamma e_i \ln\frac{1}{\norm{\cdot-t}}\right)=2h_2(e_i)w_{\hat i}(\beta)\frac{1+\frac{\ps{\beta,\gamma e_i}}{2}}{(\cdot-t)^2};\\
            &C^\sigma(\beta,\gamma e_i,\alpha_k)=-h_2(e_i)\left(\omega_{\hat i}(\alpha_k)\ps{\beta,\gamma e_i}+\omega_{\hat i}(\beta)\ps{\alpha_k,\gamma e_i}\right);\\
            &C^\sigma(\beta,\gamma e_i,\gamma e_j)= -\gamma h_2(e_i)\ps{\beta,\gamma e_i}\delta_{i\neq j}-\ps{\gamma e_i,\gamma e_j} h_2(e_i)\omega_{\hat{i}}(\beta). 
		\end{align*}
        As a consequence we can write that the integrals appearing at the regularized level are
        \begin{align*}
            &-\Is\left[\left(\frac{2 h_2(e_i)\omega_{\hat i}(\beta)\left(1+\ps{\beta,\gamma e_i}/2\right)}{(t-x)^2}-\sum_{k=1}^{2N+M}\frac{h_2(e_i)\left(\omega_{\hat i}(\alpha_k)\ps{\beta,\gamma e_i}+\omega_{\hat i}(\beta)\ps{\alpha_k,\gamma e_i}\right)}{(t-x)(t-z_k)}\right)\Psi_i(x)\right]\\
            &+\It^2\left[\delta_{i=j}\frac{h_2(e_i)w_{\hat i}(\beta)\times -\ps{\gamma e_i,\gamma e_i}}{2(t-x)(t-y)}\Psi_{i,j}(x,y)\right]+\It^2\left[\delta_{i\neq j}\frac{-2\gamma^2h_2(\beta)}{(t-x)(t-y)}\Psi_{i,j}(x,y)\right].
        \end{align*}
		Proceeding along the very same lines as in Lemma~\ref{lemma:descn} using symmetrization and integration by parts, we can rewrite the latter (up to convergent terms) as
        \begin{align*}
            &-\Is\left[\partial_x\left(\left(\frac{2 h_2(e_i)\omega_{\hat i}(\beta)}{t-x}-\sum_{k=1}^{2N+M}\frac{2h_2(e_i)\omega_{\hat i}(\alpha_k)}{x-z_k}\right)\Psi_i(x)\right)\right]\\
            &+\Is\times\It\left[\frac{h_2(e_i)\omega_{j}(\beta)\ps{\gamma e_i,\gamma e_j}}{(t-x)(x-y)}\Psi_{i,j}(x,y)\right]\\
            &+\It^2\left[\delta_{i=j}\frac{h_2(e_i)w_{\hat i}(\beta)\times \ps{\gamma e_i,\gamma e_i}}{(t-x)(y-x)}\Psi_{i,j}(x,y)\right]+\It^2\left[\delta_{i\neq j}\frac{-2\gamma^2h_2(\beta)}{(t-x)(t-y)}\Psi_{i,j}(x,y)\right].
        \end{align*}
        The only remaining two-fold integrals are thus given by
        \begin{align*}
            &\Is\times\It\left[\left(\delta_{i\neq j}\frac{-\gamma^2h_2(e_i)\omega_{j}(\beta)}{(t-x)(x-y)}+\frac{-2\gamma^2h_2(\beta)}{(t-x)(t-y)}\right)\Psi_{i,j}(x,y)\right].
        \end{align*}
        Since for $i\neq j$, $h_2(e_i)\omega_j(\beta)+2h_2(\beta)=h_2(e_i)\ps{e_i,\beta}$ the latter is actually equal (again up to convergent terms) to, as expected,
        \begin{align*}
            &\Is\times\It\left[\delta_{i\neq j}\partial_x\left(\frac{2\gamma h_2(e_i)}{y-x}\Psi_{i,j}(x,y)\right)\right]
        \end{align*}
	\end{proof}
	
	We now turn to the $W$-descendants at level higher than two.
	\begin{lemma}\label{lemma:desc_wn}
		For any positive integer $n$ and $i$ in $1,2$, set 
        \begin{align} \label{eq: true remainder wn}
			\mathfrak{W}^i_{-n,\delta,\eps,\rho}(\bm\alpha)&\coloneqq\Is\left[\partial_x\left(\left(\frac{h_2(e_i)((n-2)q+2\omega_{\hat{i}}(\beta))}{(x-t)^{n-1}}+\sum_{k=1}^{2N+M}\frac{2h_2(e_i)\omega_{\hat{i}}(\alpha_k)}{(x-t)^{n-2}(x-z_k)}\right)\Psi_{i}(x)\right)\right]\nonumber\\
			&+\Is\times \It\left[\partial_x\left(\frac{2\gamma h_2(e_i)\delta_{i\neq j}}{(x-t)^{n-2}(y-x)}\Psi_{i,j}(x,y)\right)\right].\nonumber
		\end{align}
		Then the following quantity is $(P)$-class:
		\begin{equation*}
			\begin{split}
				&\ps{\Wb_{-n}V_\beta(t)\prod_{k=1}^NV_{\alpha_k}(z_k)\prod_{l=1}^MV_{\beta_l}(s_l)}_{\delta,\eps,\rho}-\sum_{i=1}^2\mathfrak{W}^i_{-n,\delta,\eps,\rho}(\bm\alpha).
			\end{split}
		\end{equation*}
	\end{lemma}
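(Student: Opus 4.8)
The plan is to follow the template of the proof of Lemma~\ref{lemma:descn}, while importing the $W$-specific combinatorial identities already established in Lemma~\ref{lemma:desc_w2}; the one genuinely new feature is the appearance of \emph{three-fold} integrals, which originate from the cubic term in the higher-spin current.

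First I would apply the Gaussian integration by parts formula~\eqref{eq:IPP_product} to $\ps{\Wb_{-n}V_\beta(t)\V}_{\delta,\eps,\rho}$, starting from the explicit expression~\eqref{eq:W_desc} for $\Wb_{-n}^\beta[\Phi]$. The linear part $(n-1)(n-2)q^2\ps{h_2,\Phi^{(n)}}$ together with the $B$-terms produces correlation functions against rational functions and one-fold integrals $\It[\cdots\Psi_i(x)]$; the quadratic $C$- and $B$-terms produce two-fold integrals $\It^2[\cdots\Psi_{i,j}(x,y)]$; and the cubic term $-\tfrac{8}{3}\sum_{i}\sum_{j}C(\Phi^{(j+1)},\Phi^{(i-j+1)},\Phi^{(n-i-2)})$ produces three-fold integrals $\It^3[\cdots\Psi_{i,j,k}(x,y,z)]$. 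The algebraic inputs are the evaluations of $\Wb_{-n}^\beta$ on the logarithmic arguments and the tensor contractions $C^\sigma(\beta,\gamma e_i,\alpha_k)$, $C^\sigma(\beta,\gamma e_i,\gamma e_j)$ and $C(\gamma e_i,\gamma e_j,\gamma e_k)$, all of which collapse to expressions in $h_2(e_i)$ and $\omega_{\hat i}$ by the explicit computations already recorded in the proof of Lemma~\ref{lemma:desc_w2}.

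Second, as in Lemma~\ref{lemma:descn} I would split every integral via $\It=\Is+\Ir$ and discard the $\Ir$-contributions, which are $(P)$-class by Lemma~\ref{lemma:fusion}. On the singular $\Is$-parts, the symmetrization identity~\eqref{eq:sym_id} collapses the sums $\sum_p (x-t)^{-p}(y-t)^{-(n-p)}$, together with their three-variable analogues, into telescoping $\tfrac{1}{x-y}$ differences. The one-fold singular integral then assembles into the total derivative $\partial_x\big(\big(\tfrac{h_2(e_i)((n-2)q+2\omega_{\hat i}(\beta))}{(x-t)^{n-1}}+\sum_{k}\tfrac{2h_2(e_i)\omega_{\hat i}(\alpha_k)}{(x-t)^{n-2}(x-z_k)}\big)\Psi_i(x)\big)$, matching the first line of $\mathfrak{W}^i_{-n,\delta,\eps,\rho}$, while the cross term linking the one- and two-fold pieces yields the second line $\Is\times\It[\partial_x(\tfrac{2\gamma h_2(e_i)\delta_{i\neq j}}{(x-t)^{n-2}(y-x)}\Psi_{i,j}(x,y))]$, using for $i\neq j$ the identity $h_2(e_i)\omega_j(\beta)+2h_2(\beta)=h_2(e_i)\ps{e_i,\beta}$ exactly as for $n=2$.

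Third, I would verify that every genuine multi-fold remainder is $(P)$-class. For the two-fold integrals, symmetrizing the $\It^2$ contribution via~\eqref{eq:sym_id} and combining it with the $\Is\times\It$ cross term makes the singular--singular part cancel by antisymmetry in $(x,y)$, leaving $\Ir\times\It$ pieces that are $(P)$-class (the $\Ir\times\Ir$ part after a further symmetrization, the $\Ir\times\Is$ part by Lemma~\ref{lemma:fusion_integrability}). The main obstacle is the cubic contribution: I expect that iterating~\eqref{eq:sym_id} in the three variables and using the invariance of $\Psi_{i,j,k}(x,y,z)$ under simultaneous permutations, together with the symmetry of the tensor $C$, makes the totally singular $\Is^3$ part cancel, reduces the mixed $\Is^2\times\Ir$ and $\Is\times\Ir^2$ parts to total derivatives or $(P)$-class remainders, and renders the regular $\Ir^3$ part $(P)$-class. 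Checking that the $h_2$- and $\omega_{\hat i}$-weighted cubic contractions conspire to leave only the advertised total-derivative form is the technically heaviest step, but it proceeds by the same symmetrization-plus-Stokes mechanism already validated for the Virasoro descendants and for the $n=2$ higher-spin case.
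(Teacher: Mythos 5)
Your overall architecture --- Gaussian integration by parts applied to \eqref{eq:W_desc}, contraction identities collapsing the $B$- and $C$-terms to expressions in $h_2(e_i)$ and $\omega_{\hat i}$, symmetrization, assembly into total derivatives, Stokes plus fusion for the $(P)$-class verification --- is indeed the paper's mechanism, but your organization differs from the paper's in one substantive way: the paper does not prove this lemma by reducing the left-hand side on its own; its proof is deferred to (and extracted from) the proof of Theorem~\ref{thm:ward_Wn}, where $\ps{\Wb_{-n}V_\beta(t)\V}_{\delta,\eps}$ is compared with the \emph{regularized Ward right-hand side}. In that comparison the terms you propose to discard as $(P)$-class after symmetrization --- those with poles at the other insertions, e.g.\ $\frac{1}{(z_k-t)^{n-2}(z_k-x)^2}$ and $\frac{1}{(z_k-t)^{n-1}(z_k-x)}$ --- are instead identified with the one- and two-fold integral parts of the regularized descendants $\Wc^{(k)}_{-1}$, $\Wc^{(k)}_{-2}$ at the other insertions, whose convergence after their own remainder subtractions is supplied by Lemmas~\ref{lemma:desc1} and~\ref{lemma:desc_w2}. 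That bookkeeping buys the Ward identity in the same stroke and controls the behavior near the other boundary points $s_l$. Your direct route is viable for the lemma alone, essentially because you split $\It=\Is+\Ir$ \emph{before} symmetrizing, when the prefactors are singular only at $t$; but note that you will need strictly more algebraic input than what is recorded in the proof of Lemma~\ref{lemma:desc_w2}: the weighted symmetrization identity \eqref{eq: sym id 2} (its $(p-1)$-weights generate the $(n-2)$ part of the coefficient $h_2(e_i)((n-2)q+2\omega_{\hat{i}}(\beta))$), the triple identity \eqref{eq: sym id 3}, and the contractions \eqref{eq: identities I1}.

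The one concrete flaw is your expectation for the cubic term. The three-fold integral neither cancels internally nor reduces to $(P)$-class material on its own: after applying \eqref{eq: sym id 3} and the permutation symmetry of the integration variables and of $C$, it collapses, for $i\neq j$, to a surviving term proportional to $\delta_{i\neq j}\frac{C(\gamma e_i,\gamma e_j,\gamma e_j)}{(x-t)^{n-2}(x-y)(x-z)}\Psi_{i,j,j}(x,y,z)$ with $C(\gamma e_i,\gamma e_j,\gamma e_j)=-\gamma h_2(e_i)\ps{\gamma e_i,\gamma e_j}$, and this piece is precisely what absorbs the three-fold integral created when one expands the derivative in the second line of $\mathfrak{W}^i_{-n,\delta,\eps,\rho}$: the expansion of $\partial_x\Psi_{i,j}(x,y)$ contains the term $-\It\big[\frac{\ps{\gamma e_i,\gamma e_f}}{2(z-x)}\Psi_{i,j,f}(x,y,z)\big]$, which must be matched. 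If the totally singular $\Is^3$ part vanished as you predict, then $\frac{2\gamma h_2(e_i)\delta_{i\neq j}}{(x-t)^{n-2}(y-x)}\Psi_{i,j}(x,y)$ could not be written as a total derivative and your claimed remainder would not close. So the cubic contribution is a constitutive ingredient of the advertised total-derivative structure, not a separately negligible remainder; with that correction (and the additional identities above) your plan coincides with the computation the paper carries out inside the proof of Theorem~\ref{thm:ward_Wn}.
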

	\begin{proof}
		The result follows from the proof of Theorem~\ref{thm:ward_Wn} in the next subsection.
	\end{proof}
	
	\begin{defi} \label{defi: desc wn}
		Take $t\in\R$ and $(\beta,\bm\alpha)\in\mc A_{N,M+1}$. For $n\geq 3$ we define the descendant field $\Wb_{-n}V_\beta$ within half-plane correlation functions by the limit
		\begin{equation*}
			\begin{split}
				&\ps{\Wb_{-n}V_\beta(t)\prod_{k=1}^NV_{\alpha_k}(z_k)\prod_{l=1}^MV_{\beta_l}(s_l)}\coloneqq\\
				&\lim\limits_{\delta,\eps,\rho\to0}\ps{\Wb_{-n}V_\beta(t)\prod_{k=1}^NV_{\alpha_k}(z_k)\prod_{l=1}^MV_{\beta_l}(s_l)}_{\delta,\eps,\rho}-\sum_{i=1}^2\tilde{\mathfrak{W}}^i_{-n,\delta,\eps,\rho}(\bm\alpha).
			\end{split}
		\end{equation*}
	\end{defi}
	Like before, these limits are well-defined and analytic in a complex neighborhood of $\mc A_{N,M+1}$. The explicit expression for the remainder is 
        \begin{equation} \label{eq: remainder wn tilda}
            \begin{split}
            &\tilde{\mathfrak{W}}^i_{-n,\delta,\eps,\rho}(\bm\alpha) \\
            &=h_2(e_i)((n-2)q+2\omega_{\hat{i}}(\beta))\left(\mu_{i,L}\frac{\Psi_i(t-\eps)}{(-\eps)^{n-1}}-\mu_{i,R}\frac{\Psi_i(t-\eps)}{\eps^{n-1}}\right)\\
            &+\sum_{k} 2h_2(e_i)\omega_{\hat{i}}(\alpha_k)\left(\mu_{i,L}\frac{\Psi_i(t-\eps)}{(-\eps)^{n-2}(t-\eps-z_k)}-\mu_{i,R}\frac{\Psi_i(t+\eps)}{\eps^{n-2}(t+\eps-z_k)}\right)\\
            &-\mu_{B,i}\int_{(t-r,t+r)} \Im \left( \frac{h_2(e_i)((n-2)q+2\omega_{\hat{i}}(\beta))}{(x+i\delta-t)^{n-1}} + \sum_k \frac{2h_2(e_i)\omega_{\hat{i}}(\alpha_k)}{(x+i\delta-t)^{n-2}(x+i\delta-z_k)}\right)\Psi_i(x+i\delta)dx\\
            &+\It\left[2\gamma h_2(e_i)\delta_{i\neq j}\left(\mu_{i,L}\frac{\Psi_{i,j}(t-\eps,y)}{(s_l-\eps-t)^{n-2}(y-s_l+\eps)}-\mu_{i,R}\frac{\Psi_{i,j}(t+\eps,y)}{(s_l+\eps-t)^{n-2}(y-s_l-\eps)}\right)\right.\\
            &\left.+\mu_{B,i}\int_{(t-r,t+r)} \left(\frac{2\gamma h_2(e_i)\delta_{i\neq j}}{(x+i\delta-t)^{n-2}(y-x-i\delta)}-\frac{2\gamma h_2(e_i)\delta_{i\neq j}}{(x-i\delta-t)^{n-2}(y-x+i\delta)}\right)\Psi_{i,j}(x+i\delta,y)dx\right]
            \end{split}
        \end{equation}
	
	\subsubsection{Ward identities for the higher-spin current}
	In the same fashion as the Virasoro descendants, the insertion of $W$-descendants at order higher than three within correlation functions gives rise to Ward identities but this time associated to higher-spin symmetry. They take the following form:
	\begin{theorem}\label{thm:ward_Wn}
		Take $t\in\R$ and $(\beta,\bm\alpha)\in\mc A_{N,M+1}$. Then for any $n\geq3$:
		\begin{equation} \label{eq:ward_Wn}
			\begin{split}
                &\ps{\Wb_{-n}V_\beta(t)\prod_{k=1}^NV_{\alpha_k}(z_k)\prod_{l=1}^MV_{\beta_l}(s_l)}= \\
                  &\left(\sum_{k=1}^{2N+M}\frac{-\Wc_{-2}^{(k)}}{(z_k-t)^{n-2}}+\frac{(n-2)\Wc_{-1}^{(k)}}{(z_k-t)^{n-1}}-\frac{(n-1)(n-2)w(\alpha_k)}{2(z_k-t)^n}\right) \ps{V_\beta(t)\prod_{k=1}^NV_{\alpha_k}(z_k)\prod_{l=1}^MV_{\beta_l}(s_l)}
			\end{split}
		\end{equation}
		where for $j=1,2$, $\Wc_{-j}^{(k)}\ps{V_\beta(t)\V}\coloneqq \ps{\Wb_{-j}V_{\alpha_k}(z_k)\prod_{l\neq k}V_{\alpha_l}(z_l)}$, and $\Wc_{-j}^{(k)} = \overline{\Wc}_{-j}^{(k-N)}$ for $k\in\{N+1,...,2N\}$.
	\end{theorem}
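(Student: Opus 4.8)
The plan is to transpose, essentially line by line, the argument used for the conformal Ward identity in Theorem~\ref{thm:ward_vir}, the genuinely new input being the combinatorics of the cubic current $\Wb$. I would fix $\delta,\eps,\rho>0$ and form the regularized combination
\[
\Big(\Wc_{-n}-\sum_{k=1}^{2N+M}\Big(\frac{-\Wc_{-2}^{(k)}}{(z_k-t)^{n-2}}+\frac{(n-2)\Wc_{-1}^{(k)}}{(z_k-t)^{n-1}}-\frac{(n-1)(n-2)w(\alpha_k)}{2(z_k-t)^n}\Big)\Big)\ps{V_\beta(t)\V}_{\delta,\eps,\rho}-\sum_{i=1}^2\mathfrak R^i,
\]
where $\Wc_{-n}\ps{V_\beta(t)\V}_{\delta,\eps,\rho}\coloneqq\ps{\Wb_{-n}V_\beta(t)\V}_{\delta,\eps,\rho}$, the descendants $\Wc_{-1}^{(k)},\Wc_{-2}^{(k)}$ are expanded through their own integration by parts expressions coming from Lemmas~\ref{lemma:desc1} and~\ref{lemma:desc_w2}, and $\mathfrak R^i$ gathers the remainder $\tilde{\mathfrak W}^i_{-n}$ of~\eqref{eq: remainder wn tilda} together with the boundary remainders of the $\Wb_{-1},\Wb_{-2}$ descendants attached to the insertions $\beta_l$, weighted by $(s_l-t)^{-(n-1)}$ and $(s_l-t)^{-(n-2)}$ respectively (the exact analogue of the $\mathfrak R^i$ of~\eqref{eq:to_prove_Ln}). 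The goal is to show this quantity tends to $0$ as $\rho,\eps,\delta\to0$.

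Because $\Wb_{-n}^\beta$ is cubic in the derivatives of $\Phi$, the integration by parts formula~\eqref{eq:IPP_product} generates one-, two- and three-fold integrals over $\Heps$ and $\Reps$ against the correlators $\Psi_i$, $\Psi_{i,j}$ and a three-fold analogue. The first task is to record the general-$n$ versions of the three combinatorial identities displayed in the proof of Lemma~\ref{lemma:desc_w2}, namely the values of $\Wb_{-n}^\beta$ on the logarithmic profiles $\gamma e_i\ln\frac1{\norm{\cdot-t}}$ and $\alpha_k\ln\frac1{\norm{\cdot-t}}$ and the reductions of $C^\sigma(\beta,\gamma e_i,\cdot)$ in terms of $h_2(e_i)$ and $\omega_{\hat i}$; these furnish in particular the quantum number $w(\alpha_k)$ as the diagonal self-contraction at $z_k$, matching the subtracted term. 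I would then collapse the resulting power sums with the symmetrization identity~\eqref{eq:sym_id} and exploit the symmetry of the integrands under permutation of the integration variables to annihilate the diagonal parts of the two- and three-fold integrals, exactly as the $\Heps^1\times\Heps^1$ contribution dropped out in Lemma~\ref{lemma:descn}. After these cancellations the singular part of the whole combination should consolidate into a single integral of a total derivative over the full domain, $\It\big[\partial_x(\cdots)\big]$, whose integrand is precisely the one appearing inside $\sum_i\mathfrak W^i_{-n}$ in Lemma~\ref{lemma:desc_wn}; establishing this is simultaneously the proof of that lemma.

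It then remains to apply Stokes' formula to $\It\big[\partial_x(\cdots)\big]$ over $\Heps$ and $\Reps$. The boundary pieces at distance $\eps$ around $t$, together with the $\pm i\delta$ line integrals, reproduce $\tilde{\mathfrak W}^i_{-n}$; the pieces localized at the boundary insertions $s_l$ reproduce the $\Wb_{-1},\Wb_{-2}$ boundary remainders weighted by the corresponding powers of $(s_l-t)$, so that they cancel against $\mathfrak R^i$; the contour integrals $\oint_{\partial B(z_k,\eps)}$ around the bulk insertions vanish in the limit by the fusion asymptotics of Lemma~\ref{lemma:fusion}; and the contribution at infinity vanishes by Lemma~\ref{lemma:inf_integrability_toda}. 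This proves the identity whenever $\ps{\beta,e_i}<0$ for $i=1,2$, and the analyticity guaranteed by the $(P)$-class property then extends it to all of $\mc A_{N,M+1}$. For a bulk insertion, the contour around $\bar z_k$ produces the complex-conjugated descendant, accounting for the prescription $\Wc_{-j}^{(k)}=\overline{\Wc}_{-j}^{(k-N)}$.

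The main obstacle is the combinatorial reduction of the second paragraph: verifying that the cubic $C$-contributions and the quadratic $B$-contributions produced on the left by $\Wb_{-n}^\beta$, minus those produced on the right by the descendants $\Wb_{-2}^{\alpha_k}$ and $\Wb_{-1}^{\alpha_k}$, cancel down to precisely the single total derivative of Lemma~\ref{lemma:desc_wn}. Two features make this delicate: one must track which three-fold $C$-terms are eliminated by antisymmetrization in the integration variables and which survive to combine with the lower-order terms, and one must check that the $\frac1{x-z_k}$ poles carried by the integrand cancel between the two sides, so that the contour integrals around the $z_k$ genuinely vanish rather than leaving a spurious residue. There is no conceptual shortcut here, but the computation is rigidly constrained by the requirement that its output match the total-derivative structure already validated in the Virasoro case.
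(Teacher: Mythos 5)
Your proposal follows essentially the same route as the paper's proof: regularize, apply Gaussian integration by parts to produce one-, two- and three-fold integrals, reduce them via the algebraic identities for $B$ and $C$ together with symmetrization and permutation symmetry of the integration variables to a single total derivative (which, as you correctly note, simultaneously proves Lemma~\ref{lemma:desc_wn}), then apply Stokes' formula so that the boundary contributions at $t$ and at the $s_l$ reproduce the remainder $\mathfrak{R}^i$ while the contours around the $z_k$ and the contributions at infinity vanish by Lemmas~\ref{lemma:fusion} and~\ref{lemma:inf_integrability_toda}. The only cosmetic differences are that the paper anchors the computation with an explicit free-field case, uses two further weighted symmetrization identities beyond~\eqref{eq:sym_id} for the $(p-1)$-weighted and triple-variable power sums, and needs a first-order $\eps$-expansion of $\bigl((s_l\pm\eps-t)^{n-2}\eps\bigr)^{-1}$ to recover one $(n-2)$-weighted term near each $s_l$ --- all details internal to the plan you describe.
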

	\begin{proof}
		\emph{The free-field theory.} In order to make the exposition as clear as possible we start by considering the free-field theory, that is when $\mu_i = \mu_i^\partial = 0$. In this setting we have to assume that $\sum_k \alpha_k + \frac12\sum_l \beta_l - Q=0$ in order for the Gaussian integration by parts formula (Lemma \ref{lemma:GaussianIPP}) to be valid. To start with from the explicit definitions of the $W$-descendants and thanks to Lemma \ref{lemma:GaussianIPP} we see that the right-hand side of \eqref{eq:ward_Wn} is given by 
         \begin{align} \label{eq:ward rhs free field}
			&\left(\sum_{k=1}^{2N+M}-\frac{\Wc_{-2}^{(k)}}{(z_k-t)^{n-2}}+\frac{(n-2)\Wc_{-1}^{(k)}}{(z_k-t)^{n-1}}-\frac{(n-1)(n-2)w(\alpha_k)}{2(z_k-t)^n}\right) \ps{V_\beta(t)\V}_{\delta,\eps}^{FF} \\
            &= -\left[\sum_{k=1}^{2N+M}\left(  \frac{(n-1)(n-2)w(\alpha_k)-(n-2)(qB(\alpha_k,\beta)+2C(\alpha_k,\alpha_k,\beta))}{2(z_k-t)^n}\right.\right.\nonumber\\
            &\left.+\frac{q(B(\beta,\alpha_k)-B(\alpha_k,\beta))+2C(\beta-\alpha_k,\alpha_k,\beta)}{2(z_k-t)^n}\right)\nonumber\\
			&+\sum_{l\neq k} \left( \frac{-(n-2)(q B(\alpha_k,\alpha_l)+2C(\alpha_k,\alpha_k,\alpha_l))+2C(\alpha_k,\beta,\alpha_l)+2C(\alpha_k,\alpha_l,\beta)}{2(z_k-t)^{n-1}(z_k-z_l)} \right.\nonumber\\
            &+\left.\frac{q(B(\alpha_l,\alpha_k)-B(\alpha_k,\alpha_l))+2C(\alpha_l-\alpha_k,\alpha_k,\alpha_l)}{2(z_k-t)^{n-2}(z_k-z_l)^2} \right)\nonumber\\
            &+\left.\sum_{m\neq l,k} \frac{C(\alpha_k,\alpha_l,\alpha_m)}{(z_k-t)^{n-2}(z_k-z_l)(z_k-z_m)} \right] \ps{V_\beta(t)\V}_{\delta,\eps}^{FF} \nonumber
		\end{align}
        where the notation $\ps{\cdot}^{FF}$ refers to correlation functions with respect to the free field. 
        
        On the other hand, based on the defining formula~\eqref{eq:W_desc} and using again Lemma \ref{lemma:GaussianIPP} we see that the left-hand side of \eqref{eq:ward_Wn} is equal to :
		\begin{align} \label{eq:wn free field}
			&\ps{\Wb_{-n}V_\beta(t)\V}^{FF}_{\delta,\eps} \\
            &= -\left[\sum_{k=1}^{2N+M} \left( \frac{qB(\beta,\alpha_k)+2C(\beta,\beta,\alpha_k)-(n-1)(qB(\alpha_k,\beta)+2C(\alpha_k,\alpha_k,\beta))}{2(z_k-t)^n}\right.\right.\nonumber\\
            &\left.+\frac{(n-1)(n-2)(-q^2h_2(\alpha_k)+\frac{q}{2}B(\alpha_k,\alpha_k)+\frac13 C(\alpha_k,\alpha_k,\alpha_k))}{2(z_k-t)^n}\right)\nonumber\\
			&+ \sum_{l\neq k} \left( \sum_{p=1}^{n-1} \frac{q (p-1) B(\alpha_k,\alpha_l)-2C(\alpha_k,\alpha_l,\beta)+2(p-1)C(\alpha_k,\alpha_k,\alpha_l)}{2(z_k-t)^{p}(z_l-t)^{n-p}} \right.\nonumber\\
			& + \left.\left.\sum_{m\neq l,k} \sum_{p_1=1}^{n-2}\sum_{p_2=1}^{p_1}\frac{C(\alpha_k,\alpha_l,\alpha_m)}{3(z_k-t)^{p_2}(z_l-t)^{p_1-p_2+1}(z_m-t)^{n-p_1-1}} \right)\right] \ps{V_\beta(t)\V}_{\delta,\eps}^{FF}. \nonumber
		\end{align}
		Elementary calculations involving the explicit expressions for $B$ and $C$ show that the $(z_k-t)^{-n}$ terms in \eqref{eq:wn free field} and \eqref{eq:ward rhs free field} are actually equal.  In addition to the symmetrization identity from Equation~\eqref{eq:sym_id} we use the following additional symmetrization identities to transform the terms involving several insertions in Equation~\eqref{eq:wn free field}:
		\begin{equation} \label{eq: sym id 2}
			\sum_{p=1}^{n-1} \frac{p-1}{(t-x)^p(t-y)^{n-p}} = \frac{1}{(y-x)^2}\left(\frac{1}{(t-y)^{n-2}}-\frac{1}{(t-x)^{n-2}}\right)-\frac{n-2}{(y-x)(t-x)^{n-1}};
		\end{equation}
		\begin{align} \label{eq: sym id 3}
			\sum_{p_1=1}^{n-2}\sum_{p_2=1}^{p_1} &\frac{1}{(t-x)^{p_2}(t-y)^{p_1-p_2+1}(t-z)^{n-p_1-1}} = \frac{1}{(z-y)(z-x)(t-z)^{n-2}}\\ &+\frac{1}{(y-x)(y-z)(t-y)^{n-2}}+\frac{1}{(x-y)(x-z)(t-x)^{n-2}} \cdot \nonumber
		\end{align}
        Now we can use Equations~\eqref{eq:sym_id} and~\eqref{eq: sym id 2} to symmetrize the terms involving $k,l$ insertions in the left-hand side of the Ward identity~\eqref{eq:wn free field}. Doing so explicit computations show that this yields precisely the corresponding term in the right-hand side of Equation~\eqref{eq:ward rhs free field}. For the last line of~\eqref{eq:wn free field} we use the identity~\eqref{eq: sym id 3} together with symmetry between the $z_k$, $z_l$ and $z_m$ variables: we then see that this exactly matches the last line of Equation~\eqref{eq:ward rhs free field}. 
        Finally, we deduce that the Ward identity for the free-field theory holds true:
		\begin{equation*}
			\begin{split}
				&\ps{\Wb_{-n}V_\beta(t)\V}^{FF}\\
                &=\left(\sum_{k=1}^{2N+M}\frac{-\Wc_{-2}^{(k)}}{(z_k-t)^{n-2}}+\frac{(n-2)\Wc_{-1}^{(k)}}{(z_k-t)^{n-1}}-\frac{(n-1)(n-2)w(\alpha_k)}{2(z_k-t)^n}\right) \ps{V_\beta(t)\V}^{FF}
			\end{split}
		\end{equation*}
        after taking the limit as $\eps,\delta\to0$.

        \emph{The full theory.}
		The reasoning is the same in the presence of the GMC potential except that we have to work at the regularized level and keep track of the remainder terms that may occur. At such our goal is to show that
        \begin{equation} \label{eq:ward wn with remainder}
            \begin{split}
                &\lim_{\delta,\eps,\rho\to 0}\left(\Wc_{-n}-\left(\sum_{k=1}^{2N+M}\frac{-\Wc_{-2}^{(k)}}{(z_k-t)^{n-2}}+\frac{(n-2)\Wc_{-1}^{(k)}}{(z_k-t)^{n-1}}-\frac{(n-1)(n-2)w(\alpha_k)}{2(z_k-t)^n}\right)\right) \ps{V_\beta(t)\V}_{\delta,\eps,\rho}\\
                &-\sum_{i=1}^2 \mathfrak{R}^i(\delta,\eps,\rho;\bm\alpha) = 0,\qt{with}\\
                &\mathfrak{R}^i(\delta,\eps,\rho;\bm\alpha) := \tilde{\mathfrak{W}}^i_{-n,\delta,\eps,\rho}(\bm\alpha) - \sum_{l=1}^M \frac{-\tilde{\mathfrak{W}}^i_{-2,\delta,\eps,\rho}(\bm\alpha;\beta_l)}{(s_l-t)^{n-2}}+\frac{(n-2)\tilde{\mathfrak{W}}^i_{-1,\delta,\eps,\rho}(\bm\alpha;\beta_l)}{(s_l-t)^{n-1}}\cdot
            \end{split}
        \end{equation}
        In order to achieve this, we use that the correlation function $\ps{\Wb_{-n}V_\beta(t)\V}_{\delta,\eps}$ is the sum of four terms $I_0+I_1+I_2+I_3$, corresponding respectively to the free-field (i.e. not integrated) part, and $1$, $2$ and $3$-fold integrals. Likewise the terms in the right-hand side of Equation~\eqref{eq:ward_Wn} take the form $\tilde{I}_0+\tilde{I}_1+\tilde{I}_2$. The equality $I_0=\tilde{I}_0$ has already been checked since it amounts to treating the free-field case. Therefore our goal is to show that the difference $I_1+I_2+I_3-(\tilde I_1+\tilde I_2+\tilde I_3)$ is given by the corresponding remainder up to vanishing terms.
        \\

        \emph{One-fold integrals.} And to start with let us first compute the part in the right-hand side of the Ward identity~\eqref{eq:ward_Wn} that involves only one-fold integrals. Using again the definitions of the descendants we find by Gaussian integration by parts
        \begin{align*}  
			\tilde{I}_1 &= \It \left[ \sum_{k=1}^{2N+M}\left( \frac{-(n-2)(qB(\alpha_k,\gamma e_i)+2C(\alpha_k,\alpha_k,\gamma e_i))+2C(\alpha_k,\gamma e_i,\beta)+2C(\alpha_k,\beta,\gamma e_i)}{2(z_k-t)^{n-1}(z_k-x)}  \right.\right.\\
            &+\frac{q(B(\gamma e_i,\alpha_k)-B(\alpha_k,\gamma e_i))+2C(\gamma e_i-\alpha_k,\alpha_k,\gamma e_i)}{2(z_k-t)^{n-2}(z_k-x)^2}\nonumber\\
            &+ \left.\left.\sum_{l\neq k} \frac{C(\alpha_k,\gamma e_i,\alpha_l)+C(\alpha_k,\alpha_l,\gamma e_i)}{(z_k-t)^{n-2}(z_k-z_l)(z_k-x)} \right)\Psi_i(x)\right] \nonumber\\
            &+\sum_{i=1}^2\mu_{B,i} \int_{\H_{\delta,\eps}} \sum_{k=1}^{2N+M} \frac{2C(\alpha_k,\gamma e_i,\gamma e_i)}{(z_k-t)^{n-2}(x-z_k)(\bar{x}-z_k)}\Psi_i(x) dxd\bar{x}.\nonumber
		\end{align*}
        As for the left-hand side of Equation~\eqref{eq:ward_Wn}, we find by the same method:
        \begin{align*}
			I_1 &= \It \left[ \left(\frac{qB(\beta,\gamma e_i)+2C(\beta,\beta,\gamma e_i)-(n-1)(qB(\gamma e_i,\beta)+2C(\beta,\gamma e_i,\gamma e_i))}{2(x-t)^n} \right.\right.\\
            &+\frac{(n-1)(n-2)(-q^2h_2(\gamma e_i)+\frac{q}{2}B(\gamma e_i,\gamma e_i)+\frac13 C(\gamma e_i,\gamma e_i,\gamma e_i))}{2(x-t)^n}\nonumber\\
            &+ \sum_{k=1}^{2N+M} \left( \sum_{p=1}^{n-1} \frac{(p-1)(qB(\gamma e_i,\alpha_k)+2C(\gamma e_i,\gamma e_i,\alpha_k))-2C(\gamma e_i,\alpha_k,\beta)}{2(x-t)^{p}(z_k-t)^{n-p}}\right. \nonumber\\
            &+\frac{(p-1)(qB(\alpha_k,\gamma e_i) + 2 C(\alpha_k,\alpha_k,\gamma e_i))-2C(\alpha_k,\gamma e_i,\beta)}{(z_k-t)^{p}(x-t)^{n-p}} \nonumber\\
            &+\frac13\sum_{l\neq k}\sum_{p_1=1}^{n-2}\sum_{p_2=1}^{p_1} \left( \frac{C(\alpha_k,\gamma e_i,\alpha_l)}{(z_k-t)^{p_2}(x-t)^{p_1-p_2+1}(z_l-t)^{n-p_1-1}}+\frac{C(\gamma e_i,\alpha_k,\alpha_l)}{(x-t)^{p_2}(z_k-t)^{p_1-p_2+1}(z_l-t)^{n-p_1-1}}\right.\nonumber\\ 
            &\left.\left.\left.\left.+\frac{C(\alpha_k,\alpha_l,\gamma e_i)}{(z_k-t)^{p_2}(z_l-t)^{p_1-p_2+1}(x-t)^{n-p_1-1}} \right)\right) \right)\Psi_i(x)\right] \nonumber\\
            &+\sum_{i=1}^2 \mu_{B,i} \int_{\H_{\delta,\eps}}\left( \sum_{p=1}^{n-1}\frac{(p-1)(qB(\gamma e_i,\gamma e_i)-2C(\gamma e_i,\gamma e_i,\gamma e_i))+2C(\gamma e_i,\gamma e_i,\beta)}{2(x-t)^p(\bar{x}-t)^{n-p}}\right.\nonumber\\
            &+\frac13 \sum_{k=1}^{2N+M} \sum_{p_1=1}^{n-2}\sum_{p_2=1}^{p_1} \left( \frac{C(\alpha_k,\gamma e_i,\gamma e_i) }{(z_k-t)^{p_2}(x-t)^{p_1-p_2+1}(\bar{x}-t)^{n-p_1-1}}+\frac{C(\alpha_k,\gamma e_i,\gamma e_i)}{(x-t)^{p_2}(z_k-t)^{p_1-p_2+1}(\bar{x}-t)^{n-p_1-1}}\right. \nonumber\\
            &\left.\left. +\frac{C(\alpha_k,\gamma e_i,\gamma e_i)}{(x-t)^{p_2}(\bar{x}-t)^{p_1-p_2+1}(z_k-t)^{n-p_1-1}}\right)\right)\Psi_i(x)dxd\bar{x}. \nonumber
		\end{align*}
        First of all algebraic computations show that the $(t-x)^{-n}$ coefficient in the above equals 
        \begin{equation*}
            h_2(e_i)((n-2)q+2\omega_{\hat{i}}(\beta))(n-1+\frac{\ps{\beta,\gamma e_i}}{2}) =: J_{-n}^\beta(e_i).
        \end{equation*}
        Besides, we can  symmetrize the $I_1$ term like before to obtain 
        \begin{align*}
			I_1 &= \It \left[\left(\frac{J_{-n}^\beta(e_i)}{(x-t)^n}+ \sum_{k=1}^{2N+M}\left(\frac{q(B(\gamma e_i,\alpha_k)-B(\alpha_k,\gamma e_i))+2C(\gamma e_i-\alpha_k,\alpha_k,\gamma e_i)}{2(z_k-t)^{n-2}(z_k-x)^2}\right.\right.\right.\\
            &+\frac{q(B(\alpha_k,\gamma e_i)-B(\gamma e_i,\alpha_k))+2C(\alpha_k-\gamma e_i,\alpha_k,\gamma e_i)}{2(x-t)^{n-2}(z_k-x)^2}\\
            &-\frac{-(n-2)(qB(\gamma e_i,\alpha_k)+2C(\gamma e_i,\gamma e_i,\alpha_k))+2C(\alpha_k,\gamma e_i,\beta)+2C(\alpha_k,\beta,\gamma e_i)}{2(x-t)^{n-1}(z_k-x)} \nonumber\\
            &+ \frac{-(n-2)(q B(\alpha_k,\gamma e_i)+2C(\alpha_k,\alpha_k,\gamma e_i))+2C(\alpha_k,\gamma e_i,\beta)+2C(\alpha_k,\beta,\gamma e_i)}{2(z_k-t)^{n-1}(z_k-x)}  \nonumber\\
            &+ \left.\left.\sum_{l\neq k} \frac{C(\alpha_k,\gamma e_i,\alpha_l)+C(\alpha_k,\alpha_l,\gamma e_i)}{(z_k-t)^{n-2}(z_k-z_l)(z_k-x)}+\frac{C(\gamma e_i,\alpha_k,\alpha_l)}{(x-t)^{n-2}(z_k-x)(z_l-x)} \right) \Psi_i(x)\right] \nonumber\\
            &+\sum_{i=1}^2\mu_{B,i} \int_{\H_{\delta,\eps}} \left(\frac{(n-2)(qB(\gamma e_i,\gamma e_i)+2C(\gamma e_i,\gamma e_i,\gamma e_i))-4C(\gamma e_i,\gamma e_i,\beta)}{2(x-t)^{n-1}(x-\bar{x})} \right.\nonumber\\
            &\left.+\sum_{k=1}^{2N+M} \frac{2C(\alpha_k,\gamma e_i,\gamma e_i)}{(x-t)^{n-2}(z_k-x)(x-\bar{x})}+\frac{2C(\alpha_k,\gamma e_i,\gamma e_i)}{(z_k-t)^{n-2}(z_k-x)(z_k-\bar{x})}\right)\Psi_i(x) dxd\bar{x}.\nonumber
		\end{align*}
        Therefore we can write that the difference $I_1-\tilde I_1$ is given by
        \begin{align*}
			&\It \left[\left(\frac{J_{-n}^\beta(e_i)}{(x-t)^n}+ \sum_{k=1}^{2N+M}\left(\frac{q(B(\alpha_k,\gamma e_i)-B(\gamma e_i,\alpha_k))+2C(\alpha_k-\gamma e_i,\alpha_k,\gamma e_i)}{2(x-t)^{n-2}(z_k-x)^2}\right.\right.\right.\\
            &+\frac{(n-2)(qB(\gamma e_i,\alpha_k)+2C(\gamma e_i,\gamma e_i,\alpha_k))-2C(\alpha_k,\gamma e_i,\beta)-2C(\alpha_k,\beta,\gamma e_i)}{2(x-t)^{n-1}(z_k-x)} \nonumber\\
            &+ \left.\left.\sum_{l\neq k} \frac{C(\gamma e_i,\alpha_k,\alpha_l)}{(x-t)^{n-2}(z_k-x)(z_l-x)} \right) \Psi_i(x)\right] \nonumber\\
            &+\sum_{i=1}^2\mu_{B,i} \int_{\H_{\delta,\eps}} \left(\frac{(n-2)(qB(\gamma e_i,\gamma e_i)+2C(\gamma e_i,\gamma e_i,\gamma e_i))-4C(\gamma e_i,\gamma e_i,\beta)}{2(x-t)^{n-1}(x-\bar{x})} \right.\nonumber\\
            &\left.+\sum_{k=1}^{2N+M} \frac{2C(\alpha_k,\gamma e_i,\gamma e_i)}{(x-t)^{n-2}(z_k-x)(x-\bar{x})}\right)\Psi_i(x) dxd\bar{x}.\nonumber
		\end{align*}
        Now we can use the following identities (proved using the explicit expressions for $B$ and $C$):
        \begin{align} \label{eq: identities I1}
            &q B(\gamma e_i,\alpha_k)+2C(\gamma e_i,\gamma e_i,\alpha_k)=q h_2(e_i)\ps{\gamma e_i,\alpha_k}+4h_2(e_i)\omega_{\hat i}(\alpha_k);\\
            &C(\alpha_k,\gamma e_i,\beta)+C(\alpha_k,\beta,\gamma e_i)=-h_2(e_i)\left(\omega_{\hat i}(\beta)\ps{\gamma e_i,\alpha_k}+\omega_{\hat i}(\alpha_k)\ps{\gamma e_i,\beta}\right);\nonumber\\
            &q(B(\alpha_k,\gamma e_i)-B(\gamma e_i,\alpha_k))+2C(\alpha_k-\gamma e_i,\alpha_k,\gamma e_i)=-4h_2(e_i)\omega_{\hat i}(\alpha_k)\left(1+\frac{\ps{\gamma e_i,\alpha_k}}{2}\right) \nonumber
        \end{align}
        to rewrite the above under the form
        \begin{align*} 
			&\It \left[\left(\frac{h_2(e_i)((n-2)q+2\omega_{\hat{i}}(\beta))(n-1+\frac{\ps{\beta,\gamma e_i}}{2})}{(x-t)^n}+ \sum_{k=1}^{2N+M}\left(\frac{-4h_2(e_i)\omega_{\hat i}(\alpha_k)\left(1+\frac{\ps{\gamma e_i,\alpha_k}}{2}\right)}{2(x-t)^{n-2}(z_k-x)^2}\right.\right.\right.\\
            &+\frac{(n-2)(q h_2(e_i)\ps{\gamma e_i,\alpha_k}+4h_2(e_i)\omega_{\hat i}(\alpha_k))+2h_2(e_i)\left(\omega_{\hat i}(\beta)\ps{\gamma e_i,\alpha_k}+\omega_{\hat i}(\alpha_k)\ps{\gamma e_i,\beta}\right)}{2(x-t)^{n-1}(z_k-x)} \nonumber\\
            &+ \left.\left.\sum_{l\neq k} \frac{-4h_2(e_i)\omega_{\hat i}(\alpha_k)\ps{\gamma e_i,\alpha_l}}{(x-t)^{n-2}(z_k-x)(z_l-x)} \right) \Psi_i(x)\right] \nonumber\\
            &+\sum_{i=1}^2\mu_{B,i} \int_{\H_{\delta,\eps}} \left(\frac{(n-2)(qB(\gamma e_i,\gamma e_i)+2C(\gamma e_i,\gamma e_i,\gamma e_i))-4C(\gamma e_i,\gamma e_i,\beta)}{2(x-t)^{n-1}(x-\bar{x})} \right.\nonumber\\
            &\left.+\sum_{k=1}^{2N+M} \frac{2C(\alpha_k,\gamma e_i,\gamma e_i)}{(x-t)^{n-2}(z_k-x)(x-\bar{x})}\right)\Psi_i(x) dxd\bar{x}.\nonumber
		\end{align*}
        Like before we recognize derivatives of the correlation functions. More precisely we end up with
        \begin{align}  \label{eq: one fold int derivatives}
			&I_1-\tilde I_1=-\It \left[\partial_x\left(\left(\frac{-h_2(e_i)((n-2)q+2\omega_{\hat{i}}(\beta))}{(x-t)^{n-1}}+ \sum_{k=1}^{2N+M}\frac{2h_2(e_i)\omega_{\hat i}(\alpha_k)}{(x-t)^{n-2}(z_k-x)}\right)\Psi_i(x)\right)\right]\\
            &+\It\times\It \left[\left(\frac{-h_2(e_i)((n-2)q+2\omega_{\hat{i}}(\beta))\ps{\gamma e_i,\gamma e_j}}{(x-t)^{n-1}(x-y)}+ \sum_{k=1}^{2N+M}\frac{2h_2(e_i)\omega_{\hat i}(\alpha_k)\ps{\gamma e_i,\gamma e_j}}{(x-t)^{n-2}(z_k-x)(x-y)}\right)\Psi_i(x)\right] \nonumber\\
            &+\sum_{i=1}^2\mu_{B,i} \int_{\H_{\delta,\eps}} \left(\frac{(n-2)(qB(\gamma e_i,\gamma e_i)+2C(\gamma e_i,\gamma e_i,\gamma e_i))-4C(\gamma e_i,\gamma e_i,\beta)}{2(x-t)^{n-1}(x-\bar{x})} \right.\nonumber\\
            &\left.+\sum_{k=1}^{2N+M} \frac{2C(\alpha_k,\gamma e_i,\gamma e_i)}{(x-t)^{n-2}(z_k-x)(x-\bar{x})}\right)\Psi_i(x) dxd\bar{x}.\nonumber
		\end{align}
        The first integral corresponds exactly to the one in Lemma~\ref{lemma:desc_wn}. The terms involving $x,\bar{x}$ are treated exactly the same way using the identities \eqref{eq: identities I1} with either $\beta$ or $\alpha_k$ replaced by $\gamma e_i$. The second line of the above will show up when considering the two-fold integrals, which is our next task. \\

        \emph{Two-fold integrals.} The two-fold integrals in the l.h.s. of the Ward identity~\eqref{eq:ward_Wn} are given by (we voluntarily omit the terms involving combinations of the variables $x,y$ with $\bar{x}$ or $\bar{y}$, since they can be dealt with in the exact same way)
        \begin{align*}
			I_2 &= -I_{tot}^2 \left[ \left(\sum_{p=1}^{n-1} \frac{q(p-1)B(\gamma e_i,\gamma e_j) - 2C(\gamma e_i,\gamma e_j,\beta) + 2(p-1)C(\gamma e_i,\gamma e_i,\gamma e_j)}{2(x-t)^{p}(y-t)^{n-p}} \right.\right.\\
            &+\frac13\sum_{k=1}^{2N+M}\sum_{p_1=1}^{n-2}\sum_{p_2=1}^{p_1} \left( \frac{C(\alpha_k,\gamma e_i,\gamma e_j)}{(z_k-t)^{p_2}(x-t)^{p_1-p_2+1}(y-t)^{n-p_1-1}}+\frac{C(\gamma e_i,\alpha_k,\gamma e_j)}{(x-t)^{p_2}(z_k-t)^{p_1-p_2+1}(y-t)^{n-p_1-1}}\right.\nonumber\\ 
            &\left.\left.\left.+\frac{C(\gamma e_i,\gamma e_j,\alpha_k)}{(x-t)^{p_2}(y-t)^{p_1-p_2+1}(z_k-t)^{n-p_1-1}} \right) \right)\Psi_{i,j}(x,y)\right]. \nonumber
		\end{align*}
        Once we have symmetrized all the terms we are left with
        \begin{align*} 
			&-I_{tot}^2 \left[ \left(\left( \frac{1}{(y-x)^2}\left(\frac{1}{(y-t)^{n-2}}-\frac{1}{(x-t)^{n-2}}\right)-\frac{n-2}{(x-y)(x-t)^{n-1}}\right)(\frac{q}{2}B(\gamma e_i,\gamma e_j) + C(\gamma e_i,\gamma e_i,\gamma e_j)) \right.\right.\\
            &-\frac{1}{y-x}\left(\frac{1}{(y-t)^{n-1}}-\frac{1}{(x-t)^{n-1}} \right)C(\gamma e_i,\gamma e_j,\beta) +\sum_{k=1}^{2N+M}\left(\frac{C(\alpha_k,\gamma e_i,\gamma e_j)}{(z_k-x)(z_k-y)(z_k-t)^{n-2}}\right.\nonumber\\
            &\left.\left.\left. + \frac{C(\alpha_k,\gamma e_i,\gamma e_j)+C(\alpha_k,\gamma e_j,\gamma e_i)}{(x-y)(x-z_k)(x-t)^{n-2}}\right) \right)\Psi_{i,j}(x,y)\right]\nonumber.
		\end{align*}
        Using symmetry between the integration variables allows one to reduce the latter to
        \begin{align*}
			 &-\It^2 \left[ \left(-\frac{(n-2)(\frac{q}{2}B(\gamma e_i,\gamma e_j) + C(\gamma e_i,\gamma e_i,\gamma e_j))-C(\gamma e_i,\gamma e_j,\beta)-C(\gamma e_j,\gamma e_i,\beta)}{(x-y)(x-t)^{n-1}} \right.\right.\\
            &-\frac{\frac{q}{2}(B(\gamma e_i,\gamma e_j)-B(\gamma e_j,\gamma e_i)) + C(\gamma e_i,\gamma e_i,\gamma e_j)- C(\gamma e_j,\gamma e_j,\gamma e_i)}{(y-x)^2(x-t)^{n-2}}\nonumber\\
            &\left.\left.\left. + \sum_{k=1}^{2N+M}\frac{C(\alpha_k,\gamma e_i,\gamma e_j)+C(\alpha_k,\gamma e_j,\gamma e_i)}{(x-y)(x-z_k)(x-t)^{n-2}}\right) \right)\Psi_{i,j}(x,y)\right]\nonumber\\
            &-I_{tot}^2\left[\sum_{k=1}^{2N+M} \frac{C(\alpha_k,\gamma e_i,\gamma e_j)}{(z_k-t)^{n-2}(x-z_k)(y-z_k)} \Psi_{i,j}(x,y)\right]. \nonumber
		\end{align*}
        The last line corresponds exactly to the two-fold integral $\tilde{I}_2$ in the r.h.s. of the Ward identity. For the other terms we use the identities (derived from Equation~\eqref{eq: identities I1})
        \begin{equation*}
            \begin{split}
            (n-2)(\frac{q}{2}B(\gamma e_i,\gamma e_j) + C(\gamma e_i,\gamma e_i,\gamma e_j))-C(\gamma e_i,\gamma e_j,\beta)-C(\gamma e_j,\gamma e_i,\beta)\\=2\gamma h_2(e_i)\delta_{i\neq j}\left((n-2)+\frac{\ps{\gamma e_i,\beta}}{2}\right)+h_2(e_i)((n-2)q+2\omega_{\hat{i}}(\beta))\frac{\ps{\gamma e_i,\gamma e_j}}{2};
        \end{split}
        \end{equation*}
        $$
        \frac{q}{2}(B(\gamma e_i,\gamma e_j)-B(\gamma e_j,\gamma e_i)) + C(\gamma e_i,\gamma e_i,\gamma e_j)- C(\gamma e_j,\gamma e_j,\gamma e_i) = 2\gamma h_2(e_i)\left(1+\frac{\ps{\gamma e_i,\gamma e_j}}{2}\right)\delta_{i\neq j};
        $$
        $$
        C(\alpha_k,\gamma e_i,\gamma e_j)+C(\alpha_k,\gamma e_j,\gamma e_i)= -2\gamma h_2(e_i) \frac{\ps{\gamma e_i,\alpha_k}}{2}\delta_{i\neq j}-\ps{\gamma e_i,\gamma e_j} h_2(e_i)\omega_{\hat{i}}(\alpha_k)
        $$
        to get that the difference $I_2-\tilde I_2$ is given by
        \begin{align*} 
			 -I_{tot}^2 &\left[ -2\gamma h_2(e_i)\delta_{i\neq j}\left(\frac{n-2+\frac{\ps{\gamma e_i,\beta}}{2}}{(x-y)(x-t)^{n-1}} +\frac{1+\frac{\ps{\gamma e_i,\gamma e_j}}{2}}{(x-y)^2(x-t)^{n-2}}\right.\right.\\
            & \left.+\sum_{k=1}^{2N+M}\frac{\ps{\gamma e_i,\alpha_k}}{2(x-y)(x-z_k)(x-t)^{n-2}}\right)\Psi_{i,j}(x,y)\\
            &\left.-\left(\frac{h_2(e_i)((n-2)q+2\omega_{\hat{i}}(\beta))\ps{\gamma e_i,\gamma e_j}}{2(x-y)(x-t)^{n-1}} +\sum_{k=1}^{2N+M}\frac{2 h_2(e_i)\omega_{\hat{i}}(\alpha_k)\ps{\gamma e_i,\gamma e_j}}{(x-y)(x-z_k)(x-t)^{n-2}}\right)\Psi_{i,j}(x,y)\right]. \nonumber
		\end{align*}
        The second line corresponds to the two-fold integrals that we obtained from the treatment of one-fold integrals in Equation~\eqref{eq: one fold int derivatives}. As for the first line we rely on the fact that
        \begin{align*}
            &\partial_x \left( \frac{1}{(x-t)^{n-2}(y-x)} \Psi_{i,j}(x,y)\right) =\\
            &\left( \frac{1+\frac{\ps{\gamma e_i,\gamma e_j}}{2}}{(x-t)^{n-2}(y-x)^2} + \frac{\frac{\ps{\gamma e_i,\gamma e_j}}{2}}{(x-t)^{n-2}(y-x)(\bar{y}-x)} + \frac{\frac{\ps{\gamma e_i,\gamma e_i}}{2}}{(x-t)^{n-2}(\bar{x}-x)(y-x)} + \frac{n-2+\frac{\ps{\gamma e_i,\beta}}{2}}{(x-t)^{n-1}(x-y)} \right.\nonumber\\
            &\nonumber\left.+ \sum_{k=1}^{2N+M} \frac{\ps{\gamma e_i,\alpha_k}}{2(x-t)^{n-2}(y-x)(z_k-x)}\right)\Psi_{i,j}(x,y)\\
            &-\It\left[\frac{\ps{\gamma e_i,\gamma e_f}}{2(x-t)^{n-2}(y-x)(z-x)}\Psi_{i,j,f}(x,y,z)\right] .\nonumber
        \end{align*}
        The integral term that shows up is canceled by the three-fold integrals in the left-hand side of~\eqref{eq:ward_Wn}. Namely these are given by
        \begin{align*}
			I_3 &= -I_{tot}^3 \left[ \sum_{p_1=1}^{n-2}\sum_{p_2=1}^{p_1} \frac{C(\gamma e_i,\gamma e_j,\gamma e_f)}{3(x-t)^{p_2}(y-t)^{p_1-p_2+1}(z-t)^{n-p_1-1}}\Psi_{i,j,f}(x,y,z)\right].
		\end{align*}
        Symmetrizing the above and using symmetry between the variables yield
        \begin{align*} 
			I_3 &= -I_{tot}^3 \left[\delta_{i\neq j}\frac{C(\gamma e_i,\gamma e_j,\gamma e_j)}{(x-t)^{n-2}(x-y)(x-z)}\Psi_{i,j,j}(x,y,z)\right].
		\end{align*}
        Again, algebraic computations show that 
        \begin{equation*}
            C(\gamma e_i,\gamma e_j,\gamma e_j) = -\gamma h_2(e_i) \ps{\gamma e_i,\gamma e_j} 
        \end{equation*}
        for $i\neq j$, which allows to conclude that 
        \begin{align} \label{eq: true remainder wn}
			&\ps{\Wb_{-n}V_\beta(t)\prod_{k=1}^NV_{\alpha_k}(z_k)\prod_{l=1}^MV_{\beta_l}(s_l)}_{\delta,\eps}-\\
			&\left(\sum_{k=1}^{2N+M}\frac{-\Wc_{-2}^{(k)}}{(z_k-t)^{n-2}}+\frac{(n-2)\Wc_{-1}^{(k)}}{(z_k-t)^{n-1}}-\frac{(n-1)(n-2)w(\alpha_k)}{2(z_k-t)^n}\right) \ps{V_\beta(t)\prod_{k=1}^NV_{\alpha_k}(z_k)\prod_{l=1}^MV_{\beta_l}(s_l)}_{\delta,\eps}\nonumber\\
            &=\It\left[\partial_x J^i(x)\right]+\It\times \It\left[\partial_x J^{i,j}(x,y)\right].\nonumber
		\end{align}
        where we have introduced the shorthands
        \begin{equation*}
            J^i(x) = \left(\frac{h_2(e_i)((n-2)q+2\omega_{\hat{i}}(\beta))}{(x-t)^{n-1}}+\sum_{k=1}^{2N+M}\frac{2h_2(e_i)\omega_{\hat{i}}(\alpha_k)}{(x-t)^{n-2}(x-z_k)}\right)\Psi_{i}(x)
        \end{equation*}
        and
        \begin{equation*}
            J^{i,j}(x,y) = \frac{2\gamma h_2(e_i)\delta_{i\neq j}}{(x-t)^{n-2}(y-x)}\Psi_{i,j}(x,y).
        \end{equation*}

        \emph{Concluding the proof.} In order to wrap up the proof of Theorem~\ref{thm:ward_Wn}, it only remains to show that
        \begin{align*}
            \lim\limits_{\delta,\eps\to0}\It\left[\partial_x J^i(x)\right]+\It\times \It\left[\partial_x J^{i,j}(x,y)\right]-\sum_{i=1}^2 \mathfrak{R}^i(\delta,\eps,\rho;\bm\alpha)=0.
        \end{align*}
        For this purpose we integrate by parts the right-hand side of Equation~\eqref{eq: true remainder wn}, that is, we apply Stokes formula separately over the domains $\Heps^1\cup\Reps^1$ and $\Heps^c\cup\Reps^c$. Let us first compute the integrals on $\Reps^c$. As already explained, we do not treat the terms at $\infty$ since they are seen to vanish thanks to Lemma~\ref{lemma:inf_integrability_toda}. Around an $s_l$ insertion, we have
        \begin{equation} \label{eq: ibp around sl}
            \begin{split}
                &\int_{\Reps^c\cap (s_l-r_l,s_l+r_l)}\partial_x J^i(x)\mu_i(dx)+\It\left[\int_{\Reps^c\cap (s_l-r_l,s_l+r_l)}\partial_x J^{i,j}(x,y)\mu_i(dx)\right]\\
                &=h_2(e_i)((n-2)q+2\omega_{\hat{i}}(\beta))\left(\mu_i(s_l^-)\frac{\Psi_i(s_l-\eps)}{(s_l-\eps-t)^{n-1}}-\mu_i(s_l^+)\frac{\Psi_i(s_l+\eps)}{(s_l+\eps-t)^{n-1}}\right)\\
                &+\sum_{k\neq l} 2h_2(e_i)\omega_{\hat{i}}(\alpha_k)\left(\mu_i(s_l^-)\frac{\Psi_i(s_l-\eps)}{(s_l-\eps-t)^{n-2}(s_l-\eps-z_k)}-\mu_i(s_l^+)\frac{\Psi_i(s_l+\eps)}{(s_l+\eps-t)^{n-2}(s_l+\eps-z_k)}\right)\\
                &+2h_2(e_i)\omega_{\hat{i}}(\beta_l)\left(\mu_i(s_l^-)\frac{\Psi_i(s_l-\eps)}{(s_l-\eps-t)^{n-2}(-\eps)}-\mu_i(s_l^+)\frac{\Psi_i(s_l+\eps)}{(s_l+\eps-t)^{n-2}\eps}\right)\\
                &+\It\left[2\gamma h_2(e_i)\delta_{i\neq j}\left(\mu_i(s_l^-)\frac{\Psi_i(s_l-\eps)}{(s_l-\eps-t)^{n-2}(y-s_l+\eps)}-\mu_i(s_l^+)\frac{\Psi_i(s_l+\eps)}{(s_l+\eps-t)^{n-2}(y-s_l-\eps)}\right)\right]\\
                &- \text{same terms with }r_l\text{ instead of }\eps.
            \end{split}
        \end{equation}
        The terms in the last line of the above expression cancel out when summing over $l$. Now, we recall that
        \begin{equation*}
            \tilde{\mathfrak{W}}_{-1,\delta,\eps,\rho}^i(\bm\alpha;\beta_l)= -h_2(e_i)\left(q-2\omega_{\hat i}(\beta_l)\right)\left(\mu_{i}(s_{l}^-)\Psi_i(s_l-\eps)-\mu_{i}(s_l^+)\Psi_i(s_l+\eps)\right)\qt{and}
        \end{equation*}
        \begin{align*}
            &\tilde{\mathfrak{W}}_{-2,\delta,\eps,\rho}^i(\bm\alpha;\beta_l)\\
            &= \left(-\sum_{k\neq l}\frac{2h_2(e_i)\omega_{\hat{i}}(\alpha_k)}{z_k-s_l}-\frac{2h_2(e_i)\omega_{\hat{i}}(\beta)}{t-s_l}\right)\left(\mu_{i}(s_l^-)\Psi_i(s_l-\eps)-\mu_{i}(s_l^+)\Psi_i(s_l+\eps)\right) \nonumber\\
            &-\frac{2h_2(e_i)\omega_{\hat{i}}(\beta_l)}{\eps}\left(\mu_{i}(s_l^-)\Psi_i(s_l-\eps)+\mu_{i}(s_l^+)\Psi_i(s_l+\eps)\right)\nonumber\\
            &-\mu_{B,i} \int_{(s_l-r_l,s_l+r_l)} \Im\left(\frac{2\gamma h_2(e_i)\omega_{\hat{i}}(\beta_l)}{x+i\delta -s_l}\right)\Psi_i(x+i\delta)dx\nonumber\\
            &+ \It\left[2\gamma h_2(e_i)\delta_{i\neq j}\left(\mu_{i}(s_l^-)\frac{\Psi_{i,j}(s_l-\eps,y)}{y-s_l+\eps}-\mu_{i}(s_l^+)\frac{\Psi_{i,j}(s_l+\eps,y)}{y-s_l-\eps}\right.\right.\nonumber\\
            &\left.\left.+\mu_{B,i} \int_{(s_l-r_l,s_l+r_l)} \left(\frac{1}{y-x-i\delta}-\frac{1}{y-x+i\delta}\right)\Psi_{i,j}(x+i\delta,y)\frac{idx}{2}\right)\right] \nonumber.
        \end{align*}
        Using these explicit expressions one readily sees that, up to a $o(\eps)$ term, Equation~\eqref{eq: ibp around sl} corresponds to the part that is on the real line (that is that does not contain a term of the form $x+i\delta$) of the expected remainder $$-\sum_{l=1}^M \frac{-\tilde{\mathfrak{W}}^i_{-2,\delta,\eps,\rho}(\bm\alpha;\beta_l)}{(s_l-t)^{n-2}}+\frac{(n-2)\tilde{\mathfrak{W}}^i_{-1,\delta,\eps,\rho}(\bm\alpha;\beta_l)}{(s_l-t)^{n-1}}\cdot$$
        This is true except for the term 
        $$
        \frac{2(n-2)h_2(e_i)\omega_{\hat i}(\beta_l)}{(s_l-t)^{n-1}}\left(\mu_{i}(s_{l}^-)\Psi_i(s_l-\eps)-\mu_{i}(s_l^+)\Psi_i(s_l+\eps)\right)
        $$
        coming from $-(n-2)\frac{\tilde{\mathfrak{W}}_{-1,\delta,\eps,\rho}^i(\bm\alpha;\beta_l)}{(s_l-t)^{n-1}}$. Nonetheless, one can recover this term by performing an expansion of the fourth line in Equation~\eqref{eq: ibp around sl}. Indeed, if one writes
        \begin{equation*}
            \frac{1}{(s_l+\eps-t)^{n-2}\eps} = \frac{1}{(s_l-t)^{n-2}\eps} - \frac{n-2}{(s_l-t)^{n-1}} + o(\eps)
        \end{equation*}
        and
        \begin{equation*}
            \frac{1}{(s_l-\eps-t)^{n-2}\eps} = \frac{1}{(s_l-t)^{n-2}\eps} + \frac{n-2}{(s_l-t)^{n-1}} + o(\eps).
        \end{equation*}
        Then replacing the fourth line in Equation~\eqref{eq: ibp around sl} using these identities yields the missing term.
        
        Let us now turn to the integrals over $\Heps^c$. When integrating by parts the right-hand side of \eqref{eq: true remainder wn} over $\Heps^c$ one obtains two kind of terms, see Figure~\ref{fig:integration around sl} (again, we omit the terms at infinity since they are easily shown to vanish thanks to Lemma~\ref{lemma:inf_integrability_toda}). The first ones are the contour integrals around the $z_k$'s (in blue on Figure~\ref{fig:integration around sl}), that are of the form
        \begin{equation*}
            \begin{split}
            \oint_{\partial B(z_k,\eps)} \Psi_i(\xi)(\tilde{J}^i(\xi)\frac{id\bar{\xi}}{2}-\tilde{J}^i(\bar{\xi})\frac{id\xi}{2})+\It\left[\oint_{\partial B(z_k,\eps)} \Psi_{i,j}(\xi,y)(\tilde{J}^{i,j}(\xi,y)\frac{id\bar{\xi}}{2}-\tilde{J}^{i,j}(\bar{\xi},y)\frac{id\xi}{2})\right].
            \end{split}
        \end{equation*}
        These terms actually tend to $0$ in the $\delta,\eps\to 0$ thanks to the fusion asymptotics. The other term is the integral over $(\R+i\delta \setminus (t-r+i\delta,t+r+i\delta)) \cup (\Heps^c \cap \partial B(t,r))$ (see Figure~\ref{fig:integration around sl}). To deal with it, one has to show that the parts of the integral on the segments $(s_l+r_l+i\delta,s_{l+1}-r_{l+1}+i\delta)$ vanishes in the limit, since the other parts are the singular terms 
        \begin{align*}
            &-\int_{(s_l-r_l,s_l+r_l)} \Im (\tilde{J}^i(x+i\delta)) \Psi_i(x+i\delta) dx \\
            &+ \It \left[\int_{(s_l-r_l,s_l+r_l)} (\tilde{J}^{i,j}(x+i\delta,y)-\tilde{J}^{i,j}(x-i\delta,y))\Psi_{i,j}(x+i\delta,y) \frac{idx}{2} \right]
        \end{align*}
        stemming from the definition of the descendant $\Wb_{-2}^{\beta_l}$ (shown in red on Figure~\ref{fig:integration around sl}). This is true like before thanks to fusion asymptotics. The term in green on Figure~\ref{fig:integration around sl} has already been treated above as it was part of the remainder terms.
        
        \begin{figure}[h!]
            \centering
            \begin{tikzpicture}
                \filldraw [fill=gray, fill opacity=0.1, draw=none] (-5,0.5) -- (5,0.5) -- (5,4) -- (-5,4) -- (-5,0.5);
                \filldraw [fill=white, fill opacity=1, draw=none] (-4.67705,0.5) -- (-1.32295,0.5) arc (16.6015:180-16.6015:1.75);
                \filldraw [fill=white, fill opacity=1, draw=none] (0,2.5) circle (0.5);
                \draw [dotted] (-6,0) -- (-5,0) (-0.5,0) -- (0.5,0) (5,0) -- (6,0) node [anchor=west] {$\mathbb{R}_\varepsilon$} (-6,0.5) -- (-5,0.5) (5,0.5) -- (6,0.5) node [anchor=west] {$\mathbb{R}+i\delta$};
                \draw (-5,0) -- (-3.5,0) (-2.5,0) -- (-0.5,0) (0.5,0) -- (2.5,0) (3.5,0) -- (5,0) (-5,0.5) -- (-4.67705,0.5) (-1.32295,0.5) -- (1.32295,0.5) (4.67705,0.5) -- (5,0.5);
                \draw [dashed] (-3.5,0) -- (-2.5,0) (2.5,0) -- (3.5,0);
                \foreach \Point/\PointLabel in {(-3,0)/t, (3,0)/s_l, (0,2.5)/z_k, (-4.75,0)/t-r, (-1.25,0)/t+r, (1.25,0)/s_l-r_l, (4.75,0)/s_l+r_l}
                \draw [fill=black] \Point circle (0.05) node [anchor=north] {$\PointLabel$};
                \draw [draw=blue](0,2.5) circle (0.5);
                \draw [draw=green, fill=green] (2.5,0) circle (0.05) (3.5,0) circle (0.05);
                \draw [<->] (-3,-0.5) -- (-2.5,-0.5) node [anchor=north east] {$\varepsilon$};
                \draw [<->] (3,-0.5) -- (2.5,-0.5) node [anchor=north west] {$\varepsilon$};
                \draw [<->] (0,3.25) -- (0.5,3.25) node [anchor=south east] {$\varepsilon$};
                \draw (-1.32295,0.5) arc (16.6015:180-16.6015:1.75);
                \draw [->,dashed] (4.67705,0.5) arc (16.6015:180-16.6015:1.75);
                \draw [draw=red] (1.32295,0.5) -- (4.67705,0.5);
                \draw [dashed] (-4.67705,0.5) arc (180-16.6015:180:1.75);
                \draw [dashed] (1.32295,0.5) arc (180-16.6015:180:1.75);
                \draw [dashed] (-1.25,0) arc (0:16.6015:1.75);
                \draw [dashed] (4.75,0) arc (0:16.6015:1.75);
            \end{tikzpicture}
            \caption{The contours of integration in $I_{reg}$}
            \label{fig:integration around sl}
        \end{figure}
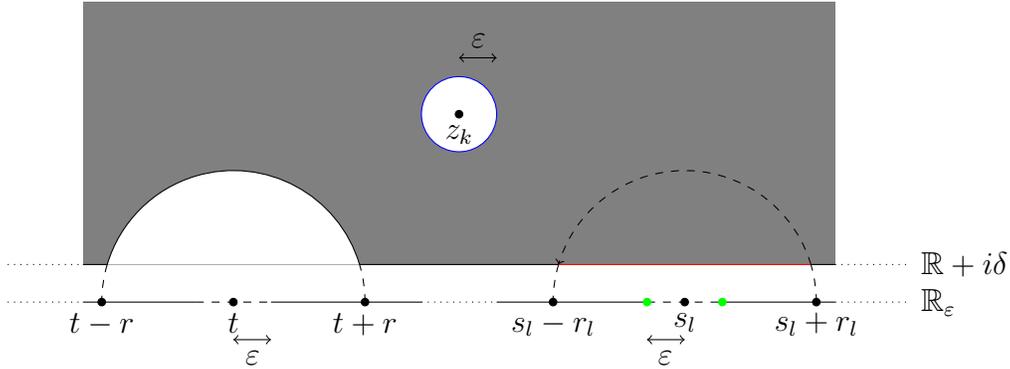
        
        At this point we have proven Lemma~\ref{lemma:desc_wn}. It remains to show that $\tilde{\mathfrak{W}}_{-n,\delta,\eps,\rho}^i(\bm\alpha)$ and $\mathfrak{W}_{-n,\delta,\eps,\rho}^i(\bm\alpha)$ differ by $(P)$-class terms in order to remove the dependence in $r$ and justify the Definition~\ref{defi: desc wn}. For this purpose we integrate by parts the integrals over $\Heps^1\cup\Reps^1$ (the associated domain of integration is depicted on Figure~\ref{fig:integration around t} with the remainder terms shown in red). 
        
        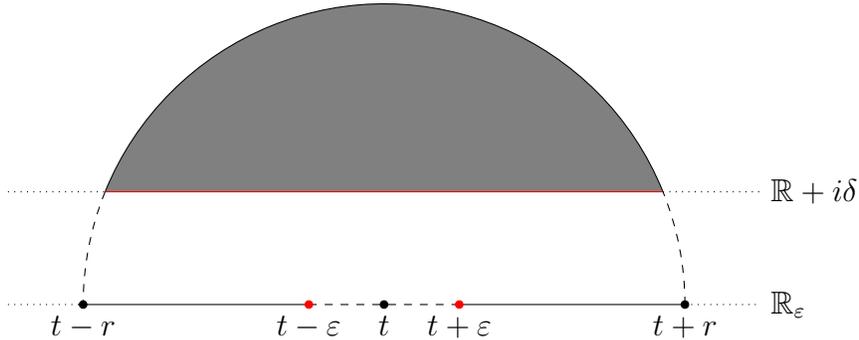
\begin{figure}[h!]
            \centering
            \begin{tikzpicture} 
                \draw (-4,0) -- (-1,0) (1,0) -- (4,0);
                \draw [dotted] (-5,0) -- (-4,0) (4,0) -- (5,0) node[anchor=west] {$\mathbb{R}_\varepsilon$};
                \draw [dashed] (-1,0) -- (1,0);
                \foreach \Point/\PointLabel in {(-4,0)/t-r, (0,0)/t, (4,0)/t+r} \draw [fill=black] \Point circle (0.05) node[anchor=north] {$\PointLabel$};
                \foreach \Point/\PointLabel in {(-1,0)/t-\varepsilon, (1,0)/t+\varepsilon} \draw [fill=red,red] \Point circle (0.05) node[anchor=north, black] {$\PointLabel$};
                \draw [dashed] (4,0) arc (0:22.0243:4) (-3.7081,1.5) arc (180-22.0243:180:4);
                \filldraw [fill=gray, fill opacity=0.1, draw=none] (-3.7081,1.5) -- (3.7081,1.5) arc (22.0243:180-22.0243:4);
                \draw  (3.7081,1.5) arc (22.0243:180-22.0243:4);
                \draw [red] (-3.7081,1.5) -- (3.7081,1.5);
                \draw [dotted] (-5,1.5) -- (-3.7081,1.5) ((3.7081,1.5) -- (5,1.5) node[anchor=west] {$\mathbb{R}+i\delta$};
            \end{tikzpicture}
            \caption{Integration around $t$}
            \label{fig:integration around t}
        \end{figure}
        
        We obtain
        \begin{equation} \label{eq: ibp around t}
            \begin{split}
            &\Is\left[\partial_x J^i(x)\right]+\Is\times \It\left[\partial_x J^{i,j}(x,y)\right]- \tilde{\mathfrak{W}}_{-n,\delta,\eps,\rho}^i(\bm\alpha)\\
            &=\mu_{R,i}J^i(t+r)-\mu_{L,i}J^i(t-r) + \mu_{B,i}\int_{\Heps\cap\partial B(t,r)} \Psi_i(\xi)(\tilde{J}^i(\xi)\frac{id\bar{\xi}}{2}-\tilde{J}^i(\bar{\xi})\frac{id \xi}{2})\\
            &+\It\left[\mu_{R,i}J^{i,j}(t+r,y)-\mu_{L,i}J^{i,j}(t-r,y)\right.\\
            &\left.+ \mu_{B,i}\int_{\Heps\cap\partial B(t,r)} \Psi_{i,j}(\xi,y)(\tilde{J}^{i,j}(\xi,y)\frac{id\bar{\xi}}{2}-\tilde{J}^{i,j}(\bar{\xi},y)\frac{id \xi}{2})\right].
            \end{split}
        \end{equation}
        Here $\tilde{\mathfrak{W}}_{-n,\delta,\eps,\rho}^i(\bm\alpha)$ is defined in Equation~\eqref{eq: remainder wn tilda}, and with $\tilde{J}$ denoting the function $J$ without the correlation function inside. We conclude thanks to the fusion asymptotics that all the terms in the right-hand side of the equality above are $(P)$-class, thus justifying Definition~\ref{defi: desc wn} and completing the proof of Theorem~\ref{thm:ward_Wn}.
	\end{proof}

\subsection{Global Ward identities} \label{subsec:ward_global}
    In the previous subsections we have provided rigorous definitions for the descendant fields, and proved that thanks to them it was possible to infer local Ward identities. As we will now see this will allow us to show that \textit{global} Ward identities are valid for our probabilistic model, putting strong constraints on the correlation functions. Such global Ward identities are obtained by inserting \textit{holomorphic tensors} within correlation functions, which can be seen as the $\L_{-2}V_\beta$ and $\Wb_{-3}V_\beta$ descendants where $\beta=0$. 
    
    To be more specific, let us assume that $\beta=0$ and denote respectively $\SET\coloneqq \L_{-2}V_0$ and $\Wb(t)\coloneqq \Wb_{-3}V_0$. Then we see that 
    \begin{equation*}
        \begin{split}
        &\SET[\Phi]=\ps{Q,\partial^2\Phi}-\ps{\partial\Phi,\partial\Phi}\qt{and}\\
        &\Wb[\Phi]=q^2h_2(\partial^3\Phi)-2qB(\partial^2\Phi,\partial,\Phi)-8h_1(\partial\Phi)h_2(\partial\Phi)h_3(\partial\Phi).
        \end{split}
    \end{equation*}
    They correspond respectively to the \textit{stress-energy tensor} $\SET$ and the \textit{higher-spin current} $\Wb$, already encountered in Subsection~\ref{subsec:SET}. In the language of Vertex Operator Algebra these currents can be thought of as formal power series whose modes generate the $W_3$ algebra.
    
    \subsubsection{Local Ward identities for the currents}
    The local Ward identities are obtained by inserting these currents within correlation functions. Then we see that, in agreement with Theorems~\ref{thm:ward_vir} and~\ref{thm:ward_Wn} we have the following:
    \begin{theorem}\label{thm:ward_local}
        Assume that the weights satisfy $\bm\alpha\in\mc A_{N,M}$ and take $t\in\R$. Then in the sense of weak derivatives 
		\begin{equation*}
			\begin{split}
				&\ps{\SET(t)\prod_{k=1}^NV_{\alpha_k}(z_k)\prod_{l=1}^MV_{\beta_l}(s_l)}=\left(\sum_{k=1}^{2N+M}\frac{\partial_{z_k}}{t-z_k}+\frac{\Delta_{\alpha_k}}{(t-z_k)^2}\right) \ps{\prod_{k=1}^NV_{\alpha_k}(z_k)\prod_{l=1}^MV_{\beta_l}(s_l)}.
			\end{split}
		\end{equation*}
        As for the higher-spin current we have:
        \begin{equation} \label{eq:ward_W}
			\begin{split}
				&\ps{\Wb(t)\prod_{k=1}^NV_{\alpha_k}(z_k)\prod_{l=1}^MV_{\beta_l}(s_l)}\\
				=&\left(\sum_{k=1}^{2N+M}\frac{\Wb_{-2}^{(k)}}{t-z_k}+\frac{\Wb_{-1}^{(k)}}{(t-z_k)^2}+\frac{w(\alpha_k)}{(t-z_k)^3}\right) \ps{\prod_{k=1}^NV_{\alpha_k}(z_k)\prod_{l=1}^MV_{\beta_l}(s_l)}
			\end{split}
		\end{equation}
    \end{theorem}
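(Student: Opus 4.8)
The plan is to recognize that the two currents appearing here are not new objects: by the very definition adopted in this subsection, $\SET = \L_{-2}V_0$ and $\Wb = \Wb_{-3}V_0$ are the descendant fields of the trivial weight $\beta = 0$. Consequently Theorem~\ref{thm:ward_local} is nothing but the specialization of the local Ward identities already established in Theorem~\ref{thm:ward_vir} (taken at $n = 2$) and Theorem~\ref{thm:ward_Wn} (taken at $n = 3$), evaluated at $\beta = 0$. My first task would therefore be to confirm that the descendant polynomials degenerate to the currents at $\alpha = 0$. Evaluating \eqref{eq:vir_desc} at $n=2$, $\alpha=0$ returns $\ps{Q,\partial^2\Phi}-\ps{\partial\Phi,\partial\Phi}$, which is the stress-energy tensor; evaluating \eqref{eq:W_desc} at $n=3$, $\alpha=0$ annihilates every term containing $\alpha$, leaving $q^2 h_2(\partial^3\Phi) - 2qB(\partial^2\Phi,\partial\Phi)$ together with the triple-$C$ contribution, which collapses via $C(u,u,u) = 3h_1(u)h_2(u)h_3(u)$ to $-8\,h_1(\partial\Phi)h_2(\partial\Phi)h_3(\partial\Phi)$, matching $\Wb$ exactly.

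The one genuine verification is admissibility: Theorems~\ref{thm:ward_vir} and~\ref{thm:ward_Wn} require $(\beta,\bm\alpha)\in\mc A_{N,M+1}$, and I must check that $(0,\bm\alpha)$ lies in this set whenever $\bm\alpha\in\mc A_{N,M}$. Since the additional boundary weight contributes $\beta/2 = 0$ to the vector $\bm{s}$, the balance bound $\ps{\bm{s},\omega_i} > 0$ for the $(N,M+1)$ configuration coincides with the one guaranteed by $\bm\alpha\in\mc A_{N,M}$, while the individual Seiberg bound for the new insertion reads $\ps{-Q,e_i} = -(\gamma+\tfrac{2}{\gamma}) < 0$ and is satisfied. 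Hence $(0,\bm\alpha)$ lies strictly inside $\mc A_{N,M+1}$. I would emphasize that although $\beta = 0$ sits on the boundary of the regime $\ps{\beta,e_i} < 0$ in which the remainder terms visibly vanish, this is harmless: the identities of Theorems~\ref{thm:ward_vir} and~\ref{thm:ward_Wn} were extended by analyticity to the whole of $\mc A_{N,M+1}$, so they apply at $\beta=0$ without any further argument.

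With admissibility settled the conclusion is pure transcription. Using $V_0(t)\equiv 1$ gives $\ps{V_0(t)\V} = \ps{\V}$, and rewriting $(z_k-t)^{-1} = -(t-z_k)^{-1}$ turns the $n=2$ instance of Theorem~\ref{thm:ward_vir} into the stated identity for $\SET$. For the higher-spin current, the $n=3$ instance of Theorem~\ref{thm:ward_Wn} carries the coefficient $(n-1)(n-2)/2 = 1$ on the $(z_k-t)^{-3}$ term, and the same sign flips produce the three terms $\Wc_{-2}^{(k)}/(t-z_k) + \Wc_{-1}^{(k)}/(t-z_k)^2 + w(\alpha_k)/(t-z_k)^3$, matching \eqref{eq:ward_W} under the identification $\Wb_{-j}^{(k)} = \Wc_{-j}^{(k)}$. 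As the parent identities hold in the sense of weak derivatives, so do these. The main obstacle is therefore only the admissibility check of the preceding paragraph; every other step is a direct reading of results already proven.
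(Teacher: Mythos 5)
Your proposal is correct and takes essentially the same route as the paper, which likewise obtains Theorem~\ref{thm:ward_local} by specializing Theorems~\ref{thm:ward_vir} and~\ref{thm:ward_Wn} to $\beta=0$ (with $n=2$ and $n=3$), using the identifications $\SET=\L_{-2}V_0$ and $\Wb=\Wb_{-3}V_0$ together with $V_0(t)\equiv 1$ and the sign flips $(z_k-t)^{-n}=(-1)^n(t-z_k)^{-n}$. Your two explicit verifications --- that the descendant polynomials \eqref{eq:vir_desc} and \eqref{eq:W_desc} degenerate to the currents at $\alpha=0$, and that $(0,\bm\alpha)\in\mc A_{N,M+1}$ whenever $\bm\alpha\in\mc A_{N,M}$ since $\ps{-Q,e_i}<0$ and $\bm{s}$ is unchanged --- are precisely the checks the paper leaves implicit.
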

    In the above the quantities that appear are defined by taking $\beta=0$ in the definition of the $\L_{-2}V_\beta$ and $\Wb_{-3}V_\beta$ descendants.

    \subsubsection{Global Ward identities}
    Based on the local Ward identities we will be able to derive global Ward identities. These are related to the symmetries enjoyed by the model in that they describe some conserved quantities, which translate as strong constraints put on the correlation functions.
    In order to show that such identities hold true we will rely on the conformal covariance of the correlation functions (Equation \eqref{eq:conf_cov}) together with the fact that the stress-energy tensor and the higher-spin current behave like covariant tensors. More specifically we first show that:
    \begin{proposition}\label{prop:cov_tensor}
        Let $\psi\in PSL(2,\R)$ be a M\"obius transform of the upper-half plane $\H$ and $\bm\alpha\in\mc A_{N,M}$. Then:
        \begin{equation*}
            \begin{split}
            &\ps{\SET(t)\prod_{k=1}^NV_{\alpha_k}(z_k)\prod_{l=1}^MV_{\beta_l}(s_l)}=\\
            &\psi'(t)^2\prod_{k=1}^N\norm{\psi'(z_k)}^{2\Delta_{\alpha_k}}\prod_{l=1}^M\norm{\psi'(s_l)}^{\Delta_{\beta_l}}\ps{\SET\left(\psi(t)\right)\prod_{k=1}^NV_{\alpha_k}(\psi (z_k))\prod_{l=1}^MV_{\beta_l}(\psi(s_l))}.
            \end{split}
        \end{equation*}
        for the stress-energy tensor, while for the higher-spin current:
        \begin{equation*}
            \begin{split}
            &\ps{\Wb(t)\prod_{k=1}^NV_{\alpha_k}(z_k)\prod_{l=1}^MV_{\beta_l}(s_l)}=\\
            &\psi'(t)^3\prod_{k=1}^N\norm{\psi'(z_k)}^{2\Delta_{\alpha_k}}\prod_{l=1}^M\norm{\psi'(s_l)}^{\Delta_{\beta_l}}\ps{\Wb\left(\psi(t)\right)\prod_{k=1}^NV_{\alpha_k}(\psi (z_k))\prod_{l=1}^MV_{\beta_l}(\psi(s_l))}.
            \end{split}
        \end{equation*}
    \end{proposition}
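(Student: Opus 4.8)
The plan is to deduce both identities from the conformal covariance \eqref{eq:conf_cov} of the correlation functions, applied to the functional defining the current. Taking $F[\Phi]=\SET(t)[\Phi]$ (resp. $F[\Phi]=\Wb(t)[\Phi]$) in \eqref{eq:conf_cov} and rearranging, the statement reduces to showing that the current functionals transform as covariant tensors under the field change $\Phi\mapsto\Phi\circ\psi+Q\ln\norm{\psi'}$, namely
\begin{equation*}
\SET(t)[\Phi\circ\psi+Q\ln\norm{\psi'}]=\psi'(t)^2\,\SET(\psi(t))[\Phi]+\mathcal E_{\SET},\quad \Wb(t)[\Phi\circ\psi+Q\ln\norm{\psi'}]=\psi'(t)^3\,\Wb(\psi(t))[\Phi]+\mathcal E_{\Wb},
\end{equation*}
where the anomalous terms $\mathcal E_{\SET},\mathcal E_{\Wb}$ vanish once inserted in correlation functions for $\psi\in PSL(2,\R)$. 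Indeed, substituting the tensor part into \eqref{eq:conf_cov} produces exactly the prefactor $\psi'(t)^2$ (resp. $\psi'(t)^3$) together with $\prod_k\norm{\psi'(z_k)}^{2\Delta_{\alpha_k}}\prod_l\norm{\psi'(s_l)}^{\Delta_{\beta_l}}$ claimed in the Proposition.

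First I would compute the transformation of the field functionals by the chain rule. Writing $u=\psi(t)$ and differentiating $\Phi\circ\psi+Q\ln\norm{\psi'}$ repeatedly, each derivative $\partial^j\big(\Phi\circ\psi+Q\ln\norm{\psi'}\big)(t)$ equals $\psi'(t)^j(\partial^j\Phi)(u)$ plus lower-order derivatives $(\partial^i\Phi)(u)$, $i<j$, weighted by polynomials in $\psi''(t),\psi'''(t),\dots$, together with the explicit $Q$-contributions coming from the shift $Q\ln\norm{\psi'}$. Substituting these into the expressions for $\SET$ and $\Wb$ recalled in Subsection~\ref{subsec:SET}, the top-degree contribution reproduces $\psi'(t)^2\SET(u)[\Phi]$ (resp. $\psi'(t)^3\Wb(u)[\Phi]$), while the remaining terms make up $\mathcal E_{\SET}$ and $\mathcal E_{\Wb}$: these consist of field-dependent pieces (lower derivatives of $\Phi$ contracted with $Q$ and with the tensors $B,C$, multiplied by ratios of derivatives of $\psi$) and of $c$-number pieces produced by the Wick-ordering self-contractions implicit in the regularized descendants.

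The heart of the argument is the vanishing of $\mathcal E_{\SET}$ and $\mathcal E_{\Wb}$ in correlation functions for Möbius maps. For $\psi\in PSL(2,\R)$ the Schwarzian derivative $\frac{\psi'''}{\psi'}-\frac32\big(\frac{\psi''}{\psi'}\big)^2$ vanishes identically, and so do all its differential consequences; this kills the $c$-number contributions, which are of Schwarzian type for $\SET$ and of the corresponding spin-$3$ type for $\Wb$. The field-dependent anomalies cancel by the standard mechanism turning the naive classical transformation into the quantum tensor law: the spurious terms linear and quadratic in $\partial\Phi$ coming from the chain rule are compensated by the variation of the self-contractions in the Wick products, the leftover being again Schwarzian-type and hence zero for Möbius maps. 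For $\SET$ this can be cross-checked directly through the local Ward identity of Theorem~\ref{thm:ward_local}, reducing the claim to a differential identity for $\sum_k\big(\frac{\partial_{z_k}}{t-z_k}+\frac{\Delta_{\alpha_k}}{(t-z_k)^2}\big)$ under $z_k\mapsto\psi(z_k)$, which follows from $\psi(t)-\psi(z)=\frac{t-z}{(ct+d)(cz+d)}$ and $\psi'(z)=(cz+d)^{-2}$.

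Finally, to make this rigorous one carries out the whole computation at the regularized level $(\delta,\eps,\rho)$, where \eqref{eq:conf_cov} and the chain rule are licit, and passes to the limit using the analyticity and $(P)$-class properties established above; one must check that the remainder terms $\tilde{\mathfrak L}^i_{-2}$ and $\tilde{\mathfrak W}^i_{-3}$ entering the definitions of $\SET=\L_{-2}V_0$ and $\Wb=\Wb_{-3}V_0$ transform consistently so that the Möbius action commutes with the limit. The main obstacle is exactly this anomaly bookkeeping: verifying that every $\psi$-derivative term in $\mathcal E_{\Wb}$ cancels using only the Möbius relations, given the intricate cubic combinatorics of $C$ and the cross terms between the Wick self-contractions and the explicit $Q\ln\norm{\psi'}$ shift, while simultaneously controlling the limit of the subtracted remainder terms.
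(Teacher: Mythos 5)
Your skeleton is the same as the paper's: reduce via the conformal covariance \eqref{eq:conf_cov} to a tensor transformation law for the current functionals, note that the anomalous (Schwarzian-type and spin-$3$) terms vanish for M\"obius maps, and carry everything out at the regularized level. The part you spend the most effort on — the chain-rule computation of $\mathcal E_{\SET}$, $\mathcal E_{\Wb}$ and their vanishing — is handled in the paper by a one-line citation: the exact identities $\SET[\Phi\circ\psi+Q\ln\norm{\psi'}]=(\psi')^2\,\SET[\Phi]\circ\psi$ and $\Wb[\Phi\circ\psi+Q\ln\norm{\psi'}]=(\psi')^3\,\Wb[\Phi]\circ\psi$ for $\psi\in PSL(2,\R)$ are imported from \cite[Proposition 4.1]{Toda_OPEWV}, so re-deriving them is legitimate but not where the content of this proposition lies.

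The genuine gap is precisely the step you defer to the last paragraph as ``the main obstacle'': showing that the subtracted remainder terms entering the definitions of $\SET=\L_{-2}V_0$ and $\Wb=\Wb_{-3}V_0$ transform covariantly, so that the subtraction commutes with the M\"obius action and the $\delta,\eps\to0$ limit. You name this check but give no mechanism for it, and without it the final limiting step is unjustified, since the correlation functions with currents are \emph{defined} only through that subtraction. The paper closes this by an explicit computation: in the remainder integrals
\begin{equation*}
\Is\left[\partial_x\left(\left(\frac{q h_2(e_i)}{(x-\psi(t))^{2}}+\sum_{k=1}^{2N+M}\frac{2h_2(e_i)\omega_{\hat{i}}(\alpha_k)}{(x-\psi(t))(x-\psi(z_k))}\right)\Psi_{i}(x)\right)\right]
+\Is\times \It\left[\partial_x\left(\frac{2\gamma h_2(e_i)\delta_{i\neq j}}{(x-\psi(t))(y-x)}\Psi_{i,j}(x,y)\right)\right]
\end{equation*}
one changes variables $x\leftrightarrow\psi(x)$, $y\leftrightarrow\psi(y)$, uses that $\Delta_{\gamma e_i}=1$ (so each $V_{\gamma e_i}$ insertion absorbs exactly the Jacobian of the change of variables via \eqref{eq:conf_cov}), and invokes the M\"obius identity
\begin{equation*}
(\psi(x)-\psi(t))(\psi(x)-\psi(y))=\psi'(x)\,\psi'(t)^{\frac12}\,\psi'(y)^{\frac12}\,(x-t)(x-y),
\end{equation*}
which shows the rational kernels are covariant of exactly the right weight. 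The upshot is that the remainder for the transformed correlation function equals $\psi'(t)^{-3}\prod_{k}\norm{\psi'(z_k)}^{-2\Delta_{\alpha_k}}\prod_{l}\norm{\psi'(s_l)}^{-\Delta_{\beta_l}}$ times the original remainder, after which the limit passes. Your cross-check of the $\SET$ case through Theorem~\ref{thm:ward_local} is sound (those identities are established independently beforehand), but it does not substitute for this verification in the $\Wb$ case, where the two-fold remainder integral is present; to complete your proof you would need to supply the change-of-variables argument above or an equivalent.
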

    \begin{proof}
        To start with we note that for such maps, in agreement with~\cite[Proposition 4.1]{Toda_OPEWV}, the currents satisfy
    \begin{equation*}
        \SET\left[\Phi\circ\psi+Q\ln\norm{\psi'}\right]=(\psi')^2\SET\left[\Phi\right]\circ\psi\qt{and}\Wb\left[\Phi\circ\psi+Q\ln\norm{\psi'}\right]=(\psi')^3\Wb\left[\Phi\right]\circ\psi.
    \end{equation*}
    As a consequence at the regularized level we have that 
    \begin{align*}
        &\psi'(t)^{3}\ps{\Wb\left(\psi(t)\right)\prod_{k=1}^NV_{\alpha_k}(\psi (z_k))\prod_{l=1}^MV_{\beta_l}(\psi(s_l))}_{\delta,\eps}\\ &=\ps{\Wb\left[\Phi\circ\psi+Q\ln\norm{\psi'}\right](t)\prod_{k=1}^NV_{\alpha_k}(\psi (z_k))\prod_{l=1}^MV_{\beta_l}(\psi(s_l))}_{\delta,\eps}.
    \end{align*}
    Let us now focus on the remainder terms that show up in the definition of
    $\ps{\Wb\left(\psi(t)\right)\V(\psi(\bm z))}$ (the case of the stress-energy tensor is treated in the exact same way): they are defined from
    \begin{align*}
        &\Is\left[\partial_x\left(\left(\frac{q h_2(e_i)}{(x-\psi(t))^{2}}+\sum_{k=1}^{2N+M}\frac{2h_2(e_i)\omega_{\hat{i}}(\alpha_k)}{(x-\psi(t))(x-\psi(z_k))}\right)\psi_{i}(x)\right)\right]\\
			&+\Is\times \It\left[\partial_x\left(\frac{2\gamma h_2(e_i)\delta_{i\neq j}}{(x-\psi(t))(y-x)}\psi_{i,j}(x,y)\right)\right].
    \end{align*}
    We can make the changes of variables $x\leftrightarrow\psi(x)$ and $y\leftrightarrow\psi(y)$ in the above integrals. We can then combine Equation~\eqref{eq:conf_cov} together with the fact that $\Delta_{\gamma e_i}=1$ and the following property of the Mobi\"us transform:
    \begin{equation*}
        (\psi(x)-\psi(t))(\psi(x)-\psi(y))=\psi'(x)\psi'(t)^{\frac12}\psi'(y)^{\frac12}(x-t)(x-y)
    \end{equation*}
    to get that the remainder term is actually given by $\psi'(t)^{-3}\prod_{k=1}^N\norm{\psi'(z_k)}^{-2\Delta_{\alpha_k}}\prod_{l=1}^M\norm{\psi'(s_l)}^{-\Delta_{\beta_l}}$ times the remainder term that would come from the definition of $\ps{\Wb(t)\V}$. This allows to take the $\delta,\eps\to0$ limit and conclude for the proof.
    \end{proof}
    
    Based on the above property we can readily deduce that global Ward identities hold true:
    \begin{theorem}\label{thm:ward_global}
		Assume that $\bm\alpha\in\mc A_{N,M}$. Then the following identities hold true for $0\leq n\leq 2$ and $0\leq m\leq 4$:
		\begin{equation*}
			\begin{split}
				&\left(\sum_{k=1}^{2N+M}z_k^n\Lc_{-1}^{(k)}+nz_k^{n-1}\Delta_{\alpha_k}\right)\ps{\prod_{k=1}^NV_{\alpha_k}(z_k)\prod_{l=1}^MV_{\beta_l}(s_l)}=0\\
				&\left(\sum_{k=1}^{2N+M}z_k^m\Wc_{-2}^{(k)}+mz_k^{m-1}\Wc_{-1}^{(k)}+\frac{m(m-1)}2 z_k^{m-2}w(\alpha_k)\right)\ps{\prod_{k=1}^NV_{\alpha_k}(z_k)\prod_{l=1}^MV_{\beta_l}(s_l)}=0.
			\end{split}
		\end{equation*}
	\end{theorem}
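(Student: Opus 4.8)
The plan is to deduce the global identities entirely from the \emph{local} Ward identities of Theorem~\ref{thm:ward_local} together with the tensorial covariance of the currents established in Proposition~\ref{prop:cov_tensor}. The underlying mechanism is the classical one: the local identities exhibit $\ps{\SET(t)\V}$ and $\ps{\Wb(t)\V}$ as explicit rational functions of $t$, while covariance forces these rational functions to decay at $t\to\infty$ faster than their naive order; the vanishing of the intermediate Laurent coefficients is exactly the list of global identities, indexed by $0\leq n\leq 2$ and $0\leq m\leq 4$.

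First I would fix the inversion $\psi(z)=-1/z$, which lies in $PSL(2,\R)$, preserves $\H$, and satisfies $\psi'(z)=1/z^2$ and $\psi(\infty)=0$. Applying Proposition~\ref{prop:cov_tensor} to this $\psi$ yields, for the stress-energy tensor,
\begin{equation*}
\ps{\SET(t)\V}=t^{-4}\prod_{k=1}^N\norm{\psi'(z_k)}^{2\Delta_{\alpha_k}}\prod_{l=1}^M\norm{\psi'(s_l)}^{\Delta_{\beta_l}}\Big\langle\SET(\psi(t))\prod_{k=1}^NV_{\alpha_k}(\psi(z_k))\prod_{l=1}^MV_{\beta_l}(\psi(s_l))\Big\rangle,
\end{equation*}
and the same relation with $t^{-4}$ replaced by $t^{-6}$ for $\Wb$ (since $\psi'(t)^2=t^{-4}$ and $\psi'(t)^3=t^{-6}$ for real $t$). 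Choosing the configuration so that $0$ is a regular point, the correlators on the right-hand side are continuous as $\psi(t)\to 0$, hence bounded as $t\to\infty$. This forces $\ps{\SET(t)\V}=O(t^{-4})$ and $\ps{\Wb(t)\V}=O(t^{-6})$ as $t\to\infty$.

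Next I would Laurent-expand the rational right-hand sides of Theorem~\ref{thm:ward_local} around $t=\infty$, using $\frac{1}{(t-z_k)^p}=\sum_{j\geq 0}\binom{j+p-1}{p-1}z_k^{j}\,t^{-(j+p)}$ and the identification $\Lc_{-1}^{(k)}=\partial_{z_k}$ from Proposition~\ref{prop:L1_der}. A direct computation of the coefficient of $t^{-(n+1)}$ gives $\sum_k\big(z_k^{n}\Lc_{-1}^{(k)}+nz_k^{n-1}\Delta_{\alpha_k}\big)\ps{\V}$ in the stress-energy case, and the coefficient of $t^{-(m+1)}$ gives $\sum_k\big(z_k^{m}\Wc_{-2}^{(k)}+mz_k^{m-1}\Wc_{-1}^{(k)}+\tfrac{m(m-1)}{2}z_k^{m-2}w(\alpha_k)\big)\ps{\V}$ in the higher-spin case, where the binomial coefficients $\binom{m}{0},\binom{m}{1},\binom{m}{2}$ produce precisely the three prefactors. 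Matching against the decay rates of the previous step, the coefficients of $t^{-1},t^{-2},t^{-3}$ (respectively $t^{-1},\dots,t^{-5}$) must vanish, which is exactly the claimed family of identities for $0\leq n\leq 2$ (respectively $0\leq m\leq 4$).

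The main point to secure is the passage from the weak-derivative formulation of Theorem~\ref{thm:ward_local} to a genuine pointwise Laurent expansion in $t$. This is not a serious difficulty: the right-hand sides are rational in $t$ with coefficients that are finite correlators and descendants on $\R\setminus\{z_k,s_l\}$, so both sides coincide as real-analytic functions of $t$ there, and the equality of distributions upgrades to a pointwise equality of continuous functions away from insertions. One then only needs that a neighbourhood of $\infty$ contains no insertion point, which holds after the inversion, and the analyticity in $\bm\alpha$ already established guarantees that the resulting polynomial-in-$z_k$ relations extend to all of $\mc A_{N,M}$. The genuine input is the covariance of Proposition~\ref{prop:cov_tensor}, which supplies the decay order; the remainder is bookkeeping of binomial coefficients.
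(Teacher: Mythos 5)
Your proposal is correct and follows essentially the same route as the paper's own proof: apply Proposition~\ref{prop:cov_tensor} with the inversion $\psi(z)=-1/z$ to force the decay $\ps{\SET(t)\V}=O(t^{-4})$ and $\ps{\Wb(t)\V}=O(t^{-6})$, then expand the rational right-hand sides of Theorem~\ref{thm:ward_local} in negative powers of $t$ and read off the vanishing of the coefficients of $t^{-1},\dots,t^{-3}$ (resp.\ $t^{-1},\dots,t^{-5}$). Your additional care about upgrading the weak-derivative identity to a pointwise Laurent expansion away from the insertions, and about $0$ being a regular point after the inversion, is sound bookkeeping that the paper leaves implicit.
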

    \begin{proof}
        This is an immediate consequence of Proposition~\ref{prop:cov_tensor} combined with Theorem~\ref{thm:ward_local}. Namely by taking $\psi(z)=\frac{-1}{z}$ we see that as $t\to\infty$:
			\begin{align*}
				&\left(\sum_{k=1}^{2N+M}\frac{\Wb_{-2}^{(k)}}{t-z_k}+\frac{\Wb_{-1}^{(k)}}{(t-z_k)^2}+\frac{w(\alpha_k)}{(t-z_k)^3}\right) \ps{\prod_{k=1}^NV_{\alpha_k}(z_k)\prod_{l=1}^MV_{\beta_l}(s_l)}\\
                &=t^{-6}\prod_{k=1}^{2N+M}\norm{z_k}^{-2\Delta_{\alpha_k}}\ps{\Wb\left(\frac 1t\right)\prod_{k=1}^{2N+M}V_{\alpha_k}\left(\frac{-1}{z_k}\right)}.
			\end{align*}
			From this we deduce that the left-hand side must scale like $C t^{-6}$ for some constant $C$. But we can expand it in negative powers of $t$: the coefficients that appear in this expansion are precisely given by the ones that appear in the statement of the global Ward identities, namely $\left(\sum_{k=1}^{2N+M}z_k^m\Wc_{-2}^{(k)}+mz_k^{m-1}\Wc_{-1}^{(k)}+\frac{m(m-1)}2 z_k^{m-2}w(\alpha_k)\right)\ps{\prod_{k=1}^NV_{\alpha_k}(z_k)\prod_{l=1}^MV_{\beta_l}(s_l)}$. This is the end, of our elaborate plans, the end.
    \end{proof}

	\bibliography{main}
	\bibliographystyle{plain}
	
\end{document}